\def\section{\@startsection{section}{1}%
\z@{.7\linespacing\@plus\linespacing}{.5\linespacing}%
{\bfseries
\centering
}}
\def\@secnumfont{\bfseries}
\crefname{enumi}{}{}
\crefname{equation}{}{}
\colorlet{savedcolor}{.} \color{blue} \begingroup \ttfamily      \noindent \underline{Supplementary details:} \newline \newline \footnotesize }{\endgroup   \color{savedcolor}}
\newcommand{\EX}{\mathbb{E}}
\newcommand{\N}{\mathbb{N}}
\newcommand{\Z}{\mathbb{Z}}
\newcommand{\E}{\mathbb{E}}
\newtheorem{theorem}{Theorem}[section]
\newtheorem{lemma}[theorem]{Lemma}
\newtheorem{proposition}[theorem]{Proposition}
\newtheorem{corollary}[theorem]{Corollary}
\theoremstyle{definition}
\newtheorem{definition}[theorem]{Definition}
\theoremstyle{remark}
\newtheorem{remark}[theorem]{Remark}
\newtheorem{example}[theorem]{Example}
\numberwithin{equation}{section}
\newcommand{\eps}{\varepsilon}
\newcommand{\R}{\mathbb{R}}
\newcommand{\T}{\mathbb{T}}
\newcommand{\grad}{\nabla}
\newcommand{\norm}[1]{\ensuremath{\Vert #1 \Vert}} 
\newcommand{\abs}[1]{\ensuremath{\vert #1 \vert}}  
\DeclareMathOperator{\dist}{dist}
\DeclareMathOperator*{\osc}{osc}
\renewcommand{\a}{\mathbf{a}}
\begin{document}

\title[Renormalized stochastic pressure equation]{Renormalized stochastic pressure equation with log-correlated Gaussian coefficients}

\author[Avelin]{Benny Avelin}
\address{Uppsala University, Department of Mathematics, 751 06 Uppsala, Sweden}
\email{benny.avelin@math.uu.se}

\author[Kuusi]{Tuomo Kuusi}
\address{University of Helsinki, Department of Mathematics and Statistics, PO Box 68 (Pietari Kalmin katu 5), 00014 University of Helsinki, Finland}
\email{tuomo.kuusi@helsinki.fi}

\author[Nummi]{Patrik Nummi}
\address{Aalto University School of Business,
Department of Information and Service Management, P.O. Box 21210 (Ekonominaukio 1, Espoo), 00076 Aalto, Finland}
\email{patrik.nummi@aalto.fi}

\author[Saksman]{Eero Saksman}
\address{University of Helsinki, Department of Mathematics and Statistics, PO Box 68 (Pietari Kalmin katu 5), 00014 University of Helsinki, Finland}

\email{eero.saksman@helsinki.fi}

\author[T\"olle]{Jonas M. T\"olle}
\address{Aalto University School of Science, Department of Mathematics and Systems Analysis, PO Box 11100, 00076 Aalto, Finland}
\email{jonas.tolle@aalto.fi}

\author[Viitasaari]{Lauri Viitasaari}
\address{Aalto University School of Business,
Department of Information and Service Management, P.O. Box 21210 (Ekonominaukio 1, Espoo), 00076 Aalto, Finland}
\email{lauri.viitasaari@aalto.fi}

\thanks{BA acknowledge support by the Swedish Research Council [dnr: 2019-04098]. TK and PN acknowledge support by the Academy of Finland and the European Research Council (ERC) under the European Union's Horizon 2020 research and innovation programme (grant agreements no. 323099 and no. 818437). JMT acknowledges support by the Academy of Finland and the European Research Council (ERC) under the European Union's Horizon 2020 research and innovation programme (grant agreements no. 741487 and no. 818437). TK and ES acknowledge support by the Academy of Finland CoE FiRST (no. 1346305)}
\date{\today}
\begin{abstract}
	We study periodic solutions to the following divergence-form stochastic partial differential equation with Wick-renormalized gradient on the $d$-dimensional flat torus $\T^d$,
	\[
		-\nabla\cdot\left(e^{\diamond (-  \beta X) }\diamond\nabla U\right)=\nabla \cdot (e^{\diamond (-  \beta X)} \diamond \mathbf{F}),
	\]
 where $X$ is the log-correlated Gaussian field,  $\mathbf{F}$ is a random vector \textcolor{black}{field representing the flux, the in/out-flow of fluid per unit area per unit time,} and $\diamond$ denotes the Wick product.
 	The problem is a variant of the stochastic pressure equation, in which $U$ is modeling the pressure of a creeping water-flow in crustal rock that occurs in enhanced geothermal heating. 
	In the original model, the Wick exponential term
	$e^{\diamond(-\beta X)}$ is modeling the random permeability of the rock.
	The porosity field is given by a log-correlated Gaussian random field
	$\beta X$, where $\beta<\sqrt{d}$. We use elliptic regularity theory in order to define a notion of a solution to this (a priori very ill-posed) problem, via modifying the $S$-transform from Gaussian white noise analysis, and then establish the existence and uniqueness of solutions. Moreover, we show that the solution to the problem can be expressed in terms of the Gaussian multiplicative chaos measure.
\end{abstract}

\subjclass[2020]{Primary: 35J15, 60H15; Secondary: 35B65, 60H40, 76S05, 76U60, 86-10}
\keywords{elliptic stochastic partial differential equation with random coefficients; log-correlated Gaussian random field; elliptic regularity theory with weights; Wick renormalization; $S$-transform; high pressure creeping water flow in porous media; Gaussian multiplicative chaos.}

\maketitle

{\small
\tableofcontents
}

\section{Introduction}
The stochastic pressure equation on a domain $V\subset\R^d$ is given by the following stochastic partial differential equation (SPDE)
\begin{equation}\label{eq:general}\begin{cases}&-\nabla\cdot\left(e^{\xi} \times \nabla U\right)= \nabla \cdot (e^{\xi} \times \textcolor{black}{\mathbf{F}}),\\
&\text{boundary conditions on $\partial V$},\end{cases}\end{equation}
where $\xi$ is a realization of a Gaussian random field on $V$. The product ``$\times$'' is usually given by some algebraic renormalization, where many authors have considered the Wick-product ``$\diamond$'', and set $e^\xi=e^{\diamond W}$, the Wick exponential of white noise $W$, see~\cite{Holden_et_al} and the references therein.

We are studying the question of existence and uniqueness of solutions to the following divergence-form SPDE

\begin{equation}\label{eq:mainmain}
    -\nabla\cdot\left(e^{\diamond (-  \beta X) }\diamond\nabla U\right)=\nabla\cdot\left(e^{\diamond (-  \beta X) }\diamond \mathbf{F}\right),
\end{equation}
on the $d$-dimensional flat torus $\mathbb{T}^d$ with dimension $d\geq 2$. 
Here $0<\beta<\sqrt{d}$ is a parameter, and $X$ is realization of a log-correlated Gaussian random field on $\T^d$ \textcolor{black}{arising from a fractional Gaussian free field, see \eqref{eq:field-definition} for precise definition}, and $\mathbf{F}$ is a suitable random vector \textcolor{black}{field that is also generated by the underlying Gaussian source of randomness}. This continues our work \cite{AKNSTV-1} in which we study the one-dimensional case. 

The log-correlated Gaussian field\textcolor{black}{s} have a covariance function with a specific logarithmic-type singularity on the diagonal:
\begin{equation}
	\E \left[X(y)X(z)\right] = \log \frac{1}{|y-z|_{\T^d}} + g(y,z),
\end{equation}
where $g$ is typically assumed to be smooth, and $|\cdot|_{\T^d}$ denotes the distance in the torus. It is clear that $X$ cannot be a function (but is close to being one, see \cite{junnila2020}), and in fact \textcolor{black}{such} $X$ can be obtained as a special case of the so-called fractional Gaussian fields\textcolor{black}{, formally given by $X = (-\Delta)^{\alpha}W$, where $\Delta$ is the Laplacian, and $W$ denotes the (real) white noise. Then, one recovers the log-correlated Gaussian field with $\alpha = -d/4$}, see for instance \cite{SheffieldSurvey}. The inclusion of the log-correlated field in the problem is motivated by geophysical modeling, which we discuss in detail in Subsection~\ref{subsec:geo} below.


In general, we do not expect weak solutions to~\eqref{eq:mainmain} to have sufficient regularity to compensate for the lack of regularity of the random factor, namely the log-correlated Gaussian field. The known fact that there is no canonical way to define the product of two distributions, is tackled by ``renormalizing'' the equation. This process can be viewed a local reparametrization leading to an implicit dependence of the solution to the specific choice of smooth approximations. In recent years, this method has been applied systematically in the context of so-called singular SPDEs in~\cite{Bruned_et_al, BHZ, Hairer, Kupiainen, Gubinelli_et_al, OttoWeber}. 

In \eqref{eq:mainmain} we work, albeit indirectly, with the Wick product, which most resembles the Itô stochastic integral; see \cite{HuOe1996} for details. As already observed by \cite[pp. 141--191]{Holden_et_al} in the white noise case, the choice of the Wick product ensures that the stochastic pressure equation remains unbiased, given deterministic data. This property aligns with the local martingale property of the Itô integral in the one-dimensional case. The Wick product is well-known in particle physics from Wick's theorem \cite{Wick}, which can be reformulated via Gaussian integration by parts \cite{Janson1997}. This reformation aids in deriving the conformal Ward identities in Liouville quantum field theory \cite{KRV, Rhodes:2016hin}. An earlier appearance of the Wick product can be found in Isserlis' theorem concerning higher-order moments of the multivariate Gaussian normal distribution \cite{I:18}.

\textcolor{black}{While intimately connected, Wick's theorem and the general Wick product consider different concepts. Renormalization through the Wick product naturally leads to the so-called $S$-transform, which mimics a variant of the Laplace transform with random test-elements. Indeed, $S$-transform on Wick products satisfy
$$
\left(S\left(X \diamond Y\right)\right)(h) = (SX)(h)(SY)(h)
$$
for test-elements $h$, and thus Wick products behave as ordinary products under $S$-transform. Moreover, $S$-transforms characterize random variables uniquely, see, e.g., \cite{Janson1997} in the case of Gaussian $L^2(\Omega)$-setting.
For an introduction to the connections between the Wick product, Wiener chaos, and the $S$-transform in the classical context of Gaussian linear spaces, see also the survey \cite{Hu2009Wick}. These tools of stochastic renormalization can be extended beyond the $L^p$-setting into so-called Hida and Kondratiev stochastic distribution spaces; we refer to \cite{kondratiev1996,hida2013white} and the references therein for more details.}

\textcolor{black}{In the present article,} we define (a variant of) the $S$-transform at test points $X_\beta(z)$ in the $L^2_\beta(\Omega)$ space of random variables of form 
\begin{align*}
    \int_{\T^d} \varphi(z) d\mu_\beta(z), \quad \varphi \in C^\infty(\T^d),
\end{align*}
where $d\mu_\beta(z)$ denotes the Gaussian multiplicative chaos measure (\textcolor{black}{For details on the Gaussian multiplicative chaos measure, see \cite{Rhodes-Vargas-2014, rhodes2016} and Section \ref{subsec:GMC-preliminaries} below}) with parameter $\beta$. Applying the $S$-transform to equation~\eqref{eq:mainmain} results in a family of weighted, divergence-form, deterministic PDEs, parameterized by $z\in \T^d$:
\begin{equation}
\label{eq:deterministic-eq-intro}
		\begin{cases}
			- \nabla \cdot (w(\cdot; z) \nabla u(\cdot; z)) = \nabla \cdot (w(\cdot; z) \mathbf{f}(\cdot;z)), & \text{ in $\T^d$,} \\
			y \mapsto u(y;z) \textnormal{ is } \T^d \textnormal{-periodic for all }z \in \T^d
		\end{cases}
\end{equation}
for the weight $w(y;z):=|y-z|^{\beta^2}e^{-\beta^2 g(y,z)}$, which can be tackled by well-known methods of degenerate elliptic PDEs. \textcolor{black}{Here $u(\cdot,z)$ corresponds to the $S$-transform at $X_\beta(z)$ of the stochastic solution $U(y)$ to \cref{eq:mainmain}, and $\mathbf{f}(\cdot;z)$ corresponds to the $S$-transform at $X_\beta(z)$ of $\mathbf{F}$ in \cref{eq:mainmain}.} We show that these deterministic equations admit unique solutions $u(\cdot;z)$, which have suitable regularity in the $z$ variable, and this in turn allows the inversion of the $S$-transform, provided that the $S$-transform of $\mathbf{F}$ has sufficient regularity. \textcolor{black}{The regularity of the solution to \eqref{eq:deterministic-eq-intro} in the $z$ variable is studied in \cref{sec:regularity}. We emphasize that in our context, we study the regularity in the parametrization variable $z$ which is in contrast with classical regularity theory for degenerate PDEs. Our strategy to obtain sufficient regularity is to shift the pole whenever $x$ is far from a given $z$. We construct a corrector expansion with respect to this shifting that allows us to derive sufficient estimates in the $L^\infty$ norm. These then imply sufficient regularity in $z$, see \cref{sec:regularity} for details.} This leads to our main result on the existence, uniqueness and representation of \textcolor{black}{stochastic} solutions to \eqref{eq:mainmain}, which is outlined below:



\begin{theorem}\label{thm:main}
	Let $X_\beta$ be a log-correlated Gaussian field with scaling parameter $0 < \beta < \sqrt{d}$, and let $\mathbf{F}$ be a random vector with sufficient regularity on its $S$-transform.  Then the problem~\eqref{eq:mainmain} admits a solution (in the sense of Definition \ref{def:pde-solution}). In particular, the solution admits a representation given by 
 \begin{align}\label{eq:mainthm}
		U(y) = \int_{\mathbb{T}^d} \varphi(y;z) d\mu_\beta(z), \qquad y \in \mathbb{T}^d,
	\end{align}
 where $\varphi$ is deterministic, and where $d\mu_\beta$ denotes the Gaussian multiplicative chaos measure with parameter $\beta$.

\end{theorem}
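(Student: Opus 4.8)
The strategy is to make rigorous the $S$-transform scheme sketched in the introduction, reducing the ill-posed SPDE~\eqref{eq:mainmain} to the deterministic weighted family~\eqref{eq:deterministic-eq-intro} and then inverting. First I would fix, for each $z\in\T^d$, the weight $w(\,\cdot\,;z)=|y-z|^{\beta^2}e^{-\beta^2 g(y,z)}$ and observe that, since $0<\beta<\sqrt d$, one has $\beta^2<d$, so $w(\,\cdot\,;z)$ is a Muckenhoupt $A_2$ weight on $\T^d$ with constant uniform in $z$; this is the key quantitative input. Applying the Lax--Milgram theorem in the weighted Sobolev space $H^1_w(\T^d)$ (mod constants), I obtain for each $z$ a unique mean-zero weak solution $u(\,\cdot\,;z)$ to~\eqref{eq:deterministic-eq-intro}, with the energy bound $\|\nabla u(\,\cdot\,;z)\|_{L^2_w}\le\|\mathbf f(\,\cdot\,;z)\|_{L^2_w}$. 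This step should be routine given the degenerate-elliptic theory of Fabes--Kenig--Serapioni and its periodic adaptation.

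Next I would establish the regularity of $z\mapsto u(\,\cdot\,;z)$ needed to invert the $S$-transform. Because $\mathbf F$ is assumed to have sufficiently regular $S$-transform, the data $z\mapsto\mathbf f(\,\cdot\,;z)$ depends smoothly (in the appropriate topology) on $z$, and the weight $w(y;z)$ is smooth in $z$ away from $y=z$ with controlled blow-up; differentiating the weak formulation in $z$ and using the uniform $A_2$ bounds propagates this regularity to $u$. The output is a deterministic field $\varphi(y;z)$ — essentially $\varphi=u$ up to the normalization built into the modified $S$-transform — lying in a class for which the inversion theorem (the characterization of $L^2_\beta(\Omega)$ elements by their $S$-transform, as recalled from Gaussian analysis à la Janson) applies. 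I would then define $U(y):=\int_{\T^d}\varphi(y;z)\,d\mu_\beta(z)$ and verify it lies in the solution space of Definition~\ref{def:pde-solution}.

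Finally I would check that this $U$ actually solves~\eqref{eq:mainmain} in the prescribed sense: apply the modified $S$-transform to the equation, use bilinearity of the Wick product and the fact that the $S$-transform sends $e^{\diamond(-\beta X)}\diamond(\,\cdot\,)$ to multiplication by $w(\,\cdot\,;z)$, and observe that the resulting identity is exactly~\eqref{eq:deterministic-eq-intro}, which holds by construction; uniqueness follows from injectivity of the $S$-transform together with the uniqueness in the deterministic problem. The representation~\eqref{eq:mainthm} is then immediate from the definition of $U$.

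The main obstacle I anticipate is the interchange of the singular integral against $d\mu_\beta$ with the (weak) differential operator in~\eqref{eq:mainmain}: one must show that $\nabla\cdot(e^{\diamond(-\beta X)}\diamond\nabla U)$ makes sense and commutes with $\int_{\T^d}(\,\cdot\,)\,d\mu_\beta(z)$, which requires precise moment bounds on the GMC measure $\mu_\beta$ combined with the uniform-in-$z$ weighted energy estimates, and careful control of the joint singularity of $\varphi(y;z)$ as $y\to z$ against the near-diagonal behavior of $\mu_\beta$. Getting the two scaling exponents ($\beta^2$ from the weight, versus the dimension of $\mu_\beta$) to cooperate — precisely the place where the hypothesis $\beta<\sqrt d$ is used — is where the real work lies.
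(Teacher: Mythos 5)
Your overall architecture matches the paper's: $S$-transform the SPDE to the deterministic weighted family, solve by FKS degenerate-elliptic theory, establish $z$-regularity, and invert via the bijection $G_{\beta^2}\colon W^{-s_\beta,2}(\T^d)\to W^{s_\beta,2}(\T^d)$. The existence step and the inversion step are exactly as you describe.

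However, there is a genuine gap in your step establishing regularity of $z\mapsto u(\,\cdot\,;z)$, and it is the crux of the whole paper. You propose to ``differentiate the weak formulation in $z$ and use the uniform $A_2$ bounds to propagate this regularity to $u$.'' The paper explicitly flags this as the naive approach that fails: differentiating in $z$ brings $\partial_{z_i}w(y;z)\sim |y-z|^{\beta^2-1}$ into the right-hand side, and this is \emph{not} in $L^2(\T^d;w_z)$, so the resulting equation has no finite-energy interpretation and the uniform $A_2$ bound does not save you. The paper's actual mechanism (Section 4, Theorem 4.1) is entirely different: one first conjugates the equation by a diffeomorphism $\Phi_{z^\ast}(x)=x+\eta(x-z^\ast)h$ that \emph{shifts the pole} so that the weight becomes translation-invariant near the singularity; one then performs a corrector expansion in $h$, with each corrector $v^{(\alpha)}$ solving a new weighted equation whose data vanishes in a ball around $z^\ast$; and one obtains pointwise $L^\infty$ bounds on the correctors via weighted Green function estimates (Lemmas 4.4--4.5) and a H\"older-decay iteration (Lemmas 4.2--4.3), finally patching the odd-$d$ case by Brezis--Mironescu interpolation. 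This machinery gives $u(y;\cdot)\in W^{d/2,2}(\T^d)$ uniformly in $y$, which then embeds into $W^{s_\beta,2}(\T^d)$ since $s_\beta=(d-\beta^2)/2<d/2$. None of this is routine, and it cannot be replaced by formal $z$-differentiation.

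You also slightly misplace the ``main obstacle.'' You anticipate difficulty in commuting $\nabla\cdot(e^{\diamond(-\beta X)}\diamond\nabla U)$ with $\int_{\T^d}(\,\cdot\,)\,d\mu_\beta(z)$ and in controlling the joint $y\to z$ singularity against $\mu_\beta$. In the paper there is no such interchange to justify: Definition 3.2 \emph{defines} a solution entirely on the $S$-transform side, so once $u(y;\cdot)\in W^{s_\beta,2}(\T^d)$ uniformly in $y$ is established, the inversion $\varphi(y;\cdot)=G_{\beta^2}^{-1}u(y;\cdot)\in W^{-s_\beta,2}(\T^d)$ and the $L^2_\beta(\Omega)$ characterization (Corollary 2.9) deliver the representation $U(y)=\int_{\T^d}\varphi(y;z)\,d\mu_\beta(z)$ as an $L^2(\Omega)$-limit automatically, with no near-diagonal analysis of the GMC measure required. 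The hypothesis $\beta<\sqrt d$ enters twice — to make $w_z$ an $A_2$ weight and to place $s_\beta$ below $d/2$ — but the hard work it unlocks is the weighted $z$-regularity, not the commutation you worry about.
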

For the precise statement and the proof of the main result, see Theorem \ref{thm:main-result} in Section \ref{sec:mainresult} below.

\textcolor{black}{The above result states that the problem~\eqref{eq:mainmain} with Wick product admits a stochastic solution that can be viewed, for each fixed $y$, as a well-defined square integrable random variable $U(y)$. Our key observation is that if the probabilistic setup is appropriately understood, one can understand the solution through a deterministic PDE~\eqref{eq:deterministic-eq-intro}. Then our main contribution is to show sufficient regularity for the solution to~\eqref{eq:deterministic-eq-intro}, and one does not need many probabilistic tools in order to obtain the solution. We note however that, as pointed out above, Wick products and $S$-transforms can be defined more generally in stochastic distribution spaces. In this sense, one could approach the problem~\eqref{eq:mainmain} from a more probabilistic point of view, as is done in the univariate case \cite{AKNSTV-1}. 
Another natural possibility is to regularize the field $X$, e.g., via mollification and consider \cref{eq:mainmain} with the regularized field $X^{\varepsilon}$. This approach is also considered in the univariate case \cite{AKNSTV-1} in which one can compute the solution explicitly. However, in multivariate case one cannot write down the solution with explicit formula, making it difficult to assess what would happen in the limit. This is an interesting direction of future research. Other possible future directions may include the analysis of rough paths inspired expansions \cite{FH:20}, more elaborate methods as regularity structures \cite{Hairer}, or more direct approaches by limits of regularizations. Taking this viewpoint, the Wick product may be perceived as the first order term in a more general formal expansion. We point out, however, that by an easy power counting argument, the parabolic counterpart to \eqref{eq:general} is seen to be supercritical for $d\ge 2$.
The renormalization issue thus remains delicate. From the physical point of view, as discussed in the following subsection, the Dirichlet problem on a bounded domain certainly could be an interesting topic of investigation.
}


\subsection{Relation to the physical model}\label{subsec:geo}

Motivated by the St1 Deep Heat project in Espoo~\cite{Kukkonen_2021}, we are interested in modeling the high pressure creeping water-flow in crustal rock that occurs in enhanced geothermal heating (EGH). The EGH system consists of two 7-km-deep boreholes into the bedrock of the Nordic countries. Water is pushed into one hole and creeps through the bedrock into the second hole, heated by the geothermal heat in the process, such that the hot water can be extracted for energy. Understanding the fluid flow is essential for understanding the heat extraction.

The modeling of fluid flow in rock has been widely studied, mainly due to the oil industry, for instance in the stimulation of oil wells. For porous rock types the homogenized problem describes the flow fairly well~\cite{L:90,FW:92,LW:94}.
It has long been known that the creeping water-flow in porous media can be well described by Darcy's law, given as
\begin{align*}
	\frac{\kappa(x)}{\nu} \grad P  = v(x).
\end{align*}
Here~$v(x)$ describes the flow velocity, $P$ is the pressure, $\kappa$ is the permeability and $\nu$ the (constant) viscosity of water. From preservation of ``mass/energy'' we get that the pressure $P$ of the creeping flow satisfies the following parabolic equation
\begin{align*}
	\frac{\partial P}{\partial t}-\nabla \cdot (\nu^{-1} \kappa(x)\grad P(x,t) ) = f\textcolor{black}{,}
\end{align*}
where \textcolor{black}{$f$ is the source rate of the fluid.} As such, in our case we are interested in the steady state this system, i.e.~where $\partial P / \partial t = 0$, specifically
\begin{align} \label{eq:pressure-intro}
	-\nabla \cdot (\nu^{-1} \kappa(x)\grad P(x,\cdot) ) = f.
\end{align}

In engineering applications, we cannot measure the porosity/permeability of the rock other than locally around the borehole. Hence the uncertainty about the surrounding rock is modeled as a random field. It has been observed from the borehole measurements that the spatial correlation of the crustal crack distribution follows a power-law scaling that is called $1/k$ scaling. This was actually first noted by Hewett et al.~\cite{H} for the variances of borehole logs of oil and gas reservoir formations. Todoeschuk et at.~\cite{T} further noted that the sequence of seismic reflection coefficients computed from a well log had a power law spatial frequency spectrum. Leary~\cite{L90,L91}, on the other hand, found the power law scaling spectra for deep borehole spatial fluctuation sequences in sonic velocity and resistivity values closely related to fracture content in crustal granite. Recently, the power-law scaling has also been observed in wellbore porosity logs, see Leary et al.~\cite{leary2020physical} and the references therein. While the borehole measurements only contains information in one direction (depth), we can view them as a 1d projection of the 3d field modeling the rock porosity.

The permeability is observed to be log-normally distributed and is related to the  exponential of porosity (see~\cite{leary2020physical} and the references therein). Thus, a natural model for permeability would be $\kappa = e^{\alpha \gamma + \beta \phi}$ for some given parameters $\alpha,\beta,\gamma > 0$, where $\gamma$ is mean porosity and $\phi$ is normally distributed. In order to have the correct $1/k$ scaling properties suggested by empirical evidence, a natural choice (in dimension $d=3$) is to consider $\phi \sim \text{FGF}_{3/2}(\mathbb{R}^3)$ with $\text{FGF}$ standing for fractional Gaussian field that corresponds to the log-correlated Gaussian field. However, this is not really well defined (we need to consider a suitably renormalized exponential). Furthermore, $\alpha \gamma + \beta\phi$ cannot, strictly speaking, be interpreted as porosity which can only take values between 0 and 1. However, since the permeability can be arbitrarily close to zero (solid rock), this seems to be a plausible model for permeability. In our approach we consider arbitrary dimension $d\geq 2$ (the case $d=1$ already being treated in \cite{AKNSTV-1}), and we need a bound $\beta < \sqrt{d}$.
Notice carefully, that we assume above that $\phi$ has zero mean, so in practice the choice of $\beta$ scales the size of fluctuations of porosity, not the absolute size. For example, in~\cite{leary2020physical}, $\alpha \gamma$ is observed to be of size $3$ to $4$, and in our analysis this is simply a multiplicative constant for the equation and it plays no significant role. 

On a related literature, let us comment that a discussion about mathematical modeling of the permeability with the exponential of a Gaussian field would not be complete without mentioning the works of Holden et al., see~\cite{Holden_et_al} and the references therein. They use the exponential of smoothed white noise as the permeability. In comparison to our approach, the unfortunate side-effect of using smoothed white noise is that there is a finite correlation length which violates the correlation length of cm to km scale from~\cite{leary2020physical}. 

\subsection{Notation}
\label{subsec:notation}
Throughout the article we use the following notation. Let $f\in L^1(\T^d)$. Then the Fourier coefficients of $f$ are given by, for $ n\in \Z^d$, 
$$
\widehat{f}(n) = \int_{\T^d} f(x)e^{-ix\cdot n}dx.
$$
\begin{definition}
\label{def:frac-sobolev-general}
    Let $s \in \R$, then $W^{s,2}(\T^d)$ is defined as
    \begin{equation*}
        W^{s,2}(\T^d) = \{f \in S'(\T^d) \textnormal{ s.t. } \Vert f\Vert_{W^{s,2}(\T^d)}^2 =  \sum_{n\in \Z^d} \langle n \rangle^{2s}|\hat{f}(n)|^2 < \infty\},
    \end{equation*}
    where $S'(\T^d)$ is the space of tempered distributions on $\T^d$, and $\hat{f}(n)$ denotes the Fourier coefficients of $f$, and $\langle n\rangle^2 = 1+|n|^2 = 1+\sum_{k=1}^d n_k^2$.
\end{definition}
We also need to define a suitable distance on the torus $\T^d$. For this purpose, we define $\varphi: \R^d \mapsto \R$ as a $2\pi$-periodic function in each variable such that $\varphi(u)=|u|$ when $|u|<1/3$ and $\varphi(u)$ is positive and $C^\infty$-smooth in the set $\R^d\setminus (2\pi\Z)^d$. Then $\varphi$ yields function $\widetilde{\varphi}$ on $\T^d$ satisfying 
$$
\widetilde{\varphi}(x+(2\pi\Z)^d) = \varphi(x),\quad x\in \R^d.
$$
Then we may define the \textcolor{black}{''distance''} between two points $x,y\in\T^d$
by\begin{equation}
\label{eq:torus-distance-decomposition}
d_{\T^d}(x,y) := |x-y|_{\T^d} := \widetilde{\varphi}(x-y).
\end{equation}
\textcolor{black}{We note that this does not automatically yield a proper distance for any such $\varphi$. However, it yields a distance on small scales which is sufficient for our purposes, as on small scales it behaves exactly as Euclidean distance. Moreover, it is smooth outside the diagonal. }

In the sequel, we will simply write $\Vert f \Vert_{s,2}$ instead of $\Vert f\Vert_{W^{s,2}(\T^d)}$ whenever confusion cannot arise. We also note that the topological dual of $W^{s,2}(\T^d)$ (in the standard distributional duality sense) is simply $W^{-s,2}(\T^d)$. 

For a weight $w$ and a domain $\Omega \subset \T^d$, we also use the notation $W^{m,p}(\Omega;w)$, where $1\leq p< \infty$, for the weighted Sobolev space defined as the completion of  $C^\infty(\Omega)$ under the norm 
\begin{align}
\label{eq:weighted-Sobolev}
	\|u\|_{W^{m,p}(\Omega;w)}
	:= \|u\|_{L^p(\Omega;w)} + \sum_{1 \leq |\alpha| \leq m} \|\partial_x^\alpha u\|_{L^p(\Omega;w)},
\end{align}
where $\alpha$ is a multi-index and where
\begin{align*}
	\|u\|_{L^p(\Omega;w)} := \left ( \int_{\Omega} |u(x)|^p w(x) dx\right )^{\frac{1}{p}}.
\end{align*}
We will often use the shorthand notation $d\textcolor{black}{\nu}_z = w(x) dx$, and we will, as customary, denote $H^1(\Omega;w) := W^{1,2}(\Omega;w)$.



\subsection{The plan}
The rest of the article is organized as follows. In Section \ref{sec:log-correlated} we discuss basic facts on log-correlated fields and Gaussian multiplicative measures, cf. Section \ref{subsec:GMC-preliminaries}, and we define the $S$-transform on log-correlated fields and study its basic properties, cf. Section \ref{subsec:S-transform}. In Section \ref{sec:mainresult} we motivate and describe precisely what we mean by a solution to the problem \eqref{eq:mainmain}. We also formulate and prove our main result, by using regularity estimates for the deterministic equation \eqref{eq:deterministic-eq-intro}. Section \ref{sec:regularity} is devoted to the required regularity estimates, and we end the paper with a short appendix on basic facts about Muckenhoupt weights needed in Section \ref{sec:regularity}.
\section{Log-correlated fields and the $S$-transform}
\label{sec:log-correlated}
\label{sec:probability}
\subsection{Basic facts on log-correlated fields and Gaussian multiplicative chaos measures}
\label{subsec:GMC-preliminaries}
In this section we recall some basic facts on log-correlated Gaussian fields and Gaussian multiplicative chaos (GMC) measures in the $d$-torus. For details on GMC measures in general, we refer to survey articles~\cite{Duplantier2017},~\cite{Rhodes-Vargas-2014} and references therein. For log-correlated fields in the $d$-torus, see also~\cite{Oh2020}.

\begin{definition}[Log-correlated Gaussian field] \label{def:log-correlated-field}
A distribution-valued centered Gaussian field $X$ in a $d$-torus $\mathbb{T}^d$ is called  \emph{a log-correlated Gaussian field} if it has covariance
\begin{equation}
	\label{eq:general-cov}
	R(y,z):= R_X(y,z) := \E \left[X(y)X(z)\right] = \log \frac{1}{|y-z|_{\T^d}} + g(y,z),
\end{equation}
where we assume that $g \in C^\infty(\mathbb{T}^d \times\mathbb{T}^d)$ and $|\cdot|_{\T^d}$ denotes the distance in the torus. 
\end{definition}
Log-correlated fields on $\T^d$ exist, and a natural way to realize such a field is through the series 
\begin{equation}
\label{eq:field-definition}
X(y) =\sum_{n\in \Z^d} \frac{A_n}{\langle n\rangle^{d/2}}e^{in\cdot y},
\end{equation}
where $A_n$ are mutually independent complex standard Gaussian random variables, and convergence is in the sense of distributions. Recall that above we denote $\langle n \rangle = (1+n^2)^{1/2}$. 
Such a field has covariance (see e.g.~\cite{Oh2020})
$$
R(y-z) = \textnormal{Cov}(X(y),X(z))=-\log |y-z|_{\T^d} + g(y-z),
$$
where $ g\in C^\infty(\T^d)$. Throughout, we consider the exact field given by~\eqref{eq:field-definition}. \textcolor{black}{We note that our results could be extended to cover the case of more general log-correlated fields with covariance given by \eqref{eq:general-cov}, provided that $g(y,z) = g(y-z)$ with suitable assumptions on $g$. However, this would require more detailed analysis on the Fourier coefficients, while in the case when the field is given by \eqref{eq:field-definition}, the Fourier series can be computed more explicitly, see, e.g., proof of Lemma \ref{le:operator-coefficients}}.
\begin{remark}
 Since the covariance is unbounded, the point-wise evaluations $X_\beta(z)$ are not well-defined, and the precise interpretation for the covariance is that it yields  the kernel of the covariance operator:
\begin{align*}
	\E \left[\langle X,\psi_1\rangle \langle X,\psi_2\rangle\right]\; =\; \int_{\mathbb{T}^d\times \mathbb{T}^d}R(y,z)\psi_1(y)\psi_2(z) \, dz \, dy
\end{align*}
for any $\psi_1,\psi_2\in C^\infty (\mathbb{T}^d)$.
\end{remark}
In the sequel, we will also consider a scaling parameter $\beta > 0$, and a log-correlated Gaussian field with scaling parameter $\beta$, $\beta X$, written simply as $X_\beta$. The GMC measure $d\mu_\beta$, depending on a parameter $0<\beta < \sqrt{2d}$, is a random measure constructed from the log-correlated field $X$ formally\footnote{To construct the measure precisely, one can approximate the field, e.g. through mollification and show that the corresponding sequence of random measures converge in probability in the space of Radon measures. \textcolor{black}{Furthermore, under rather mild conditions on the mollifying function, the limiting measure of the sequence of measures does not depend on the mollifier. See \cite{berestycki2017, rhodes2016}.}} as the measure 
\begin{align}\label{eq:chaosmeasure}
    d\mu_\beta(y) = e^{\diamond X_\beta(y)}dy,
\end{align}
where 
\begin{align} \label{eq:wickexp}
	e^{\diamond X_\beta(y)} := \exp\left(X_\beta(y) - \frac{\beta^2}{2}\E \left[X(y)^2\right]\right)
\end{align}
denotes the Wick-exponential. Note that again here the representation is only formal, as we have $\E \left[X(y)^2\right] = \infty$ and the point-wise evaluations $X_\beta(y)$ do not exists. 

In the present paper we shall restrict ourselves to the so-called $L^2$-range, where $\beta <\sqrt{d},$ and this also will be our standing assumption from now on. In this case, by using the notation $\mu_\beta(\psi):=\int_{\mathbb{T}^d}\psi(z)d\mu_\beta(z)$ we obtain for any $\psi\in L^\infty (\mathbb{T}^d)$ \textcolor{black}{and any $\beta, \beta'$ such that $\beta
,\beta' < \sqrt{d}$} that \textcolor{black}{
\begin{align*}
    \mathbb{E}\left[\mu_\beta(\psi)\mu_{\beta'}(\psi)\right]&= \mathbb{E}\left[\int_{\mathbb{T}^d} \psi(y)e^{\diamond X_\beta(y)} \, dy \int_{\mathbb{T}^d} \psi(z)e^{\diamond X_{\beta'}(z)} \, dz\right] \\
    &=\Vert \psi\Vert^2_\infty \int_{\mathbb{T}^d\times\mathbb{T}^d} \mathbb{E} \left[ e^{\diamond X_\beta(y)} e^{\diamond X_{\beta'}(z)} \right] \, dy \, dz \\
    &=
    \Vert \psi\Vert^2_\infty \int_{\mathbb{T}^d\times\mathbb{T}^d} e^{\beta \beta' \EX[X_\beta(y) X_{\beta'}(z)]} \, dy \, dz \\
    &=
    \Vert \psi\Vert^2_\infty \int_{\mathbb{T}^d\times\mathbb{T}^d} e^{\beta \beta' R(y,z)} \,dy \,dz \\
    &=
    \Vert \psi\Vert^2_\infty \int_{\mathbb{T}^d\times\mathbb{T}^d} \abs{y-z}^{-\beta \beta'} e^{\beta \beta' g(y-z)} \,dy \,dz.
\end{align*}
}

which is finite as \textcolor{black}{$\beta \beta' < d$. In particular, this shows that $\mu_\beta(\psi)$ is square-integrable whenever $\beta^2 < d$.}
Note that have used the fact that, for any centered Gaussian random variables $H_1$ and $H_2$, we have
\begin{align*}
	\E \left[e^{\diamond H_1} e^{\diamond H_2}\right] = e^{\E (H_1H_2)},
\end{align*}
and we performed the computation only formally, but as mentioned before, it can be validated by first replacing $X$ by the approximated field $X^{\textcolor{black}{\varepsilon}}$ and letting then $\varepsilon\to 0^+$ (this is essentially as in \cite[Theorem 1.1.]{berestycki2017}).

\subsection{$S$-transforms and log-correlated fields}\label{subsec:Wick}
\label{subsec:S-transform}
In this section we briefly introduce subspaces $L^2_\beta(\Omega)$ that are characterized by (extensions of) $S$-transforms at field points $X_\beta(z)$. We follow the ideas introduced in~\cite{AKNSTV-1} and extend them to the torus $\T^d$ case.
The subspace $L^2_\beta(\Omega)$ is given by the following definition.
\begin{definition}[$L_\beta^2(\Omega)$ space generated by GMC $d\mu_\beta$]\label{def:L2-GMC}
    Let $\beta\in(0,\sqrt{d})$. The space $L_{\beta}^2(\Omega) := L^2(\Omega, \mathbb{P},\mathcal{F}_{d\mu_{\beta}})$ generated by the GMC measure $d\mu_\beta$ is the closed linear space spanned by random variables of the form
    \begin{equation}
        \label{eq:basic-RV}
        \int_{\T^d} \varphi(z) d\mu_\beta(z), \quad \varphi \in C^\infty(\T^d).
    \end{equation}
   \end{definition}
In order to characterize the random variables $Z\in L_\beta^2(\Omega)$ and define the $S$-transform at $X_\beta(s)$, we introduce the operator acting on sufficiently regular functions $\phi:\T^d \to \R$
\begin{align}\label{eq:operator-G}
    G\phi(z) = G_{\alpha}\phi(z): & =\int_{\T^d}\exp (\alpha\EX [X(z)X(y)])\phi(y)\, dy \; \nonumber \\
                                  & =
    \int_{\T^d}|z-y|_{\T^d}^{-\alpha}e^{\alpha g(z-y)}\phi(y) dy,
\end{align}
where $\alpha\in (0,d)$ and $X$ is a log-correlated Gaussian field as in~\cref{def:log-correlated-field}. The motivation for the operator stems from the fact that for random variables 
$Z_1 =\int_{\mathbb{T}^d}\varphi_1(z)d\mu_\beta(z)$ and $Z_2 = \int_{\mathbb{T}^d} \varphi_2(z) d\mu_\beta(z)$, we can (again formally) compute the covariance and get
\begin{equation*}
	\begin{split}
		\E [Z_1 Z_2] &= \E \left[\int_{\mathbb{T}^d} \varphi_1(z)d\mu_\beta(z)\int_{\mathbb{T}^d} \varphi_2(z)d\mu_\beta(z)\right] \\
		&=\E \left[\int_{\mathbb{T}^d}\int_{\mathbb{T}^d} \varphi_1(z)\varphi_2(y)e^{\diamond X_\beta(z)}e^{\diamond X_\beta(y)} dy dz\right] \\
		&= \int_{\mathbb{T}^d}\int_{\mathbb{T}^d}\varphi_1(z)\varphi_2(y) e^{\beta^2 R(z,y)} dy dz \\
		&= \int_{\mathbb{T}^d}\varphi_1(z)G_{\beta^2}\varphi_2(z) dz\\
		&= \langle \varphi_1, G_{\beta^2}\varphi_2\rangle_{L^2(\mathbb{T}^d)}.
	\end{split}
\end{equation*}
As in~\cite{AKNSTV-1}, our characterization is based on the bijectivity properties of the operator $G_\alpha$ that we study next. We begin with the following simple lemma that eventually provides the required decay of the Fourier coefficients.
\begin{lemma}
\label{lem:Fourier-decay}
For $\alpha\in(0,d)$ and a symmetric function $\phi \in C_0^\infty(\R^d)$ with $\phi(0)>0$, let $h: \R^d \to \R$ be given by $h(x) = \phi(x)|x|^{-\alpha}$. Define the Fourier transform of $h$ by 
$$
\widehat{h}(\xi) = \int_{\R^d}h(x)e^{-i\xi \cdot x}dx.
$$
Then there exists constants $c$, $C$, and $K$ such that, for all $|\xi|\geq K$, we have 
$$
c|\xi|^{\alpha-d} \leq \widehat{h}(\xi)\leq C|\xi|^{\alpha-d}.
$$
\end{lemma}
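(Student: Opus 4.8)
The plan is to compute the Fourier transform of the homogeneous singular kernel $|x|^{-\alpha}$ (tempered distribution on $\R^d$, $0<\alpha<d$), for which it is classical that $\widehat{|x|^{-\alpha}} = c_{d,\alpha}|\xi|^{\alpha - d}$ with a positive constant $c_{d,\alpha} = \pi^{d/2}2^{d-\alpha}\Gamma((d-\alpha)/2)/\Gamma(\alpha/2)$. Writing $h = \phi \cdot |x|^{-\alpha}$, so that $\widehat h = (2\pi)^{-d}\,\widehat\phi * \widehat{|x|^{-\alpha}} = c_{d,\alpha}(2\pi)^{-d}\,\widehat\phi * |\cdot|^{\alpha-d}$, the main term should be $c_{d,\alpha}\phi(0)|\xi|^{\alpha-d}$ up to lower-order corrections, since convolving the smooth rapidly-decaying $\widehat\phi$ (mass $(2\pi)^d\phi(0)$) against the slowly-varying $|\cdot|^{\alpha-d}$ essentially reproduces $|\xi|^{\alpha-d}$ near infinity. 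Because $\phi(0)>0$ and $c_{d,\alpha}>0$, this gives both the upper and lower bounds.

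The key steps, in order, are as follows. First, record the distributional identity $\widehat{|x|^{-\alpha}}(\xi) = c_{d,\alpha}|\xi|^{\alpha-d}$ and observe $\widehat h = (2\pi)^{-d}\widehat\phi * \widehat{|\cdot|^{-\alpha}}$, where $\widehat\phi\in\mathcal S(\R^d)$ (as $\phi\in C_0^\infty$), so the convolution is a genuine absolutely convergent integral for each $\xi$:
\[
\widehat h(\xi) = \frac{c_{d,\alpha}}{(2\pi)^d}\int_{\R^d}\widehat\phi(\eta)\,|\xi-\eta|^{\alpha-d}\,d\eta.
\]
Second, for $|\xi|\geq K$ split the integral into $|\eta|\leq |\xi|/2$ and $|\eta| > |\xi|/2$. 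On the far region, $|\widehat\phi(\eta)|$ decays faster than any polynomial, so that piece is $O(|\xi|^{-N})$ for every $N$, hence negligible compared to $|\xi|^{\alpha-d}$. Third, on the near region write $|\xi - \eta|^{\alpha-d} = |\xi|^{\alpha-d}\bigl(1 + O(|\eta|/|\xi|)\bigr)$ uniformly (the function $t\mapsto |e - t|^{\alpha-d}$ is smooth near $t=0$ for a unit vector $e$, with bounded derivative on $|t|\leq 1/2$); integrating against $\widehat\phi$ and using $\int_{\R^d}\widehat\phi = (2\pi)^d\phi(0)$ and $\int |\eta||\widehat\phi(\eta)|\,d\eta < \infty$ gives
\[
\widehat h(\xi) = c_{d,\alpha}\phi(0)\,|\xi|^{\alpha-d} + O\bigl(|\xi|^{\alpha - d - 1}\bigr).
\]
Since $c_{d,\alpha}\phi(0)>0$, choosing $K$ large enough that the error term is, say, at most half the main term yields constants $c = \tfrac12 c_{d,\alpha}\phi(0)$ and $C = \tfrac32 c_{d,\alpha}\phi(0)$ with $c|\xi|^{\alpha-d}\le \widehat h(\xi)\le C|\xi|^{\alpha-d}$ for $|\xi|\ge K$.

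The main obstacle is the rigorous justification of the convolution formula $\widehat{\phi \cdot |\cdot|^{-\alpha}} = (2\pi)^{-d}\widehat\phi * \widehat{|\cdot|^{-\alpha}}$ at the level of tempered distributions, since $|x|^{-\alpha}$ is not integrable at the origin and its transform is another singular homogeneous distribution — one must check that the product with $\widehat\phi \in \mathcal S$ pairs correctly and that the resulting convolution integral converges (it does, because near $\eta = \xi$ the singularity $|\xi-\eta|^{\alpha-d}$ is locally integrable in dimension $d$ and $\widehat\phi$ is bounded). An alternative, perhaps cleaner, route that sidesteps distributional subtleties is to use the subordination identity $|x|^{-\alpha} = c\int_0^\infty t^{\alpha/2 - 1} e^{-t|x|^2}\,dt$, insert it into $\widehat h(\xi) = \int h(x)e^{-i\xi\cdot x}\,dx$, interchange (justified by Fubini once $\phi$ localizes the $x$-integral away from the non-integrable tail), compute the resulting Gaussian integral $\int \phi(x)e^{-t|x|^2 - i\xi\cdot x}\,dx$, and extract the large-$|\xi|$ asymptotics by a Laplace/stationary-phase analysis in $t$; the dominant contribution comes from $t \sim |\xi|^2$ and reproduces the same $|\xi|^{\alpha - d}$ behavior with the $\phi(0)$ factor. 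Either way the arithmetic is routine; the care is entirely in the convergence bookkeeping near the singularity.
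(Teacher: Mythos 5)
Your proposal is correct and follows essentially the same route as the paper: both write $\widehat h$ as a constant times the convolution $\widehat\phi * |\cdot|^{\alpha-d}$ and then split the convolution integral into a region near the origin (which carries the mass $(2\pi)^d\phi(0)$ of $\widehat\phi$ and produces the main term $c_{d,\alpha}\phi(0)|\xi|^{\alpha-d}$) and a complementary region killed by the rapid decay of $\widehat\phi$. The only cosmetic differences are that the paper splits at an intermediate radius $R=\sqrt{|\xi|}$ into three pieces and concludes via dominated convergence, whereas you split at $|\xi|/2$ and use a first-order Taylor expansion of $|\xi-\eta|^{\alpha-d}$ to obtain a quantitative $O(|\xi|^{\alpha-d-1})$ error term; your variant is slightly more explicit but not a new method.
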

\begin{proof}
Throughout the proof, we denote by $K'$ any generic constant that may change from line to line. We first observe that $\widehat{h}$ is a continuous function. Hence it suffices to prove that there exists $C'>0$ such that
$$
\lim_{|\xi|\to \infty}|\xi|^{d-\alpha}\widehat{h}(\xi) = C'.
$$
By convolution theorem we have $\widehat{h}(\xi) = K'\left(\widehat{\phi} \ast |\cdot|^{\alpha-d}\right)(\xi)$.
Set $R = \sqrt{|\xi|} > 1$. We split the integral over the whole space in the following way:
\begin{align*}
|\xi|^{d-\alpha}\left(\widehat{\phi} \ast |\cdot|^{\alpha-d}\right)(\xi) &= \int_{\R^d} \widehat{\phi}(y)|\xi|^{d-\alpha}|\xi-y|^{\alpha-d}dy\\
&= \int_{B_R(0)}\widehat{\phi}(y)|\xi|^{d-\alpha}|\xi-y|^{\alpha-d}dy \\ 
&\quad + \int_{B^c_R(0) \cap \{|y-\xi|>|\xi|/2 \}}\widehat{\phi}(y)|\xi|^{d-\alpha}|\xi-y|^{\alpha-d}dy  \\
&\quad +\int_{B^c_R(0) \cap \{|y-\xi|<|\xi|/2 \} }\widehat{\phi}(y)|\xi|^{d-\alpha}|\xi-y|^{\alpha-d}dy.
\end{align*}
As $\phi$ is a smooth function, it holds that $|\widehat{\phi}(y)| \lesssim |y|^{-k}$ for any natural number $k$. We apply this to the second and third terms above. For the second term we obtain, since $|y-\xi|>|\xi|/2$,
\begin{align*}
    &\left|\int_{B^c_R(0) \cap \{|y-\xi|>|\xi|/2 \}}\widehat{\phi}(y)|\xi|^{d-\alpha}|\xi-y|^{\alpha-d}dy\right|\\
    &\lesssim \frac{1}{2^{\alpha-d}} \int_{|y|>R} |y|^{-k}dy \\
    &\lesssim  R^{-k+1} 
\end{align*}
that converges to zero for $k>1$ and since $R = \sqrt{|\xi|}\to \infty$. Similarly for the third term, since now $|\widehat{\phi}(y)|\lesssim |y|^{-k} \lesssim |\xi|^{-k}$,
\begin{align*}
    &\left|\int_{B^c_R(0) \cap \{|y-\xi|<|\xi|/2 \}}\widehat{\phi}(y)|\xi|^{d-\alpha}|\xi-y|^{\alpha-d}dy\right|\\
    &\lesssim|\xi|^{-k+d-\alpha}\int_{B^c_R(0) \cap \{|y-\xi|<|\xi|/2 \}}|\xi-y|^{\alpha-d}dy\\
    &\lesssim|\xi|^{-k+d}
\end{align*}
that converges to zero for $k>d$. For the remaining term, we first note that on $B_R(0)$ we have 
$$
|\xi|^{d-\alpha}|\xi-y|^{\alpha-d} \leq K'
$$
for some constant $K'> 0$, since $|y-\xi| \approx |\xi|$ (as $|y| \leq R=\sqrt{|\xi|}$). Hence we apply the Dominated Converge Theorem (as $\widehat{\phi}$ is integrable) to get
$$
\lim_{|\xi|\to \infty}\int_{B_R(0)}\widehat{\phi}(y)|\xi|^{d-\alpha}|\xi-y|^{\alpha-d}dy = \int_{\R^d}\widehat{\phi}(y)dy = (2\pi)^d\phi(0)>0,
$$
which establishes the result.
\end{proof}
\begin{lemma}\label{le:operator-coefficients}
    There exists constants $c$ and $C$ such that the Fourier coefficients of the kernel $H_\alpha = e^{\alpha g(x)}|x|_{\T^d}^{-\alpha}$ with $h \in C^\infty(\T^d)$ satisfy 
    \begin{equation}
    \label{eq:operator-coefficients}
    c\langle n\rangle^{\alpha-d} \leq \widehat{H_\alpha}(n) \leq C\langle n\rangle^{\alpha-d}
    \end{equation}
    for all $n \in \Z^d$. 
\end{lemma}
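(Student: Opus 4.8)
The plan is to localize the logarithmic singularity of $H_\alpha$, transfer the local piece to $\R^d$, apply Lemma \ref{lem:Fourier-decay}, and control the smooth remainder by super-polynomial decay. Write $R(x) = -\log|x|_{\T^d} + g(x)$ for the translation-invariant covariance attached to the field \eqref{eq:field-definition}, so that $H_\alpha = e^{\alpha R}$; note that $g$, and hence $H_\alpha$, is even because $R(y,z) = R(z,y)$, so $\widehat{H_\alpha}(n) = \int_{\T^d} H_\alpha(x)\cos(n\cdot x)\,dx$ is real, which is what makes a two-sided real inequality meaningful.

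First I would fix an even cutoff $\chi \in C_0^\infty(\R^d)$ with $\chi \equiv 1$ near the origin and $\supp \chi \subset \{|x| < 1/3\}$, which is precisely where $|x|_{\T^d}$ agrees with the Euclidean norm $|x|$, and split $H_\alpha = \chi H_\alpha + (1-\chi)H_\alpha$. The second piece extends to a function in $C^\infty(\T^d)$, since the singularity at the origin has been removed and $|x|_{\T^d}^{-\alpha}$ is smooth away from $0$; hence $|\widehat{(1-\chi)H_\alpha}(n)| \le C_N\langle n\rangle^{-N}$ for every $N$. On $\supp\chi$ one has $\chi(x)H_\alpha(x) = \psi(x)|x|^{-\alpha}$ with $\psi := \chi\,e^{\alpha g} \in C_0^\infty(\R^d)$ even and $\psi(0) = e^{\alpha g(0)} > 0$; viewing $\chi H_\alpha$ as a function on $\R^d$ supported in $B_{1/3}(0)$, its torus Fourier coefficients coincide with the $\R^d$-Fourier transform of $h := \psi\,|\cdot|^{-\alpha}$ evaluated at integer points, so Lemma \ref{lem:Fourier-decay} gives $c_0|n|^{\alpha-d} \le \widehat{\chi H_\alpha}(n) \le C_0|n|^{\alpha-d}$ for $|n| \ge K_0$.

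Combining the two pieces, $\widehat{H_\alpha}(n) = \widehat h(n) + O_N(\langle n\rangle^{-N})$; since $\alpha - d > -d$, choosing $N > d - \alpha$ makes the remainder bounded by $\tfrac12 c_0 |n|^{\alpha - d}$ once $|n|$ is large, and using $|n| \asymp \langle n\rangle$ for $|n| \ge 1$ this yields $c_1 \langle n\rangle^{\alpha-d} \le \widehat{H_\alpha}(n) \le C_1\langle n\rangle^{\alpha-d}$ for all $|n| \ge K_*$. It then remains to verify $\widehat{H_\alpha}(n) > 0$ for the finitely many $n$ with $|n| < K_*$; for this I would expand $H_\alpha = e^{\alpha R} = \sum_{k\ge 0}\tfrac{\alpha^k}{k!}R^k$, the series converging in $L^1(\T^d)$ (dominated by $e^{\alpha|R|}\in L^1$), and use that every $\widehat{R^k}(n) \ge 0$ while $\widehat R(n) > 0$ for all $n$ — the latter read off from \eqref{eq:field-definition}, where $\widehat R(n)$ is a positive multiple of $\langle n\rangle^{-d}$ — so that $\widehat{H_\alpha}(n) \ge \alpha\,\widehat R(n) > 0$. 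Shrinking $c$ and enlarging $C$ to absorb these finitely many values completes the argument.

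The main obstacle is not the decay exponent, which falls out of Lemma \ref{lem:Fourier-decay} once the singularity is correctly isolated inside a fundamental domain of $\T^d$, but rather producing a genuine two-sided estimate — in particular a strictly positive lower bound — valid for \emph{every} $n$, including $n = 0$ and the low frequencies about which Lemma \ref{lem:Fourier-decay} is silent; here the positive-definiteness of $H_\alpha = e^{\alpha R}$ and the strict positivity of the covariance's Fourier coefficients are exactly what is needed. A minor but necessary bookkeeping point is that $\chi$ and $g$ must both be even, so that the symmetry hypothesis of Lemma \ref{lem:Fourier-decay} holds and all transforms in play remain real-valued.
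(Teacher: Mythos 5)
Your proposal is correct and follows essentially the same route as the paper: isolate the singularity inside a fundamental domain via a compactly supported cutoff so Lemma~\ref{lem:Fourier-decay} applies (the paper's $\rho_1|x|^{-\alpha}+\rho_2$ is precisely your $\chi H_\alpha + (1-\chi)H_\alpha$ with $\rho_1 = \chi e^{\alpha g}$), and handle the finitely many low frequencies by expanding $e^{\alpha R}$ in a power series and using the convolution theorem together with $\widehat{R}(n) = \langle n\rangle^{-d} > 0$. The only differences are cosmetic bookkeeping (your explicit remarks on evenness, realness of $\widehat{H_\alpha}$, and the $L^1$ domination of the exponential series), all of which are consistent with the paper's argument.
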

\begin{proof}
Recall that our field is given by 
\begin{equation}
\label{eq:field}
X(y) =\sum_{n} \frac{A_n}{\langle n\rangle^{d/2}}e^{in\cdot y}
\end{equation}
which has the covariance (see e.g.~\cite{Oh2020})
$$
R(y-z) = \textnormal{Cov}(X(y),X(z))=-\log |y-z|_{\T^d} + g(y-z),
$$
where $g \in C^\infty(\T^d)$. In other words, we are computing the Fourier coefficients of $H_\alpha(x) = e^{\alpha R(x)}$. Note also that it suffices to prove that the coefficients are strictly positive and that~\eqref{eq:operator-coefficients} holds for large enough $|n|\geq N_0$. We begin by showing that the coefficients are strictly positive. Note that it follows from~\eqref{eq:field} that $R$ has Fourier coefficients
$$
\hat{R}(n) = \langle n\rangle^{-d}.
$$
Now
$$
e^{\alpha R(x)} = \sum_{k=0}^\infty \frac{\alpha^k}{k!}R^k(x)
$$
and hence 
$$
\widehat{H_\alpha}(n) = \sum_{k=0}^\infty \frac{\alpha^k}{k!}\widehat{R^k}(n).
$$
Now by the convolution theorem and the fact $\hat{R}(n)>0$ for each $n\in \Z^d$, we obtain $\widehat{H_\alpha}(n) > 0$. It remains to prove that lower and upper bounds in~\eqref{eq:operator-coefficients} hold for large enough $|n|$. To this end, by identifying $H_\alpha$ with a periodic function on $\R^d$, we note that decay of the Fourier coefficients is determined by the singularity $|x|_{\T^d}^{-\alpha}$. Namely, write 
$$
|x|_{\T^d}^{-\alpha}e^{\alpha g(x)} = \rho_1(x)|x|^{-\alpha}+\rho_2(x),
$$
where $\rho_1,\rho_2 \in C^\infty(\T^d)$ and $\rho_1$ satisfies $\rho_1(0)>0$ and $\text{supp}(\rho_1) \subset B\left(0,\frac13\right)$. The Fourier coefficients of $\rho_2$ decay rapidly. In turn, by interpreting the first term as a function in $\R^d$ it is enough to consider its Fourier  transform evaluated at points $\xi = n \in \Z^d$. Now the result follows from Lemma~\ref{lem:Fourier-decay}. This completes the proof. 
\end{proof}
\begin{lemma}\label{le:inversion}
    Let $s\in \R$. Then the  mapping $G_{\alpha}$ given in~\cref{eq:operator-G} extends to a bijective map
    \begin{align*}
        G_\alpha: W^{s+\alpha-d,2}(\T^d) \to W^{s,2}(\T^d).
    \end{align*}
\end{lemma}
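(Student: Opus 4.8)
The plan is to recognize $G_\alpha$ as a Fourier multiplier operator on $\T^d$ and then read off the isomorphism property directly from the two-sided symbol bound established in Lemma~\ref{le:operator-coefficients}. Since throughout we work with the exact field~\eqref{eq:field-definition}, whose covariance is translation invariant, the kernel in~\eqref{eq:operator-G} is the single-variable function $H_\alpha(x) = |x|_{\T^d}^{-\alpha}e^{\alpha g(x)}$, so that $G_\alpha\phi = H_\alpha\ast\phi$ is a genuine convolution on $\T^d$. Because $\alpha\in(0,d)$, the singularity $|x|^{-\alpha}$ is locally integrable in $\R^d$ and $g$ is continuous on the compact $\T^d$, hence $H_\alpha\in L^1(\T^d)$; with the normalization of Fourier coefficients fixed in Subsection~\ref{subsec:notation}, Fubini's theorem then gives $\widehat{G_\alpha\phi}(n) = \widehat{H_\alpha}(n)\,\widehat{\phi}(n)$ for every $\phi\in C^\infty(\T^d)$ and $n\in\Z^d$.

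Next I would define the candidate extension on the Fourier side: for $f\in S'(\T^d)$ set $\widehat{(G_\alpha f)}(n):=\widehat{H_\alpha}(n)\,\widehat f(n)$. By Lemma~\ref{le:operator-coefficients} there are constants $0<c\le C$ with $c\langle n\rangle^{\alpha-d}\le\widehat{H_\alpha}(n)\le C\langle n\rangle^{\alpha-d}$ for all $n\in\Z^d$. Hence, for $f\in W^{s+\alpha-d,2}(\T^d)$,
\[
	\|G_\alpha f\|_{s,2}^2 = \sum_{n\in\Z^d}\langle n\rangle^{2s}\,\widehat{H_\alpha}(n)^2\,|\widehat f(n)|^2 \;\le\; C^2\sum_{n\in\Z^d}\langle n\rangle^{2(s+\alpha-d)}|\widehat f(n)|^2 \;=\; C^2\|f\|_{s+\alpha-d,2}^2,
\]
so $G_\alpha\colon W^{s+\alpha-d,2}(\T^d)\to W^{s,2}(\T^d)$ is bounded. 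Since this operator agrees with~\eqref{eq:operator-G} on the subspace $C^\infty(\T^d)$, which is dense in every $W^{t,2}(\T^d)$, it is the unique continuous extension of $G_\alpha$, justifying the word ``extends'' in the statement.

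Bijectivity is then argued symbol-wise. For injectivity, $\widehat{H_\alpha}(n)>0$ for every $n$ (the positivity part of Lemma~\ref{le:operator-coefficients}), so $G_\alpha f=0$ forces $\widehat f(n)=0$ for all $n$, i.e. $f=0$. For surjectivity, given $g\in W^{s,2}(\T^d)$ define $f$ by $\widehat f(n):=\widehat{H_\alpha}(n)^{-1}\widehat g(n)$; using $\widehat{H_\alpha}(n)^{-1}\le c^{-1}\langle n\rangle^{d-\alpha}$ we get
\[
	\|f\|_{s+\alpha-d,2}^2 = \sum_{n\in\Z^d}\langle n\rangle^{2(s+\alpha-d)}\,\widehat{H_\alpha}(n)^{-2}\,|\widehat g(n)|^2 \;\le\; c^{-2}\sum_{n\in\Z^d}\langle n\rangle^{2s}|\widehat g(n)|^2 \;=\; c^{-2}\|g\|_{s,2}^2<\infty,
\]
so $f\in W^{s+\alpha-d,2}(\T^d)$ and $G_\alpha f=g$. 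In fact this exhibits $G_\alpha^{-1}$ as the bounded Fourier multiplier with symbol $\widehat{H_\alpha}(n)^{-1}\asymp\langle n\rangle^{d-\alpha}$, so $G_\alpha$ is a topological isomorphism.

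I do not anticipate a serious obstacle here: the essential analytic input — the sharp estimate $\widehat{H_\alpha}(n)\asymp\langle n\rangle^{\alpha-d}$ together with strict positivity — is precisely Lemma~\ref{le:operator-coefficients}, and the remainder is the standard correspondence between Fourier-multiplier bounds and isomorphisms of the $L^2$-Sobolev scale. The only points needing a little care are bookkeeping: confirming $H_\alpha\in L^1(\T^d)$ so that $\widehat{G_\alpha\phi}(n)=\widehat{H_\alpha}(n)\widehat\phi(n)$ is legitimate on smooth inputs, keeping the convolution normalization consistent with Definition~\ref{def:frac-sobolev-general}, and noting that it is \emph{positivity} of $\widehat{H_\alpha}$, not merely its size, that yields injectivity and allows the division in the surjectivity step.
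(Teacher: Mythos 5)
Your proof is correct and takes essentially the same route as the paper: both recognize $G_\alpha$ as a convolution (Fourier multiplier) operator and read off the norm equivalence $\|G_\alpha u\|_{s,2}\asymp\|u\|_{s+\alpha-d,2}$ directly from the two-sided symbol bound of Lemma~\ref{le:operator-coefficients}. The only cosmetic difference is that the paper deduces surjectivity from lower-boundedness plus density of the range, while you construct the inverse multiplier $\widehat{H_\alpha}(n)^{-1}$ explicitly; these are interchangeable, and your additional checks ($H_\alpha\in L^1(\T^d)$, positivity for injectivity) are details the paper leaves implicit.
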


\begin{proof}
    Let $s \in \R$. Since $G_\alpha u = H_\alpha \ast u$ is a convolution operator with kernel $H_\alpha$, we have, thanks to convolution theorem and Lemma~\ref{le:operator-coefficients}, that 
\begin{align*}
    \Vert G_\alpha u\Vert_{s,2}^2 & = \sum_{n\in\Z^d}[\widehat{G_\alpha u}(n)]^2 \langle n\rangle^{2s}\\
    &= \sum_{n\in\Z^d}[\widehat{H_\alpha}(n)]^2[\widehat{u}(n)]^2 \langle n\rangle^{2s}\\
    &\approx\sum_{n\in\Z^d}[\widehat{u}(n)]^2 \langle n\rangle^{2s+2\alpha -2d} \\
    &= \Vert u\Vert_{s + \alpha -d,2}^2.
\end{align*}    
This shows that  $G_\alpha : W^{s+\alpha-d,2}(\T^d) \to W^{s,2}(\T^d)$ is bounded and lower bounded. Since the image is obviously dense, it is actually an isomorphism.
This completes the proof.
\end{proof}
Now that the mapping properties of the operator $G_\alpha$ are established, we can characterize random variables in the subspace $L^2_\beta(\Omega)$ and define the $S$-transforms, similar to~\cite{AKNSTV-1}. While the proofs are analogous to the ones presented in~\cite{AKNSTV-1}, for the reader's convenience we present the key steps.
\begin{corollary}
   Let $\beta\in(0,\sqrt{d})$ and suppose $Z \in L^2_\beta(\Omega)$. Then $Z$ has the form
    \begin{equation}
    \label{eq:Z-form}
        Z = \int_{\T^d}\varphi(z)d\mu_\beta(z),
    \end{equation}
    for some $\varphi \in W^{-s_\beta,2}(\T^d)$, where $s_\beta= \frac{d-\beta^2}{2}$.
\end{corollary}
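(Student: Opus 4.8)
The plan is to identify $L^2_\beta(\Omega)$, up to equivalence of norms, with $W^{-s_\beta,2}(\T^d)$ via the linear map $\varphi\mapsto\int_{\T^d}\varphi\,d\mu_\beta$, and then read off the representation from surjectivity. Concretely, for $\varphi\in C^\infty(\T^d)$ set $T\varphi:=\int_{\T^d}\varphi(z)\,d\mu_\beta(z)\in L^2_\beta(\Omega)$. Polarizing the (formal but, as noted after \eqref{eq:co4}, justified) second-moment identity that precedes \eqref{eq:operator-G}, one has $\E[T\varphi_1\,T\varphi_2]=\langle\varphi_1,G_{\beta^2}\varphi_2\rangle_{L^2(\T^d)}$ for all $\varphi_1,\varphi_2\in C^\infty(\T^d)$; in particular this bilinear form is symmetric (symmetry of the kernel $H_{\beta^2}$) and positive definite (strict positivity of $\widehat{H_{\beta^2}}(n)$ in Lemma~\ref{le:operator-coefficients}). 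Since $G_{\beta^2}$ is convolution by $H_{\beta^2}$, Parseval together with Lemma~\ref{le:operator-coefficients} (applicable as $\beta^2\in(0,d)$) gives
\[
\E\big[(T\varphi)^2\big]=\sum_{n\in\Z^d}\widehat{H_{\beta^2}}(n)\,|\widehat{\varphi}(n)|^2\;\approx\;\sum_{n\in\Z^d}\langle n\rangle^{\beta^2-d}\,|\widehat\varphi(n)|^2=\Vert\varphi\Vert_{-s_\beta,2}^2,
\]
where the last equality uses $-s_\beta=\tfrac{\beta^2-d}{2}$. Thus $T$ is bounded and bounded below from $(C^\infty(\T^d),\Vert\cdot\Vert_{-s_\beta,2})$ into $L^2(\Omega)$.

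Next I would extend by density: since $C^\infty(\T^d)$ is dense in $W^{-s_\beta,2}(\T^d)$, $T$ extends uniquely to a bounded operator $\overline{T}\colon W^{-s_\beta,2}(\T^d)\to L^2(\Omega)$ obeying the same two-sided bound $\Vert\overline{T}\varphi\Vert_{L^2(\Omega)}\approx\Vert\varphi\Vert_{-s_\beta,2}$. Being bounded below on a complete space, $\overline{T}$ has closed range; this range contains every $T\varphi$ with $\varphi$ smooth, hence contains their closed linear span, which by Definition~\ref{def:L2-GMC} is exactly $L^2_\beta(\Omega)$, while conversely $\operatorname{ran}\overline{T}\subset L^2_\beta(\Omega)$ because the latter is closed. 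Hence $\overline{T}\colon W^{-s_\beta,2}(\T^d)\to L^2_\beta(\Omega)$ is a topological isomorphism. Given $Z\in L^2_\beta(\Omega)$, put $\varphi:=\overline{T}^{\,-1}Z\in W^{-s_\beta,2}(\T^d)$; then $Z=\overline{T}\varphi$, which is precisely the meaning of \eqref{eq:Z-form} for a distributional density, namely $Z=\lim_{k\to\infty}\int_{\T^d}\varphi_k\,d\mu_\beta$ in $L^2(\Omega)$ for any $\varphi_k\to\varphi$ in $W^{-s_\beta,2}(\T^d)$. Injectivity of $\overline{T}$ moreover gives uniqueness of $\varphi$, though only existence is claimed.

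The genuinely substantive ingredient, the sharp Fourier asymptotics $\widehat{H_{\beta^2}}(n)\approx\langle n\rangle^{\beta^2-d}$, is already in hand via Lemma~\ref{le:operator-coefficients}, so I do not expect a serious analytic obstacle; the main point requiring care is interpretive rather than technical. For $\varphi\notin L^1(\T^d)$ the symbol $\int_{\T^d}\varphi\,d\mu_\beta$ has no pointwise meaning, and one must fix once and for all that it denotes $\overline{T}\varphi$ (equivalently, the $L^2(\Omega)$-limit along smooth approximations), which is well defined precisely because $T$ is an isometry for the $W^{-s_\beta,2}$-norm on its smooth core. A secondary bookkeeping point is to confirm that $\langle\varphi_1,G_{\beta^2}\varphi_2\rangle_{L^2(\T^d)}$ is a bona fide inner product on $C^\infty(\T^d)$, which, as noted above, follows from the symmetry and strict positivity of $\widehat{H_{\beta^2}}$.
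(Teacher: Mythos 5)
Your proposal is correct and takes essentially the same route as the paper: both compute $\E[(\int\varphi\,d\mu_\beta)^2]=\langle\varphi,G_{\beta^2}\varphi\rangle_{L^2}$ for smooth $\varphi$, identify this with $\Vert\varphi\Vert^2_{-s_\beta,2}$ up to constants, and conclude by taking the $L^2(\Omega)$-closure. You unpack the Fourier-side estimate via Lemma~\ref{le:operator-coefficients} and spell out the closed-range/extension-by-density argument, whereas the paper cites Lemma~\ref{le:inversion} and compresses the closure step into one sentence; the content is the same.
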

\begin{proof}
    For $Z = \int_{\T^d} \varphi(z) d\mu_\beta(z)$ with $\varphi \in C^\infty(\T^d)$ we have
    \begin{align*}
        \EX [Z^2] & = \int_{\T^d} \int_{\T^d} \varphi(z)\varphi(y)\exp(\beta^2 \EX [X(y)X(z)]) dydz                          \\
                & = \int_{\T^d} \varphi(z)G_{\beta^2}\varphi(z)dz \approx \|\varphi\|^2_{W^{-{s_\beta},2}(\T^d)}
    \end{align*}
    by Lemma~\ref{le:inversion}. The claim now follows by taking the closure in $L^2(\Omega)$.
\end{proof}
\begin{remark}
    The integral in \eqref{eq:Z-form} is understood as the limit in $L^2(\Omega)$ when $\varphi\in W^{-s_\beta,2}(\T^d)$ is approximated with a sequence $\varphi_n \in C^\infty(\T^d)$, and our computation above shows that this limit is independent of the sequence $\varphi_n$. 
\end{remark}
Let us next recall that the concept of $S$-transform of $Z \in L^2(\Omega)$ evaluated at a centered Gaussian random variable $h$ is given by
\begin{align}\label{eq:S-transform}
    (SZ)(h) = \E\left[Ze^{\diamond h}\right],
\end{align}
where
$$
e^{\diamond h} \;:=\; e^{h - \mathbb{E}[h^2 /2]}
$$
is again the Wick exponential. It is well-known that $S$-transform characterizes the random variables in the $L^2(\Omega)$ space of random variables measurable with respect to the sigma-algebra generated by the Gaussian random variables. Indeed, this follows from the fact that such space is spanned by the Wick exponentials $e^{\diamond h}$, see e.g.~\cite{Janson1997}. Moreover, the $S$-transform can be generalized to random elements in the so-called Hida space, in which one can also define the so-called Wick product through the identity
\begin{equation}
\label{eq:wick-product}
(S(Z\diamond Y))(h) = (SZ)(h)\cdot (SY)(h),
\end{equation}
see~\cite{kondratiev1996} for details.

For our purposes, we follow the ideas of our previous article~\cite{AKNSTV-1} and define the concept of $S$-transform also at evaluation points $X_\beta(z)$, characterized by projections onto the subspace $L^2_\beta(\Omega)$. After that, we can define the Wick product and our solution through identity~\eqref{eq:wick-product}. Indeed, an element $V \in L^2_{\beta}(\Omega)$ can be expressed as $\textcolor{black}{\mu_\beta(\psi)} = \int_{\T^d} \psi(z) d\mu_\beta(z)$, where $\psi \in W^{-s_\beta,2}(\T^d)$ with $s_\beta= \frac{d-\beta^2}{2}$, a formal computation gives, for any $Z \in L^2(\Omega)$, that
\begin{equation*}
   \EX [Z\textcolor{black}{\mu_\beta(\psi)}]  = \EX\left[\int_{\T^d} \psi(z)Ze^{\diamond X_\beta(z)}dz\right] =\int_{\T^d} \psi(z)(SZ)(X_\beta(z))dz.
\end{equation*}
Note also that the mapping $\psi \mapsto \EX [Z\textcolor{black}{\mu_\beta(\psi)}]$ is a continuous functional on $W^{-s_\beta,2}(\T^d)$. Hence, by duality, there exists $\iota \in W^{s_\beta,2}(\T^d)$ such that
\begin{equation*}
    \EX [Z\textcolor{black}{\mu_\beta(\psi)}] = \int_{\T^d} \psi(z) \iota(z)dz.
\end{equation*}
This leads to the following definition.
\begin{definition}[$S$-transform in $L^2_\beta(\Omega)$]\label{def:S-transform-log}
    Let $Z \in L^2(\Omega)$ and set $s_\beta= \frac{d-\beta^2}{2}$. For $z\in \T^d$ we identify $(SZ)(X_\beta(z))$ as the (unique) element of $W^{s_\beta,2}(\T^d)$ defined through duality
    \begin{equation*}
        \EX [Z\textcolor{black}{\mu_\beta(\psi)}] = \int_D \psi(z)(SZ)(X_{\beta}(z))dz
    \end{equation*}
    for all $\textcolor{black}{\mu_\beta(\psi)} = \int_{\T^d} \psi(z) d\mu_\beta(z) \in L^2_{\beta}(\Omega)$.
\end{definition}
\begin{remark}
    Note that we have defined $(SZ)(X_\beta(z))$ as a function in $W^{s_\beta,2}(\T^d)$, defined for almost all $z\in \T^d$.
\end{remark}
Our next result shows how $S$-transform is linked to the projections into $L^2_\beta(\Omega)$ (cf.~\cite[Lemma 5.9]{AKNSTV-1}).
\begin{lemma}
    Let $Z \in L^2(\Omega)$ with a decomposition $Z = Z_\beta + Z'_\beta$, where $Z_\beta = \int_{\T^d} \varphi(s)d\mu_\beta(s) \in L^2_\beta(\Omega)$ is the orthogonal projection onto the subspace $L^2_\beta(\Omega) \subset L^2(\Omega)$. Then we have
    \begin{equation*}
        (SZ_\beta)(X_\beta(z)) = \left(G_{\beta^2}\varphi\right)(z)
    \end{equation*}
    and
    \begin{equation*}
        (SZ'_\beta)(X_\beta(z)) = 0.
    \end{equation*}
\end{lemma}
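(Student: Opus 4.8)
The plan is to unwind \cref{def:S-transform-log} separately for the two summands, exploiting that $Z_\beta$ is the orthogonal projection of $Z$ onto $L^2_\beta(\Omega)$ while $Z'_\beta$ is orthogonal to that subspace.

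\textbf{The term $Z'_\beta$.} By definition of the orthogonal projection, $\EX[Z'_\beta V]=0$ for every $V=\int_{\T^d}\psi(z)\,d\mu_\beta(z)\in L^2_\beta(\Omega)$. On the other hand, \cref{def:S-transform-log} states that $\EX[Z'_\beta V]=\int_{\T^d}\psi(z)\,(SZ'_\beta)(X_\beta(z))\,dz$ for all such $V$, i.e.\ for all $\psi\in C^\infty(\T^d)$, which is dense in $W^{-s_\beta,2}(\T^d)$. Hence the bounded functional $\psi\mapsto\int_{\T^d}\psi(z)(SZ'_\beta)(X_\beta(z))\,dz$ on $W^{-s_\beta,2}(\T^d)$ is identically zero; since $(SZ'_\beta)(X_\beta(\cdot))$ is by construction the unique element of $W^{s_\beta,2}(\T^d)=(W^{-s_\beta,2}(\T^d))'$ representing this functional, we get $(SZ'_\beta)(X_\beta(z))=0$.

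\textbf{The term $Z_\beta$.} Since $Z_\beta\in L^2_\beta(\Omega)$, the Corollary above gives $Z_\beta=\int_{\T^d}\varphi(z)\,d\mu_\beta(z)$ with $\varphi\in W^{-s_\beta,2}(\T^d)$. For $V=\int_{\T^d}\psi(z)\,d\mu_\beta(z)$ with $\psi\in C^\infty(\T^d)$, the covariance computation already recorded above gives $\EX[Z_\beta V]=\langle\varphi,G_{\beta^2}\psi\rangle_{L^2(\T^d)}$; this identity was derived for smooth $\varphi$ and extends to $\varphi\in W^{-s_\beta,2}(\T^d)$ by approximating $\varphi$ with smooth functions, the left-hand side being continuous in $\varphi\in W^{-s_\beta,2}(\T^d)$ (in the $L^2(\Omega)$ sense) and the right-hand side by the boundedness $G_{\beta^2}\colon W^{-s_\beta,2}(\T^d)\to W^{s_\beta,2}(\T^d)$ of \cref{le:inversion} together with the $W^{-s_\beta,2}$--$W^{s_\beta,2}$ duality. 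Because $G_{\beta^2}$ is convolution against the even kernel $H_{\beta^2}$ (see \cref{eq:operator-G}), it is self-adjoint on $L^2(\T^d)$, so $\langle\varphi,G_{\beta^2}\psi\rangle_{L^2}=\langle G_{\beta^2}\varphi,\psi\rangle_{L^2}=\int_{\T^d}\psi(z)(G_{\beta^2}\varphi)(z)\,dz$, where $G_{\beta^2}\varphi\in W^{s_\beta,2}(\T^d)$ by \cref{le:inversion} (note $s_\beta+\beta^2-d=-s_\beta$). Comparing with \cref{def:S-transform-log} and using uniqueness of the representative in $W^{s_\beta,2}(\T^d)$, we conclude $(SZ_\beta)(X_\beta(z))=(G_{\beta^2}\varphi)(z)$.

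\textbf{Main obstacle.} There is no deep difficulty here: the statement is essentially the definitions combined with the mapping properties of $G_{\beta^2}$ established earlier. The only points needing a little care are (i) upgrading the covariance identity from smooth $\varphi$ to $\varphi\in W^{-s_\beta,2}(\T^d)$ by the density/continuity argument above, and (ii) keeping track of the self-adjointness of $G_{\beta^2}$ so that, after the pairing is transferred onto $\psi$, the result matches exactly the form demanded by \cref{def:S-transform-log}. Both are routine.
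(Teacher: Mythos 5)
Your proof is correct and follows essentially the same route as the paper: orthogonality kills $Z'_\beta$, and the GMC covariance computation identifies $(SZ_\beta)(X_\beta(\cdot))$ with $G_{\beta^2}\varphi$. The only cosmetic difference is that the paper evaluates $\EX[Z_\beta V]$ directly as $\int\psi\,(G_{\beta^2}\varphi)\,dz$ by performing the inner $y$-integration first, whereas you first obtain $\langle\varphi,G_{\beta^2}\psi\rangle$ and then pass through self-adjointness of $G_{\beta^2}$ — equivalent by Fubini and the symmetry $R(y,z)=R(z,y)$ of the kernel.
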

\begin{proof}
    The claim $(SZ'_\beta)(X_\beta(z)) = 0$ follows immediately from~\cref{def:S-transform-log} by observing that then
    \begin{equation*}
        \int_{\T^d} \psi(z)(SZ'_\beta)(X_\beta(z)) dz = 0 \quad \forall \psi \in W^{-s_\beta,2}(\T^d).
    \end{equation*}
    For $Z_\beta$, obtain that
    \begin{equation*}
        \EX [Z_\beta \textcolor{black}{\mu_\beta(\psi)}] = \EX \left[\int_{\T^d} \int_{\T^d} \psi(z)\varphi(y)d \mu_\beta(z)d\mu_\beta(y)\right] = \int_{\T^d} \psi(z) \left(G_{\beta^2}\varphi\right)(z) dz,
    \end{equation*}
    where now, by~\cref{le:inversion}, $\left(G_{\beta^2}\varphi\right)(z) \in W^{s_\beta,2}(\T^d)$. This completes the proof.
\end{proof}
For the proof of the next lemma, see ~\cite[Proof of Lemma 5.10]{AKNSTV-1}.
\begin{lemma}\label{lemma:S-uniqueness}
    The $S$-transform at points $X_\beta(z)$ determine\textcolor{black}{s} random variables $Z \in L^2_\beta(\Omega)$ uniquely.
\end{lemma}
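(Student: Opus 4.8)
The plan is to show that the $S$-transform at the points $X_\beta(z)$ is injective on $L^2_\beta(\Omega)$; since it is obviously linear, it then determines elements of $L^2_\beta(\Omega)$ uniquely. So suppose $Z \in L^2_\beta(\Omega)$ satisfies $(SZ)(X_\beta(z)) = 0$ for almost every $z \in \T^d$, and we must conclude $Z = 0$.

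First I would invoke the corollary above, which tells us that $Z$ has the form $Z = \int_{\T^d} \varphi(z)\, d\mu_\beta(z)$ for some $\varphi \in W^{-s_\beta,2}(\T^d)$, where $s_\beta = \tfrac{d-\beta^2}{2}$. Next, by the preceding lemma applied to $Z_\beta = Z$ (here $Z'_\beta = 0$ since $Z$ already lies in $L^2_\beta(\Omega)$), we have the identification
\begin{equation*}
    (SZ)(X_\beta(z)) = \left(G_{\beta^2}\varphi\right)(z) \quad \text{for a.e. } z \in \T^d.
\end{equation*}
Therefore the hypothesis $(SZ)(X_\beta(z)) = 0$ a.e.\ translates into $G_{\beta^2}\varphi = 0$ in $W^{s_\beta,2}(\T^d)$.

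Now I would apply Lemma~\ref{le:inversion} with $\alpha = \beta^2$ and $s = s_\beta$: since $s_\beta + \beta^2 - d = \tfrac{d-\beta^2}{2} + \beta^2 - d = -\tfrac{d-\beta^2}{2} = -s_\beta$, the operator $G_{\beta^2}$ is a bijection (indeed an isomorphism) from $W^{-s_\beta,2}(\T^d)$ onto $W^{s_\beta,2}(\T^d)$. In particular it is injective, so $G_{\beta^2}\varphi = 0$ forces $\varphi = 0$ in $W^{-s_\beta,2}(\T^d)$. Consequently $Z = \int_{\T^d}\varphi(z)\,d\mu_\beta(z) = 0$ in $L^2(\Omega)$, using that the map $\varphi \mapsto \int \varphi\, d\mu_\beta$ from $W^{-s_\beta,2}(\T^d)$ to $L^2_\beta(\Omega)$ is (by the computation $\E[Z^2] \approx \|\varphi\|^2_{W^{-s_\beta,2}}$ in the corollary's proof) an isometry up to constants, hence injective.

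There is no serious obstacle here; the entire content has been pre-packaged into the bijectivity statement of Lemma~\ref{le:inversion} and the identification of the $S$-transform with $G_{\beta^2}\varphi$. The only point requiring a little care is the bookkeeping of the Sobolev indices — checking that $s_\beta + \beta^2 - d = -s_\beta$ so that Lemma~\ref{le:inversion} does indeed give a bijection between the two mutually dual spaces $W^{-s_\beta,2}(\T^d)$ and $W^{s_\beta,2}(\T^d)$ — and noting that injectivity of $G_{\beta^2}$ together with injectivity of the chaos integration map $\varphi \mapsto \int \varphi\, d\mu_\beta$ is exactly what is needed. One should also remark that this, combined with the earlier lemma, shows that the full $S$-transform of an arbitrary $Z \in L^2(\Omega)$ at the points $X_\beta(z)$ recovers precisely the orthogonal projection of $Z$ onto $L^2_\beta(\Omega)$, which is the form in which the result will be used later.
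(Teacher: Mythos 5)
Your proof is correct and follows exactly the route the paper indicates: the paper states only that the result ``follows from the bijectivity of $G_\alpha$'' and defers to \cite[Lemma 5.10]{AKNSTV-1}, and you have filled in precisely that argument — representing $Z$ via the Corollary, identifying the $S$-transform with $G_{\beta^2}\varphi$ via the preceding lemma, checking the Sobolev index bookkeeping $s_\beta + \beta^2 - d = -s_\beta$ so that Lemma~\ref{le:inversion} applies, and concluding $\varphi=0$, hence $Z=0$ by the isometry $\E[Z^2]\approx\|\varphi\|^2_{W^{-s_\beta,2}}$.
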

We end this section by the following result allowing us to compute projections of random variables in $L^2_\beta(\T^d)$ into $L^2_{\beta'}(\T^d)$. For an analogous result in the univariate setting, see~\cite[Proposition 5.11]{AKNSTV-1}.
\begin{proposition}\label{prop:projection}
    Let $\beta,\beta' \in (0,\sqrt{d})$ and let
    \begin{equation*}
        Z = \int_{\T^d} \varphi(z)d\mu_\beta(z) \in L^2_\beta(\Omega)
    \end{equation*}
    for some $\varphi \in W^{-\frac{d-\beta^2}{2},2}(\T^d)$. Denote the projection of $Z$ into the subspace $L_{\beta'}^2(\Omega)$ by
    \begin{equation*}
        Z_{\beta'} = \int_{\T^d} \varphi_{\beta'}(z)d\mu_{\beta'}(z).
    \end{equation*}
    The function $\varphi_{\beta'} \in W^{-\frac{d-(\beta')^2}{2},2}(\T^d)$ solves
    $G_{\beta\beta'}\varphi = G_{(\beta')^2}\varphi_{\beta'}$.
\end{proposition}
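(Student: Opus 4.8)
The plan is to compute both sides of the claimed identity by testing against arbitrary smooth functions and using the characterization of the $S$-transform on $L^2_{\beta'}(\Omega)$ together with the covariance formula $\E[e^{\diamond H_1}e^{\diamond H_2}] = e^{\E(H_1H_2)}$. First I would recall that, by definition of the orthogonal projection, $Z_{\beta'}$ is the unique element of $L^2_{\beta'}(\Omega)$ such that $\E[ZV] = \E[Z_{\beta'}V]$ for every $V = \int_{\T^d}\psi(z)\,d\mu_{\beta'}(z) \in L^2_{\beta'}(\Omega)$. Hence it suffices to compute $\E[ZV]$ in two ways: once directly using $Z = \int \varphi\,d\mu_\beta$, and once using $Z_{\beta'} = \int \varphi_{\beta'}\,d\mu_{\beta'}$.

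For the direct computation, I would write (formally, and then justify by approximating the fields $X$ by mollified fields $X_\eps$ as in the derivation of \eqref{eq:co4}, and by first taking $\varphi \in C^\infty$ and passing to the limit)
\begin{align*}
    \E[ZV] &= \E\left[\int_{\T^d}\int_{\T^d} \varphi(y)\psi(z)\, e^{\diamond X_\beta(y)} e^{\diamond X_{\beta'}(z)}\, dy\, dz\right]
    = \int_{\T^d}\int_{\T^d} \varphi(y)\psi(z)\, e^{\beta\beta' R(y,z)}\, dy\, dz,
\end{align*}
since $X_\beta(y) = \beta X(y)$ and $X_{\beta'}(z) = \beta' X(z)$ are centered Gaussians with $\E[X_\beta(y)X_{\beta'}(z)] = \beta\beta' R(y,z)$. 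Recognizing $e^{\beta\beta' R(y,z)} = |y-z|_{\T^d}^{-\beta\beta'} e^{\beta\beta' g(y,z)}$ as the kernel of $G_{\beta\beta'}$, this equals $\int_{\T^d}\psi(z)\,(G_{\beta\beta'}\varphi)(z)\,dz$. Running the same computation with $Z_{\beta'}$ in place of $Z$ (now both fields have parameter $\beta'$, so the relevant cross-covariance is $(\beta')^2 R$) gives $\E[Z_{\beta'}V] = \int_{\T^d}\psi(z)\,(G_{(\beta')^2}\varphi_{\beta'})(z)\,dz$. Equating the two and using that $\psi$ ranges over a dense subset yields $G_{\beta\beta'}\varphi = G_{(\beta')^2}\varphi_{\beta'}$ as elements of the appropriate Sobolev space.

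It remains to check that this equation is consistent with the stated regularity $\varphi_{\beta'} \in W^{-\frac{d-(\beta')^2}{2},2}(\T^d)$ and in fact determines $\varphi_{\beta'}$ uniquely: by Lemma~\ref{le:inversion}, $G_{\beta\beta'}$ maps $W^{-\frac{d-\beta^2}{2},2} \to W^{\frac{d-2\beta\beta'+\beta^2}{2},2}$ hmm — more to the point, $G_{\beta\beta'}\varphi$ lands in some Sobolev space $W^{t,2}$ with $t = -\frac{d-\beta^2}{2} + (d - \beta\beta') = \frac{d + \beta^2 - 2\beta\beta'}{2}$, and $G_{(\beta')^2}$ is a bijection from $W^{t - (d-(\beta')^2),2}$ onto $W^{t,2}$; one computes $t - (d-(\beta')^2) = \frac{(\beta-\beta')^2 - d + ...}{}$, and the point is simply that $G_{(\beta')^2}$ being bijective lets us define $\varphi_{\beta'} := G_{(\beta')^2}^{-1} G_{\beta\beta'}\varphi$ unambiguously, and a quick index count confirms it lies in $W^{-\frac{d-(\beta')^2}{2},2}(\T^d)$. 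Since $\int\varphi_{\beta'}\,d\mu_{\beta'}$ then reproduces $\E[ZV]$ for all $V$, it is the projection.

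The main obstacle is not the algebra but the rigorous justification of the formal Fubini/Gaussian-moment manipulations when $\varphi$ and $\psi$ are only distributions: one must approximate $X$ by $X_\eps$, approximate $\varphi$ by smooth functions, verify the $L^2(\Omega)$ convergence of $\int \varphi_n\,d\mu_\beta$ (which is exactly the content of the $L^2$-isometry via $G_{\beta^2}$ established earlier), and check that the kernel $e^{\beta\beta' R}$ is integrable — this is where $\beta,\beta' < \sqrt d$ enters, since $\beta\beta' < d$ guarantees $|y-z|_{\T^d}^{-\beta\beta'}$ is locally integrable on $\T^d\times\T^d$, exactly as in the estimate \eqref{eq:co4}. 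Given the groundwork already laid (the $L^2_\beta(\Omega)$ formalism, Lemma~\ref{le:inversion}, and the formal covariance computations preceding Definition~\ref{def:S-transform-log}), these are routine, and the analogous one-dimensional statement in \cite[Proposition 5.11]{AKNSTV-1} can be cited for the details.
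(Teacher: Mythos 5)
Your proposal is correct and follows essentially the same route as the paper: test $Z$ and $Z_{\beta'}$ against an arbitrary $V=\int\psi\,d\mu_{\beta'}$, compute the pairings via the Gaussian covariance formula to produce $G_{\beta\beta'}\varphi$ and $G_{(\beta')^2}\varphi_{\beta'}$ respectively, and invoke the mapping properties/bijectivity from Lemma~\ref{le:inversion} (with the inequality equivalent to $(\beta-\beta')^2\ge 0$) to make sense of the identity and invert. The paper phrases the middle step through the $S$-transform language of Definition~\ref{def:S-transform-log} and Lemma~\ref{lemma:S-uniqueness} rather than computing $\E[ZV]$ directly, but the underlying calculation is the same.
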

\begin{proof}
    Let $Z = \int_{\T^d} \varphi(z)d\mu_\beta(z) \in L^2_\beta(\Omega)$ and $\textcolor{black}{\mu_\beta(\psi)} = \int_{\T^d} \psi(z)d\mu_{\beta'}(z) \in L^2_{\beta'}(\Omega)$ be a test random variable. It is enough to check that $\EX [Z_{\beta'}\textcolor{black}{\mu_\beta(\psi)}] = \EX [Z\textcolor{black}{\mu_\beta(\psi)}]$ for each $\psi \in W^{-\frac{d-(\beta')^2}{2},2}(\T^d)$. Here we have
    \begin{equation*}
        \EX [Z_{\beta'}\textcolor{black}{\mu_\beta(\psi)}] = \int_{\T^d} \psi(z)G_{(\beta')^2}\varphi_{\beta'}(z)dz.
    \end{equation*}
    Similarly,~\cref{def:S-transform-log} yields
    \begin{equation*}
        \EX [Z\textcolor{black}{\mu_\beta(\psi)}] = \int_{\T^d} \psi(z)(SZ)(X_{\beta'}(z))dz 
    \end{equation*}
    for $(SZ)(X_{\beta'}(z)) \in  W^{\frac{d-(\beta')^2}{2},2}(\T^d)$ while formal computations as above gives, assuming \eqref{eq:claim_projection} is true,
    \begin{equation}
        \label{eq:beta'-projection}
        \EX [Z\textcolor{black}{\mu_\beta(\psi)}] = \int_{\T^d} \psi(z)G_{\beta\beta'}\varphi(z)dz,
    \end{equation}
    where $\psi \in W^{-\frac{d-(\beta')^2}{2},2}(\T^d)$ and $\varphi \in  W^{-\frac{d-\beta^2}{2},2}(\T^d)$. 
    We claim that indeed
    \begin{align}\label{eq:claim_projection}
        G_{\beta\beta'}\varphi(z) \in W^{\frac{d-(\beta')^2}{2},2}(\T^d).      
    \end{align}
    Using \cref{eq:claim_projection}, we see that~\cref{eq:beta'-projection} follows, since from~\cref{def:S-transform-log}, the uniqueness of the $S$-transform gives $(SZ)(X_{\beta'}(z)) = G_{\beta\beta'}\varphi(z)$. Now~\cref{le:inversion} gives that $G_{(\beta')^2}:W^{-\frac{d-(\beta')^2}{2},2}(\T^d)\to W^{\frac{d-(\beta')^2}{2},2}(\T^d)$ is invertible and hence $\EX [Z_{\beta'} \textcolor{black}{\mu_\beta(\psi)}] = \EX [Z \textcolor{black}{\mu_\beta(\psi)}]$ holds if and only if $G_{\beta \beta'} \varphi = G_{(\beta')^2} \varphi_{\beta'}$. For the claim~\cref{eq:claim_projection} note that, by Lemma~\ref{le:inversion}, we have $G_{\beta\beta'}:W^{s + \beta\beta' - d,2}(\T^d) \to W^{s,2}(\T^d)$ for any $s\in \R$. In particular, this holds for $s = \frac{d}{2} - \frac{(\beta')^2}{2}$ yielding 
    $G_{\beta\beta'}:W^{-\frac{d}{2} - \frac{(\beta')^2}{2}+ \beta\beta',2}(\T^d) \to W^{\frac{d}{2} - \frac{(\beta')^2}{2},2}(\T^d)$. Now \cref{eq:claim_projection} follows by noting that $W^{-\frac{d-\beta^2}{2}}(\T^d) \subset W^{-\frac{d}{2} - \frac{(\beta')^2}{2}+ \beta\beta',2}(\T^d)$ since
    $$
    -\frac{d}{2} - \frac{(\beta')^2}{2}+ \beta\beta' \leq -\frac{d-\beta^2}{2}
    $$
    for arbitrary $\beta,\beta'\in \R$.  Hence,~\cref{eq:claim_projection} is proved.

\end{proof}

\section{Main result}\label{sec:mainresult}

In this section, we first present a heuristic computation which motivates the definition of the solution of our problem. Indeed, consider formally the stochastic PDE
\begin{equation} \label{eq:stochastic-pde}
	\begin{aligned}
		-\nabla \cdot (e^{ \diamond (-X_\beta)} \diamond \nabla U) & =\nabla \cdot (e^{\diamond (-X_\beta)} \diamond \mathbf{F}), \quad \textnormal{in } \mathbb{T}^d \\
		y \mapsto U(y) \textnormal{ is } \mathbb{T}^d              & \textnormal{-periodic},
	\end{aligned}
\end{equation}
where $X_\beta$ is the log-correlated Gaussian field with scaling parameter $\beta < \sqrt{d}$, and $\mathbf{F}=(F_1, \dots, F_d)$ is a random element with real-valued random variables as its components.

Now fix $y \in \mathbb{T}^d$, and take the $S$-transform of~\eqref{eq:stochastic-pde} on Gaussian  $Z$, which does not depend on $y$. Hence the equation becomes, using Fubini's Theorem and $S(X \diamond Y)(Z) = (SX)(Z)\cdot (SY)(Z)$,
\begin{align} \label{eq:pde-s-trans}
- \nabla \cdot (Se^{ \diamond (-X_\beta(y))}(Z) \cdot (S\nabla U(y))(Z)) = \nabla \cdot ( (Se^{ \diamond(- X_\beta(y))})(Z) \cdot (S\mathbf{F}(y))(Z) ) 
\end{align}
 Now, recall that since
\begin{align*}
	e^{\diamond (X+Y)} = e^{-\E XY} e^{\diamond X} e^{\diamond Y},
\end{align*}
and therefore $ (Se^{ \diamond (-X_\beta(y))})(Z) = e^{-\EX \left[ X_\beta(y) Z \right] }$, so that~\eqref{eq:pde-s-trans} becomes
\begin{align*}
	-\nabla \cdot (e^{-\E \left[ X_{\beta}(y) Z \right]}     (S\nabla U(y))(Z))= \nabla \cdot (e^{-\E \left[ X_{\beta}(y) Z \right]}(S\mathbf{F}(y))(Z)).
\end{align*}
Now, choosing $Z = X_\beta(z)$, for fixed $z \in \T^d$, this becomes
\begin{align} \label{eq:pde-s-transformed}
	-\nabla \cdot (e^{-\beta^2 R(y;z)}  \nabla u(y;z)   ) = \nabla \cdot (e^{-\beta^2 R(y;z)} \mathbf{f}(y;z)),
\end{align}
where we have denoted  $R(y;z) :=\E [X(y) X(z)]$, and $u(y;z) = (S U(y))(X_\beta(z))) = \E[ U(y) e^{\diamond X_\beta(z)}]$, which is well-defined by Definition~\ref{def:S-transform-log}, and $\mathbf{f}(y;z) = (S\mathbf{F}(y))(X_\beta(z))$.

Hence, we see that the problem in~\eqref{eq:stochastic-pde} has a deterministic counterpart on the $S$-transform side, parameterized by $z$, given by
\begin{equation} \label{eq:pde-deterministic-problem}
	\begin{aligned}
		-\nabla \cdot (w(y;z) \nabla u(y;z)   ) & = \nabla \cdot (w(y;z) \mathbf{f}(y;z)), \quad \textnormal{in } \mathbb{T}^d \\
		y \mapsto u(y;z) \textnormal{ is } \mathbb{T}^d & \textnormal{-periodic},
	\end{aligned}
\end{equation}
where 
$$
w_z(y):=w(y;z) = \abs{y-z}^{\beta^2}e^{-\beta^2 g(y-z)}
$$
with $g$ smooth and bounded. 
\begin{definition}
We say that \eqref{eq:pde-deterministic-problem} is solvable if, for a fixed $z\in \T^d$, there exists a unique (up to constant) function $u(\cdot;z) \in W^{1,2}(\T^d;w_z)$, where $W^{1,2}(\T^d;w_z)$ denotes the weighted Sobolev space (see \eqref{eq:weighted-Sobolev}), that satisfies \eqref{eq:pde-deterministic-problem}. 
\end{definition}

Based on the previous computation, we consider solutions of~\eqref{eq:stochastic-pde} in the following way:

\begin{definition}\label{def:pde-solution}
	Let $X_\beta$ be a a log-correlated Gaussian field with scaling parameter $0 < \beta < \sqrt{d}$. Assume that $\mathbf{f}(y;z) = (f_1(y;z),\ldots,f_d(y;z))$ satisfies, for every $z\in \T^d$ and $j=1,\ldots,d$, $f_j(\cdot;z)\in L^2(\T^d)$. We say that~\eqref{eq:stochastic-pde}  is solvable in the space $L_\beta^2(\Omega)$, whenever the solution of the $S$-transformed, deterministic PDE, for all $z \in \T^d$,
	\begin{equation}\label{eq:s-transformed-pde}
		\begin{aligned}
			-\nabla_y \cdot (w(y;z) \nabla_y u(y;z)   ) & = \nabla_y \cdot (w(y;z) \mathbf{f}(y;z)), \quad \textnormal{in } \mathbb{T}^d        \\
			y \mapsto u(y;z) \textnormal{ is } \mathbb{T}^d                           & \textnormal{-periodic},
		\end{aligned}
	\end{equation}
	satisfies, for any fixed $y\in \T^d$, $u(y;\cdot)\in W^{s_\beta,2}(\T^d)$.
 
\end{definition}
We note the asymmetric role of the variables $y$ and $z$ above in Definition~\ref{def:pde-solution}; while the equation, parameterized by $z$, contains derivatives only in the $y$-variable, in order to guarantee the existence of the inverse $G_{\beta^2}^{-1} u$, one needs regularity from $u$ in the $z$-variable, see Lemma~\ref{le:inversion}. 

We note that unpacking the definitions of the space $L_\beta^2(\Omega)$, the $S$-transform, and the operator $G$ in \cref{def:L2-GMC,def:S-transform-log} and equation \cref{eq:operator-G} respectively, \cref{def:pde-solution} yields us a representation of the solution as
\begin{align*}
	U(y) = \int_{\mathbb{T}^d} G_{\beta^2}^{-1}u(y;z)) d\mu_\beta(z) = \int_{\mathbb{T}^d} \varphi(y;z) d\mu_\beta(z), \qquad x \in \mathbb{T}^d
\end{align*}
where $\varphi \in W^{-s_\beta,2}(\mathbb{T}^d)$, with $s_\beta = \frac{d-\beta^2}{2}$, and $d\mu_\beta$ is a Gaussian multiplicative chaos measure with parameter $\beta < \sqrt{d}$. Moreover, \cref{lemma:S-uniqueness} guarantees uniqueness in this representation class -- although we remark that we have uniqueness only in the subspace $L_\beta^2(\Omega)$, not in the entirety of the ambient space $L^2(\Omega)$.

Next, we state our main result concerning the solvability of the periodic problem in~\eqref{eq:stochastic-pde}:
\begin{theorem}\label{thm:main-result}
	Let $X_\beta$ be a log-correlated Gaussian field with scaling parameter $0 < \beta < \sqrt{d}$, and let $\mathbf{F} \in \left(L^2(\Omega)\right)^d$ with $\mathbf{f}(y;z) := (S\mathbf{F}(y))(X_\beta(z))$ satisfying
 \begin{equation*}
		\sum_{0 \leq |\alpha_1+\alpha_2| \leq \lfloor \frac{d+1}{2} \rfloor} \norm{\partial_{y}^{\alpha_1} \partial_z^{\alpha_2} \mathbf f}_{L_z^p(\T^d;L_y^q(\T^d;w_z))} < \infty,
	\end{equation*}
	for some $q > 2d-\eps$ and $p > d/\gamma$, where $\gamma = \gamma(\beta,d,q) \in (0,1)$ and $\eps=\eps(\beta,d)>0$.
 
 Then the problem~\eqref{eq:stochastic-pde} admits a solution in the sense of Definition \ref{def:pde-solution}. In particular, there exists a function  $z \mapsto \varphi(y;z) = (G_{\beta^2}^{-1}u(y;\cdot))(z) \in W^{-s_\beta,2}(\mathbb{T}^d)$ with $s_\beta = \frac{d-\beta^2}{2}$ such that
	\begin{align*}
		U(y) = \int_{\mathbb{T}^d} \varphi(y;z) d\mu_\beta(z), \qquad y \in \mathbb{T}^d.
	\end{align*}
	Above $d\mu_\beta$ is a Gaussian multiplicative chaos measure with parameter $\beta$.
\end{theorem}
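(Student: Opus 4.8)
The plan is to carry the stochastic equation~\eqref{eq:stochastic-pde} onto the $S$-transform side, where it becomes the deterministic family~\eqref{eq:s-transformed-pde}, to solve those weighted elliptic equations, to prove that their solutions enjoy the regularity in the parameter $z$ that Definition~\ref{def:pde-solution} demands, and finally to invert $G_{\beta^2}$ to obtain the claimed representation. The one structural fact I would use throughout is that, for the exact field~\eqref{eq:field-definition}, the function $g$ in~\eqref{eq:general-cov} depends only on $y-z$, so the weight $w_z(y)=|y-z|_{\T^d}^{\beta^2}e^{-\beta^2 g(y-z)}$ is a genuine translate, $w_z(y)=W(y-z)$ with $W(\xi)=|\xi|_{\T^d}^{\beta^2}e^{-\beta^2 g(\xi)}$.

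First I would solve~\eqref{eq:s-transformed-pde} for each fixed $z\in\T^d$. Since $0<\beta^2<d$ and $g$ is smooth and bounded, $W$ (hence each $w_z$) is a Muckenhoupt $A_2$-weight, with $A_2$-constant independent of $z$; the relevant facts are gathered in the appendix. The weighted Poincaré inequality then holds in $H^1(\T^d;w_z)$, and as $w_z$ is a finite measure and $q>2$ we have $\mathbf f(\cdot;z)\in L^q(\T^d;w_z)\subset L^2(\T^d;w_z)$, so Lax--Milgram yields a unique weak solution $u(\cdot;z)\in W^{1,2}(\T^d;w_z)$, normalized by $\int_{\T^d}u(y;z)\,dy=0$ and depending measurably on $z$; this gives solvability of~\eqref{eq:pde-deterministic-problem}. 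For regularity in $y$ I would quote Section~\ref{sec:regularity}: away from the diagonal the equation is uniformly elliptic with smooth coefficients, while the $A_2$-structure gives De~Giorgi--Nash--Moser Hölder continuity near it, so $u(\cdot;z)\in C^{0,\gamma}(\T^d)$ with $\gamma=\gamma(\beta,d,q)\in(0,1)$, and moreover the weighted norms of the $y$-derivatives of $u(\cdot;z)$ up to order $\lfloor\frac{d+1}{2}\rfloor$ are controlled by the weighted $L^q_y$-norms of the data, provided $q>2d-\eps$. In particular $u(y;z)$ is well defined for every $y$.

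The core of the argument is the regularity in $z$, and here I would use the translation $v(\eta;z):=u(\eta+z;z)$. Because $w_z(y)=W(y-z)$, this turns~\eqref{eq:s-transformed-pde} into the \emph{single} equation $-\nabla_\eta\cdot\!\big(W\nabla_\eta v(\cdot;z)\big)=\nabla_\eta\cdot\!\big(W\,\mathbf g(\cdot;z)\big)$ with the $z$-independent weight $W$ and data $\mathbf g(\eta;z)=\mathbf f(\eta+z;z)$. The solution operator of this one fixed weighted equation is linear and bounded between the weighted spaces of Section~\ref{sec:regularity}, so differentiation in $z$ acts only on the data: $\partial_z^{\alpha_2}v(\cdot;z)$ solves the same equation with data $\partial_z^{\alpha_2}\mathbf g(\cdot;z)$, and the estimates of Section~\ref{sec:regularity} bound $\partial_\eta^{\alpha_1}\partial_z^{\alpha_2}v$ in the appropriate weighted $L^q_\eta$-norm by $\partial_z^{\alpha_2}\mathbf g$. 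Undoing the translation, $u(y;z)=v(y-z;z)$, so by the chain rule $\partial_z^{\alpha}u(y;\cdot)$ is a finite combination of terms $(\partial_\eta^{\alpha_1}\partial_z^{\alpha_2}v)(y-z;z)$ with $|\alpha_1|+|\alpha_2|=|\alpha|\le\lfloor\frac{d+1}{2}\rfloor$ --- which is precisely why the hypothesis controls the mixed derivatives $\partial_y^{\alpha_1}\partial_z^{\alpha_2}\mathbf f$ of that total order. Integrating also in $y$ and changing variables $\eta=y-z$ converts these weighted $L^q_\eta$-bounds into unweighted $L^2$-bounds in $(\eta,z)$; here the size $q>2d-\eps$ is used through Hölder's inequality to absorb the negative power of $W$, and the $z$-integrability $p>d/\gamma$ together with the Hölder exponent $\gamma$ in $y$ upgrades the almost-everywhere-in-$y$ conclusion to one valid for every $y$. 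Since $\lfloor\frac{d+1}{2}\rfloor\ge s_\beta=\frac{d-\beta^2}{2}$, Sobolev embedding in the $z$-variable then yields $u(y;\cdot)\in W^{s_\beta,2}(\T^d)$ for every $y\in\T^d$, which is exactly what Definition~\ref{def:pde-solution} requires.

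Finally, having $u(y;\cdot)\in W^{s_\beta,2}(\T^d)$, I would apply Lemma~\ref{le:inversion} with $\alpha=\beta^2$ and $s=s_\beta$ (so that $s+\alpha-d=-s_\beta$) to set $\varphi(y;z):=\big(G_{\beta^2}^{-1}u(y;\cdot)\big)(z)\in W^{-s_\beta,2}(\T^d)$ and $U(y):=\int_{\T^d}\varphi(y;z)\,d\mu_\beta(z)\in L^2_\beta(\Omega)$; unpacking Definitions~\ref{def:L2-GMC} and~\ref{def:S-transform-log} shows $(SU(y))(X_\beta(z))=u(y;z)$, so $U$ solves~\eqref{eq:stochastic-pde} in the sense of Definition~\ref{def:pde-solution}, with the stated representation, and Lemma~\ref{lemma:S-uniqueness} gives uniqueness within $L^2_\beta(\Omega)$. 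I expect the main obstacle to be the third paragraph and the weighted elliptic estimates of Section~\ref{sec:regularity} underpinning it: one must establish those estimates with constants uniform in the parameter $z$ and with the exponents $(p,q,\gamma)$ in a relation sharp enough to survive both the chain rule after un-translating and the de-weighting required to reach the unweighted $W^{s_\beta,2}$-norm in $z$.
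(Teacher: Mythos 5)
Your overall architecture — pass to the $S$-transform side, solve the weighted equation \eqref{eq:s-transformed-pde} for each fixed $z$, prove $z$-regularity, then invert $G_{\beta^2}$ via Lemma~\ref{le:inversion} — coincides with the paper's proof. The existence step (Lax--Milgram with the uniform $A_2$-constant) and the final inversion step match. However, the third paragraph, which you correctly flag as the crux, contains a genuine gap, and the paper's proof of Theorem~\ref{t.torus.reg} takes a fundamentally different and more delicate route precisely to avoid the issue your method runs into.

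Your translation $v(\eta;z):=u(\eta+z;z)$ does give a fixed-coefficient equation, and $\partial_z^{\alpha_2}v$ does solve it with data $\partial_z^{\alpha_2}\mathbf g$. The problem is the un-translation: $\partial_z^\alpha u(y;z)=\sum_{\alpha_1+\alpha_2=\alpha}(-1)^{|\alpha_1|}\binom{\alpha}{\alpha_1}(\partial_\eta^{\alpha_1}\partial_z^{\alpha_2}v)(y-z;z)$ necessarily produces $\eta$-derivatives of $v$ of order up to $|\alpha|\le\lfloor\frac{d+1}{2}\rfloor$, evaluated arbitrarily close to the pole $\eta=0$ of the fixed weight $W$. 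For a degenerate elliptic equation with $A_2$-weight $W(\eta)\sim|\eta|^{\beta^2}$, the solution is generically only Hölder continuous near $\eta=0$ (by the De Giorgi--Nash--Moser theory of \cite{FKS}, i.e.~Lemma~\ref{l.holder}), and its derivatives of order $k\ge 1$ blow up there; Lemma~\ref{l.grad.nopole} provides a gradient estimate only on balls at positive distance from the pole, and even the Green function gradient bound \eqref{e.green.bdd} carries the factor $(|x-y|\wedge|y-z|)^{-1}$. There is no available estimate bounding $\partial_\eta^{\alpha_1}v$ in $L^q_\eta(\T^d;W)$ for $|\alpha_1|\ge 2$, and the statement in your proposal that ``the weighted norms of the $y$-derivatives of $u(\cdot;z)$ up to order $\lfloor\frac{d+1}{2}\rfloor$ are controlled by the weighted $L^q_y$-norms of the data'' is not a consequence of Section~\ref{sec:regularity}, nor is it true in general.

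The paper's proof of Theorem~\ref{t.torus.reg} is designed exactly to avoid differentiating $u$ in $y$ near the pole. The pole-shifting map $\Phi_{z^\ast}(x)=x+\eta(x-z^\ast)h$ is the identity outside $B_{2r}(z^\ast)$ (in particular at the evaluation point $x^\ast$, since $|x^\ast-z^\ast|\ge 16r$) and a pure translation $x\mapsto x+h$ inside $B_r(z^\ast)$, so the corrector $v^{(\alpha)}(x^\ast;z^\ast)$ is a pure $z$-derivative of $u$ at $(x^\ast,z^\ast)$ with \emph{no} $y$-derivatives entering near $z^\ast$. Crucially, the resulting corrector equation \eqref{e.valphaeq} has a right-hand side whose coefficients $\mathcal E^{(\alpha)}$ vanish on $B_r(z^\ast)$ (see \eqref{e.coeff.bdd}), which is what makes the Green-function representation in Step~3 converge: the $(|x-y|\wedge|y-z|)^{-1}$ factor of $\nabla_y G$ is integrated only over regions where the integrand vanishes near $z^\ast$. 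The outcome is the pointwise bound $|\partial_z^\alpha u(x;z)|\lesssim|x-z|^{\gamma-|\alpha|}\cdot(\text{data norms})$ in \eqref{e.final}, a controlled rate of blow-up that is then integrated in $z$ (with Brezis--Mironescu interpolation for odd $d$) to reach $H^{d/2}$ in $z$. Your global translation collapses this surgical localization into a chain rule that mixes $y$- and $z$-derivatives and requires exactly the regularity-at-the-pole that the paper's argument is built to circumvent.
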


\begin{remark}
    Before giving the proof of Theorem \ref{thm:main-result}, we make some remarks on the right-hand side $\mathbf{f}$:
    \begin{itemize}
        \item In the simple case when $\mathbf{F}$ is deterministic (hence $\mathbf{f}$ is independent of the $z$-variable) and has sufficient regularity in the $y$-variable, the theorem guarantees the existence of a solution.
        \item More generally, consider now some non-deterministic $\mathbf{F}$, which belongs to $\left(L_\beta^2(\Omega)\right)^d$ (or its orthogonal projection), that is, $\mathbf{F}$ is of the form
        \begin{align*}
            \mathbf{F}(y) = \int_{\T^d} \varphi(y,z) d\mu_\beta(z),
        \end{align*}
        for some function $\varphi(y,\cdot) \in \left(W^{\eta,2}(\T^d)\right)^d$. Hence $(S\mathbf{F})(X_\beta(z)) = G_{\beta^2}(\varphi(y, \cdot))(z)$, which has mapping properties to $\left(W^{\gamma,2}(\T^d)\right)^d$ given by Lemma \ref{le:inversion}. Hence provided sufficient regularity in both $y$- and $z$-variables for $\varphi(y,z)$, the theorem guarantees the existence of a solution.
    \end{itemize}
\end{remark}

\begin{proof}[Proof of Theorem \ref{thm:main-result}]
	We start by noting that it is a general fact of weighted degenerate elliptic partial differential equations that~\eqref{eq:s-transformed-pde} admits a solution $u$ in the weighted Sobolev space $W^{1,2}(\mathbb{T}^d; w_z)$, where $w(y;z)=\abs{y-z}^{\beta^2}e^{-\beta^2 g(y-z)}$~\cite[Theorem 2.2.]{FKS}. In order for $u$ to satisfy Definition~\ref{def:pde-solution}, we need sufficient regularity for $u$ in the $z$-variable. Indeed, we show that $z \mapsto u(y;z) \in W^{s_\beta,2}(\mathbb{T}^d)$ uniformly in $y$, where $s_\beta = \frac{d-\beta^2}{2}$, and then invert via the operator $G$ in Lemma~\ref{le:inversion}.

	By Theorem~\ref{t.torus.reg} below we have that $z \mapsto u(y;z) \in W^{d/2,2}(\mathbb{T}^d)$ uniformly in $y$. Recall that the space $W^{d/2,2}(\mathbb{T}^d)$ embeds into $W^{s_\beta,2}(\mathbb{T}^d)$ continuously (see, for example~\cite[Theorem 1, p.82]{RunstSickel1996}). By Lemma~\ref{le:inversion} the operator $G_{\beta^2}$ is a bijection between $W^{-s_\beta,2}(\mathbb{T}^d)$ and $W^{s_\beta,2}(\mathbb{T}^d)$, and therefore there exists a unique element $\varphi$ of $W^{-s_\beta,2}(\mathbb{T}^d)$ such that $(G_{\beta^2}^{-1} u(y;\cdot))(z) = \varphi(y;z)$ for all $y \in \mathbb{T}^d$; and hence
	\begin{align*}
		U(y) = \int_{\mathbb{T}^d} \varphi(y;z) d\mu_\beta(z), \qquad y \in \mathbb{T}^d
	\end{align*}
	is a solution of~\eqref{eq:stochastic-pde}, where $d\mu_\beta$ is the GMC measure with parameter $\beta < \sqrt{d}$ on $\mathbb{T}^d$.
	Finally, uniqueness of the solution in the space $L_\beta^2(\Omega)$ is immediate from the bijectivity of $G_{\beta^2}$. This completes the proof.
\end{proof}

\section{Regularity of solutions of the deterministic PDE}
\label{sec:regularity}

In this section, we prove the required regularity result for the deterministic equation, \cref{t.torus.reg}, used in the proof of our main result. Unless stated otherwise,
throughout we let $w_z(\cdot):=w(\cdot;z)$ denote a periodic function in $\T^d$ that can be written as $w(x;z) = w(x-z)$ for $x,z \in \T^d$, satisfying $w(x) \sim |x|^{\beta^2}$ for $x \in \T^d$, for some $0 < \beta^2 < d$. In particular, this is the case for the weight $w_z(y)=|y-z|^{\beta^2}e^{-\beta^2 g(y-z)}$. Here and in the sequel, we identify $\T^d$ with $[-1,1]^d$ with periodic boundary conditions. \textcolor{black}{As our distance we use the distance introduced in \cref{subsec:notation}, that behaves as Euclidean distance on small scales. As in what follows we are essentially interested only in local behavior around the singular pole, for notational simplicity we work only with the Euclidean distance.}

We first remark that the weight $\widehat{w} = |x|^{\beta^2}$ is a Muckenhoupt-$A_2$ (see \cite{FKS} and \cref{a.muckenhoupt}). Specifically the estimate
\begin{align*}
	\sup_B \frac{1}{|B|}\int_{B} \widehat{w} dx \left (\frac{1}{|B|} \int_B \widehat{w}^{-1} dx\right ) \leq \frac{1}{d^2-\beta^4}
\end{align*}
holds true, where the supremum is taken over all Euclidean balls. Although we will not use this form of the estimate, it is worth noting that $\widehat{w}$ is a Muckenhoupt-$A_2$ exactly when $\beta^2 < d$. Let $A_2(w)$ denote the $A_2$ constant (see~\cite{FKS}). Then from the above estimate we obtain $A_2(w) \leq \frac{C(w)}{d^2-\beta^4}$.

Let next $z \in \T^d$ be fixed and recall that the weighted Sobolev space $W^{m,p}(\Omega;w_z)$ for $1 \leq p < \infty$ is given by the completion of  $C^\infty(\Omega)$ under
\begin{align*}
	\|u\|_{W^{m,p}(\Omega;w_z)}
	:= \|u\|_{L^p(\Omega;w_z)} + \sum_{1 \leq |\alpha| \leq m} \|\partial_x^\alpha u\|_{L^p(\Omega;w_z)}.
\end{align*}
In the sequel, we use notation $d\textcolor{black}{\nu}_z = w(x;z) dx$ and if there is no danger of confusion, we will suppress the $z$ and write simply $w$ or $d\textcolor{black}{\nu}$ instead. 

For our proofs, we also define the Euclidean balls with center at $x$ as $B_r(x) := \{y \in \R^d : |y-x| < r\}$, and $B_r := B_r(0)$ for short. Finally, for an $A_2$ weight $w$ we define the mass of a set $E \subset \R^d$ as $w(E) = \int_E w(x) dx$. The weighted normalized norms for balls $B_r(x) \subset \T^d$ as
\begin{align*}
	\|u\|_{\underline{W}^{m,p}(B_r(x);\, w_z)}
	:=
	\sum_{0 \leq |\alpha| \leq m} \left ( \fint_{B_r(x)} |\partial^\alpha u|^p d\textcolor{black}{\nu}_z \right )^{\frac{1}{p}},
\end{align*}
where the average in the integral is with respect to the measure $d\textcolor{black}{\nu}_z$.
The norm $\underline{L}^p$ is defined similarly. 


Our main result of this section is the following regularity result. 
\begin{theorem} \label{t.torus.reg}
	Let $u(\cdot;z) \in H^{1}(\T^d;w)$ be a weak solution to the equation
	\begin{align*}
		\begin{cases}
			- \nabla \cdot (w(\cdot; z) \nabla u(\cdot; z)) = \nabla \cdot (w(\cdot; z) \mathbf{f}(\cdot;z)), & \text{ in $\T^d$,} \\
			\int_{\T^d} u(x;z) dx = 0,
		\end{cases}
	\end{align*}
	for every $z \in \T^d$.
	Then, there exists $\eps = \eps(\beta,d)>0$ such that if 
	\begin{equation*}
		\sum_{0 \leq |\alpha_1+\alpha_2| \leq \lfloor \frac{d+1}{2} \rfloor} \norm{\partial_{x}^{\alpha_1} \partial_z^{\alpha_2} \mathbf f}_{L_z^p(\T^d;L_x^q(\T^d;w_z))} < \infty,
	\end{equation*}
	for $q > 2d-\eps$ and $p > d/\gamma$ where $\gamma = \gamma(\beta,d,q) \in (0,1)$. Then 
	\begin{align}\label{e.regularity}
		u(x;\cdot) \in H^{d/2}(\T^d),
	\end{align}
	uniformly in $x$.
\end{theorem}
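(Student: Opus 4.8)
The plan is to establish regularity of $z \mapsto u(x;z)$ by first obtaining suitable regularity (in the $x$-variable) of the solution to the weighted equation with $A_2$-Muckenhoupt weight $w_z$, and then "transferring" $x$-derivatives into $z$-derivatives by exploiting the translation structure $w(x;z) = w(x-z)$. The key observation is that, since the weight depends only on $x-z$, differentiating the equation formally in $z$ and in $x$ produces the same type of equation (with translated right-hand sides), so estimates that are uniform in the base point can be bootstrapped. Concretely, I would proceed in the following steps.

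First, I would record the basic Sobolev-type and Moser/De Giorgi-type regularity for weak solutions of $-\nabla\cdot(w_z \nabla u) = \nabla\cdot(w_z \mathbf f)$ with $w_z \in A_2$: by the Fabes--Kenig--Serapioni theory \cite{FKS}, together with a weighted reverse-H\"older / higher-integrability (Gehring) argument, one gets that $\nabla u(\cdot;z) \in \underline{L}^{q'}_{\loc}(\cdot; w_z)$ for some $q' > 2$ when $\mathbf f(\cdot;z) \in \underline{L}^{q}(\cdot;w_z)$, with constants depending only on $\beta$, $d$, and the $A_2$ constant (which by the displayed bound is controlled by $1/(d^2-\beta^4)$). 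Since the only singularity of $w_z$ is the pole at $z$ and $w_z$ is smooth and bounded away from it, away from the pole this is classical Calder\'on--Zygmund/Schauder theory; near the pole one uses the weighted estimates. This yields a quantitative, base-point-uniform modulus of continuity of $x\mapsto u(x;z)$ of some H\"older exponent $\gamma = \gamma(\beta,d,q)\in(0,1)$, via the weighted Morrey/Sobolev embedding once $q$ is large enough — this is where the hypothesis $q > 2d - \eps$ enters, ensuring the exponent $\gamma$ is admissible.

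Second — the heart of the argument — I would transfer regularity from $x$ to $z$. Because $w(x;z)=w(x-z)$, for a fixed $x$ the map $z\mapsto u(x;z)$ solves (in the $z$-variable, after the change of variables $y = x-z$) an equation of the same structure, and more importantly difference quotients in $z$ of $u$ satisfy the equation with right-hand side given by difference quotients of $\mathbf f$ together with commutator terms coming from $\nabla w_z$. Iterating the interior estimate from Step 1 on these difference-quotient equations — exactly $\lfloor \frac{d+1}{2}\rfloor$ times, matching the number of derivatives assumed on $\mathbf f$ in the hypothesis — upgrades H\"older continuity in $z$ to membership in $W^{k,2}_z$ for $k = \lfloor \frac{d+1}{2}\rfloor$, and then a fractional (Besov/Slobodeckij) interpolation between the integer-order bound and the H\"older bound of exponent $\gamma$ lands $u(x;\cdot)$ in $H^{d/2}(\T^d)$; the requirement $p > d/\gamma$ on the $L^p_z$ integrability of the data is what makes the fractional-order count close at precisely $d/2$. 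Uniformity in $x$ follows because every estimate used has constants independent of the fixed parameter (the "frozen" variable).

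The main obstacle I expect is \emph{controlling the interaction of the two singularities as $z\to x$}: when one differentiates in $z$, the factor $\nabla_z w_z = -\nabla w(x-z)$ behaves like $|x-z|^{\beta^2-1}$, which is not in $A_2$ on its own, so the commutator terms in the difference-quotient equations are genuinely more singular than the leading term. The resolution is to keep these terms on the right-hand side and absorb them using the weighted Hardy/Poincar\'e inequality valid for $A_2$ weights (the factor $|x-z|^{-1}$ relative to $w_z = |x-z|^{\beta^2}$ is exactly borderline-admissible because $\beta^2 < d$), together with the higher integrability of $\nabla u$ from Step 1 to gain the extra room; tracking how the allowable exponents degrade at each of the $\lfloor\frac{d+1}{2}\rfloor$ iterations is the delicate bookkeeping that forces the precise thresholds $q>2d-\eps$ and $p>d/\gamma$ in the statement. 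A secondary technical point is that $\T^d$ is identified with a cube with periodic boundary, so one must patch the local (interior) estimates near the pole with global elliptic estimates on the torus away from the pole via a partition of unity, but this is routine once the local near-pole estimate is in hand.
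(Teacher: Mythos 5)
You correctly identify the structural observation the whole argument hinges on, namely that $w(x;z)=w(x-z)$, and your Step~1 (weighted De~Giorgi/Gehring to get H\"older decay of $\nabla u$ near the pole) is essentially the content of Lemmas~\ref{l.holder} and~\ref{l.holder2}. However, your Step~2 is precisely the naive route that the paper's proof opens by \emph{rejecting}: differentiating the equation in $z$ produces a right-hand side carrying $\partial_{z_i}w_z\,\nabla u$, and since $\partial_{z_i}w_z\sim|x-z|^{\beta^2-1}$, the relative density $\partial_{z_i}w_z/w_z\sim|x-z|^{-1}$ is \emph{not} controllable in $L^2(w_z)$ against the a~priori information $\nabla u\in L^2(w_z)$. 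The weighted Hardy/Poincar\'e inequality you invoke bounds $\int|x-z|^{\beta^2-2}|v|^2$ by $\int|x-z|^{\beta^2}|\nabla v|^2$, i.e.\ it trades a negative power of distance for one more derivative of $v$; it does \emph{not} apply to $|\nabla u|^2$ itself, and even the H\"older decay $\fint_{B_\varrho(z)}|\nabla u|^2\,d\mu_z\lesssim\varrho^{2\alpha-2}$ from Lemma~\ref{l.holder} gives $\int_{B_\varrho(z)}|x-z|^{\beta^2-2}|\nabla u|^2\,dx$ only if $d+\beta^2+2\alpha-4>0$, which is a genuine constraint in low dimensions and, more fatally, degrades at every one of the $\lfloor(d+1)/2\rfloor$ iterations you need. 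So the ``delicate bookkeeping'' you defer does not close.

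What the paper actually does to bypass this is the pole-shifting diffeomorphism $\Phi_{z^\ast}(x)=x+\eta(x-z^\ast)h$ of Step~1 of its proof: because $\Phi_{z^\ast}$ is a pure translation by $h$ on $B_r(z^\ast)$, the pulled-back weight $\widehat w=w(\Phi_{z^\ast}(\cdot);z^\ast+h)$ \emph{equals} $w_{z^\ast}$ on $B_r(z^\ast)$, so the coefficient matrix $\mathbf b$ comparing $\widehat u$ to $u_{z^\ast}$ vanishes identically in $B_r(z^\ast)$. All singular commutator terms are thereby killed near the pole, at the modest price of smooth, $r^{-|\alpha|}$-sized coefficients supported away from the pole. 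One then runs a Taylor expansion in $h$ (the corrector equations~\eqref{e.valphaeq}), proves $L^\infty$ bounds on the correctors $v^{(\alpha)}$ via Green-function representation and the bounds of Lemma~\ref{l.green}, and only at the very end converts the pointwise bound $|\partial_z^\alpha u(x;z)|\lesssim|x-z|^{\gamma-|\alpha|}$ into an $H^{d/2}$ bound in $z$ (with Brezis--Mironescu interpolation for odd $d$). That geometric device is the missing idea in your proposal; without it, the difference-quotient scheme in $z$ does not produce right-hand sides that live in the dual of $H^1(w_z)$, and the induction does not get off the ground.
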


We will prove this theorem by using regularity theory for weighted equations, see~\cite{FJK,FKS}, which we record in the next subsection.

Note that from $w\in L^\infty(\T^d)$, it follows that $W^{d/2,q}(\T^d;dx)\subset W^{d/2,q}(\T^d;w)$ by basic estimates as in~\cite[Lemma 1.13]{HKM}.

\subsection{Basic regularity and Green function bounds}

\begin{lemma} \label{l.holder}
	Let $w$ be an $A_2$ weight and let $u \in H^1(B_1;w)$ be a weak solution in $B_1$ of
	\begin{equation*}
		-\nabla \cdot (w \nabla u) = \nabla \cdot (w \mathbf f).
	\end{equation*}
	There exists $\eps = \eps(A_2(w)) > 0$ such that if $f \in L^q(B_1, \R^d;w)$ for $q > 2d-\eps$, then $u$ is Hölder continuous. Specifically, for $\varrho < 1$ there exists a constant $C = C(A_2(w),d) > 1$ and an exponent $\alpha = \alpha(A_2(w),d) \in (0,1)$ such that
	\begin{equation}\label{e.holder.1}
		\sup_{B_\varrho} |u| + \frac{\osc_{B_\varrho} u}{\varrho^\alpha} \leq C \left ( \norm{u}_{\underline{L}^2(B_1;w)} + \norm{\mathbf f}_{\underline{L}^q(B_1;w)} \right ).
	\end{equation}
	Furthermore, it holds
	\begin{equation}\label{e.holder.2}
		\fint_{B_\varrho} |\nabla u|^2 d\textcolor{black}{\nu} 
		\lesssim 
		\varrho^{2\alpha-2} \left (\norm{\mathbf f}^2_{L^q(B_1;w)} + \fint_{B_1} |\nabla u|^2 d\textcolor{black}{\nu} \right ),
	\end{equation}
	where $d\textcolor{black}{\nu} = w dx$.
\end{lemma}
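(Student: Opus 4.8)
The plan is to prove this by invoking the De Giorgi--Nash--Moser theory for degenerate elliptic equations with $A_2$ weights, developed by Fabes, Kenig, and Serapioni in \cite{FKS} (and the companion paper \cite{FJK}), but with a careful treatment of the inhomogeneity $\nabla\cdot(w\mathbf f)$. The point is that the classical results in \cite{FKS} are stated for the homogeneous equation $-\nabla\cdot(w\nabla u)=0$, so the first step is to reduce the inhomogeneous problem to the homogeneous one. Since $w\in A_2$, one has the weighted Sobolev inequality $\left(\fint_{B_r}|v|^{2\kappa}\,d\mu\right)^{1/\kappa}\lesssim r^2\fint_{B_r}|\nabla v|^2\,d\mu$ for some $\kappa=\kappa(A_2(w),d)>1$ and for $v\in H^1_0(B_r;w)$. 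Testing the equation with $u-u_{B_r}$ (cut off appropriately) and using Cauchy--Schwarz together with this Sobolev inequality, the term $\int w\,\mathbf f\cdot\nabla\varphi$ is controlled by $\norm{\mathbf f}_{\underline L^q(B_r;w)}$ provided $q$ is large enough that $2\kappa'\le q$ after accounting for the weighted measure of $B_r$; this is precisely where the hypothesis $q>2d-\eps$ enters, with $\eps=\eps(A_2(w))$ chosen so that the gain $\kappa>1$ from the weighted Sobolev inequality is not destroyed by Hölder's inequality against the inhomogeneity. This gives a Caccioppoli-type estimate with a forcing term, from which a reverse Hölder inequality for $|\nabla u|^2\,d\mu$ follows by a standard self-improvement (Gehring) argument.

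The second step is the Morrey/Campanato-type iteration. Write $u = v + h$ on each ball $B_r$, where $h$ solves the homogeneous equation $-\nabla\cdot(w\nabla h)=0$ in $B_r$ with $h=u$ on $\partial B_r$, and $v\in H^1_0(B_r;w)$ absorbs the inhomogeneity. For $h$ one has the interior decay estimate from \cite{FKS}, namely $\fint_{B_{\theta r}}|\nabla h - (\nabla h)_{B_{\theta r}}|^2\,d\mu \lesssim \theta^{2\sigma}\fint_{B_r}|\nabla h|^2\,d\mu$ for some $\sigma\in(0,1)$ (equivalently, the oscillation decay $\osc_{B_{\theta r}}h\lesssim\theta^\sigma\osc_{B_r}h$), while for $v$ the energy estimate from Step 1 gives $\fint_{B_r}|\nabla v|^2\,d\mu\lesssim r^{2\alpha'}\norm{\mathbf f}^2_{L^q(B_1;w)}$ after controlling the weighted measure of the ball in terms of its radius (here one uses that $w\sim|x|^{\beta^2}$ is comparable to a power weight, so $\mu(B_r)\sim r^{d+\beta^2}$ up to constants, although only the doubling property and the $A_2$ bound are strictly needed). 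Combining these two pieces and running the usual Morrey iteration lemma yields $\fint_{B_\varrho}|\nabla u|^2\,d\mu\lesssim\varrho^{2\alpha-2}(\norm{\mathbf f}^2_{L^q(B_1;w)}+\fint_{B_1}|\nabla u|^2\,d\mu)$ for a possibly smaller exponent $\alpha=\min(\sigma,\alpha')\in(0,1)$, which is \eqref{e.holder.2}. The Hölder bound \eqref{e.holder.1} then follows from \eqref{e.holder.2} via the weighted Poincaré inequality $\osc_{B_\varrho}u\lesssim$ (a Morrey embedding) $\lesssim \varrho^{\alpha}$, together with the weighted local boundedness (Moser iteration, again from \cite{FKS}) to control $\sup_{B_\varrho}|u|$ by $\norm{u}_{\underline L^2(B_1;w)}+\norm{\mathbf f}_{\underline L^q(B_1;w)}$.

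The main obstacle I expect is bookkeeping the interplay between the weighted Sobolev exponent $\kappa$ (which depends only on $A_2(w)$ and $d$, hence only on $\beta$ and $d$ in our application) and the integrability exponent $q$ of the forcing: one must verify that the threshold $2d-\eps$ is the correct one, i.e. that $\eps$ can be taken to depend only on $A_2(w)$, and that all constants in the Caccioppoli and reverse-Hölder steps are uniform. A secondary technical point is making the decomposition $u=v+h$ rigorous in the weighted space $H^1_0(B_r;w)$ — existence of $h$ follows from the Lax--Milgram theorem applied to the weighted Dirichlet form, and the trace/extension issues are handled as in \cite{FKS} since $w\in A_2$ guarantees $C^\infty(\overline{B_r})$ is dense. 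Everything else is a routine adaptation of the unweighted De Giorgi--Nash--Moser–plus–Campanato scheme to the $A_2$-weighted setting, for which \cite{FKS,FJK} provide all the required ingredients.
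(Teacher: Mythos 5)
Your proposal takes a genuinely different route from the paper and contains a gap in a key intermediate step. The paper proves \eqref{e.holder.1} by directly citing the inhomogeneous De Giorgi--Nash--Moser theorem for $A_2$-weighted equations, namely Theorem~2.3.15 of \cite{FKS}, which already handles the forcing term $\nabla\cdot(w\mathbf f)$ with $\mathbf f\in L^q(B_1;w)$; it then derives \eqref{e.holder.2} from \eqref{e.holder.1} by a single Caccioppoli argument (test with $(u(x)-u(0))\phi^2$) combined with the weighted Poincar\'e inequality and H\"older's inequality on the forcing term. You instead try to build \eqref{e.holder.2} from scratch by harmonic replacement and a Campanato-style iteration, and then deduce \eqref{e.holder.1}.

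The gap: you assert the Campanato-type gradient decay
$\fint_{B_{\theta r}}|\nabla h - (\nabla h)_{B_{\theta r}}|^2\,d\mu \lesssim \theta^{2\sigma}\fint_{B_r}|\nabla h|^2\,d\mu$
for the homogeneous comparison solution $h$, and claim it is ``equivalent'' to the oscillation decay $\osc_{B_{\theta r}}h\lesssim\theta^\sigma\osc_{B_r}h$. These are not equivalent, and the first one is false for general $A_2$ weights: it would imply $C^{1,\sigma}$ regularity of $h$, whereas solutions of $\nabla\cdot(w\nabla h)=0$ with a degenerate weight such as $w(x)=|x|^{\beta^2}$ need not even have locally bounded gradient near the pole. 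What \cite{FKS} actually gives is H\"older continuity of $h$ itself, i.e.\ the oscillation decay, which via Caccioppoli translates into the weaker estimate $\fint_{B_{\theta r}}|\nabla h|^2\,d\mu\lesssim\theta^{2\sigma-2}\fint_{B_r}|\nabla h|^2\,d\mu$. That weaker decay is exactly what is needed, and what the paper uses in the related Lemma~\ref{l.holder2} (which is precisely the harmonic-replacement argument you propose, applied for a different purpose and relying on the present lemma for the homogeneous decay). If you replace your claimed estimate by the correct $\theta^{2\sigma-2}$ decay, the iteration yields \eqref{e.holder.2} as stated.

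Two further remarks. First, even with the iteration done correctly you still have to invoke Moser iteration from \cite{FKS} for the $\sup_{B_\varrho}|u|$ bound in \eqref{e.holder.1}, so your route does not actually avoid the FKS machinery; it only re-derives an $L^2$-gradient decay that the paper obtains almost for free once \eqref{e.holder.1} is in hand. Second, your reversed logical order (prove \eqref{e.holder.2} first, then \eqref{e.holder.1}) is workable but longer: the paper's derivation of \eqref{e.holder.2} from \eqref{e.holder.1} is a short self-contained computation, whereas deriving \eqref{e.holder.1} from \eqref{e.holder.2} requires both a Morrey-type embedding for the oscillation and a separate local-boundedness argument for the supremum.
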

\begin{proof}
	In the following we assume without loss of generality that $\int_{B_1} u d\textcolor{black}{\nu} = 0$, and we note that estimate \cref{e.holder.1} is just~\cite[Theorem 2.3.15]{FKS}.

	To prove \cref{e.holder.2}, note that \cref{e.holder.1} and the weighted Poincaré inequality, see~\cite{FKS}, gives that for $|x| < 1/2$ we have
	\begin{align} \label{e.holder1}
		\notag
		|u(x) - u(0)|^2
		 & \lesssim
		|x|^{2\alpha} \left ( \|u\|^2_{\underline{L}^2(B_1;w)} + \norm{\mathbf f}^2_{\underline{L}^q(B_1;w)} \right )
		\\
		 & \lesssim
		|x|^{2\alpha} \left ( \|\nabla u\|^2_{\underline{L}^2(B_1;w)} + \norm{\mathbf f}^2_{\underline{L}^q(B_1;w)} \right ).
	\end{align}
	Now take a cutoff function $\chi_{B_\varrho} \leq \phi \leq \chi_{B_{2\varrho}}$ with $\abs{\nabla \phi} \lesssim \frac{1}{\varrho}$, and test the equation with $\psi = (u(x) - u(0))\phi^2$. We get
	\begin{multline*}
		2\int  \nabla u \cdot \nabla  \phi\cdot \phi (u(x)-u(0)) d\textcolor{black}{\nu} + \int |\nabla u|^2 \phi^2 d\textcolor{black}{\nu}
		\\
		=
		2\int \mathbf f \cdot \nabla \phi \cdot \phi (u(x) - u(0)) d\textcolor{black}{\nu} + \int \mathbf f \cdot \nabla u \phi^2 d\textcolor{black}{\nu}
		.
	\end{multline*}
	Using Young's inequality for products, we get the classical Caccioppoli estimate
	\begin{equation*}
		\int |\nabla u|^2 \phi^2 d\textcolor{black}{\nu}
		\lesssim
		\int \abs{\mathbf f}^2 \phi^2 d\textcolor{black}{\nu}
		+\int |\nabla \phi|^2 (u(x) - u(0))^2 d\textcolor{black}{\nu}.
	\end{equation*}
	Now
	\begin{equation*}
		\int |u(x) - u(0)|^2 |\nabla \phi|^2 d\textcolor{black}{\nu}
		\lesssim \frac{\sup_{x \in B_{2\varrho}} |u(x) - u(0)|^2}{\varrho^2} w(B_{2\varrho}),
	\end{equation*}
	and, using Hölder's inequality,
	\begin{equation*}
		\int_{B_\varrho} \abs{\mathbf f}^2 d\textcolor{black}{\nu} \leq w(B_\varrho)^{\frac{q-2}{q}} \left ( \int_{B_\varrho} \abs{\mathbf f}^q d\textcolor{black}{\nu} \right)^{2/q}.
	\end{equation*}
	Consequently, 
	\begin{equation*}
		\fint_{B_\varrho} |\nabla u|^2 d\textcolor{black}{\nu}
		\lesssim
		w(B_{2\varrho})^{\frac{q-2}{q}-1} \left ( \int_{B_{2\varrho}} \abs{\mathbf f}^q d\textcolor{black}{\nu} \right)^{2/q}
		+\frac{\sup_{x \in B_{2\varrho}} |u(x) - u(0)|^2}{\varrho^2}.
	\end{equation*}
	Finally, by \cref{e.holder1} there exists another $\gamma = \gamma(A_2(w),d) \in (0,1)$ such that
	\begin{equation*}
		\fint_{B_\varrho} |\nabla u|^2 d\textcolor{black}{\nu} \lesssim \varrho^{2\gamma-2} \left (\norm{\mathbf f}^2_{L^q(B_1;w)} + \fint_{B_1} |\nabla u|^2 d\textcolor{black}{\nu} \right ).
	\end{equation*}
	This completes the proof.
\end{proof}

If we already know a bound on the $\norm{\mathbf f}_{\underline{L}^2(B_\varrho;w)}$ norm in terms of powers of $\varrho$, we can bypass the $L^q$ norm, and obtain the same regularity as in \cref{l.holder}:

\begin{lemma} \label{l.holder2}
	Let $w$ be an $A_2$ weight and let $u \in H^1(B_1;w)$ be a weak solution to
	\begin{equation} \label{e.holder2.eq}
		\nabla \cdot (w \nabla u) = \nabla \cdot (w \bf{f})
	\end{equation}
	in $B_1$. For all $\rho < 1$, let $f \in L^2(B_1,\R^d;w)$ satisfy the growth estimate
	\begin{equation}\label{e.fdecay}
		\fint_{B_\varrho} \abs{\mathbf f}^2 d\textcolor{black}{\nu} \lesssim M \varrho^{2\gamma-2} \fint_{B_1} \abs{\mathbf f}^2 d\textcolor{black}{\nu}
	\end{equation}
	for some exponent $\gamma = \gamma(A_2(w), d) \in (0,1)$ and $d\textcolor{black}{\nu} = w dx$. Then there exists an exponent $\widehat{\gamma} = \widehat{\gamma}(A_2(w),d,\gamma) \in (0,1)$ and a constant $C = (A_2(w),d,\gamma) \geq 1$ such that
	\begin{equation*}
		\fint_{B_\varrho} |\nabla u|^2 d\textcolor{black}{\nu} \leq C \varrho^{2\widehat{\gamma}-2} \left ( \fint_{B_1} |\nabla u|^2 d\textcolor{black}{\nu}  + M \fint_{B_1} \abs{\mathbf f}^2 d\textcolor{black}{\nu}\right ).
	\end{equation*}
\end{lemma}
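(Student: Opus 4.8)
The goal of \cref{l.holder2} is to upgrade the Caccioppoli-type energy estimate to a decay estimate when the right-hand side $\mathbf f$ is already known to have a power-law decay in its $\underline{L}^2$ averages. The plan is to iterate a Caccioppoli estimate on dyadic balls together with a suitable comparison with the homogeneous equation, in the same spirit as the proof of \cref{e.holder.2}, but now carefully tracking how the hypothesis \cref{e.fdecay} feeds into each dyadic scale.

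First I would set up the dyadic scaling. For $r<1$ write $B_r$ and test equation \cref{e.holder2.eq} against $\psi=(u-\bar u_{2r})\phi^2$, where $\bar u_{2r}=\fint_{B_{2r}}u\,d\mu$ and $\phi$ is a cutoff with $\chi_{B_r}\le\phi\le\chi_{B_{2r}}$, $|\nabla\phi|\lesssim 1/r$. As in \cref{l.holder} this yields the Caccioppoli inequality
\[
\fint_{B_r}|\nabla u|^2\,d\mu \lesssim \frac{1}{r^2}\fint_{B_{2r}}|u-\bar u_{2r}|^2\,d\mu + \fint_{B_{2r}}|\mathbf f|^2\,d\mu.
\]
The weighted Poincaré inequality (cf.~\cite{FKS}) controls the first term on the right by $\fint_{B_{2r}}|\nabla u|^2\,d\mu$, which is not yet a decay gain; to get decay one must instead compare $u$ with the $w$-harmonic replacement $v$ solving $-\nabla\cdot(w\nabla v)=0$ in $B_{2r}$ with $v=u$ on $\partial B_{2r}$. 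The De~Giorgi--Nash--Moser theory for $A_2$-weighted equations (see~\cite{FKS,FJK}) gives the decay $\fint_{B_{\theta r}}|\nabla v-\overline{\nabla v}|^2\,d\mu\lesssim\theta^{2\sigma}\fint_{B_{2r}}|\nabla v|^2\,d\mu$ for some $\sigma=\sigma(A_2(w),d)\in(0,1)$ and any $\theta\in(0,1)$, while the energy estimate for the difference $u-v$ gives $\fint_{B_{2r}}|\nabla(u-v)|^2\,d\mu\lesssim\fint_{B_{2r}}|\mathbf f|^2\,d\mu$.

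Next I would combine these into the excess-decay iteration. Setting $E(r):=\fint_{B_r}|\nabla u - (\nabla u)_{B_r}|^2\,d\mu$ (the natural quantity, since $w$-harmonic functions need not have constant gradient but their gradient oscillation decays), the comparison yields
\[
E(\theta r)\lesssim \theta^{2\sigma}E(2r) + \fint_{B_{2r}}|\mathbf f|^2\,d\mu \lesssim \theta^{2\sigma}E(2r) + M\,(2r)^{2\gamma-2}\fint_{B_1}|\mathbf f|^2\,d\mu,
\]
where in the last step I invoked the hypothesis \cref{e.fdecay}. Choosing $\theta$ small so that $C\theta^{2\sigma}\le\theta^{2\widehat\gamma}$ for a suitable $\widehat\gamma<\min(\sigma,\gamma)$, a standard iteration lemma (e.g.~\cite[Lemma~2.1, Ch.~III]{FKS}-type, or Giaquinta's iteration) converts this into $E(r)\lesssim r^{2\widehat\gamma-2}(E(1)+M\fint_{B_1}|\mathbf f|^2\,d\mu)$. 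Finally, adding back $|(\nabla u)_{B_r}|^2$ — which is controlled by $\fint_{B_1}|\nabla u|^2\,d\mu$ plus the accumulated error via telescoping over dyadic scales — and using $E(1)\lesssim\fint_{B_1}|\nabla u|^2\,d\mu$, I obtain the claimed bound
\[
\fint_{B_\varrho}|\nabla u|^2\,d\mu \le C\varrho^{2\widehat\gamma-2}\left(\fint_{B_1}|\nabla u|^2\,d\mu + M\fint_{B_1}|\mathbf f|^2\,d\mu\right).
\]

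The main obstacle I anticipate is bookkeeping the mismatch between the two decay exponents: the homogeneous-equation exponent $\sigma$ coming from De~Giorgi--Nash--Moser and the prescribed exponent $\gamma$ from \cref{e.fdecay} need not coincide, so $\widehat\gamma$ must be taken as (slightly less than) their minimum, and one has to make sure the iteration closes regardless of which one is smaller — in particular handling the borderline where $\gamma$ is the binding constraint, so that the forcing term, not the homogeneous decay, dictates the final rate. A secondary technical point is that all the weighted estimates (Poincaré, Caccioppoli, Campanato-type decay for $w$-harmonic functions) carry constants depending only on $A_2(w)$ and $d$, which must be checked against the references~\cite{FKS,FJK}; this is routine but needs to be stated so that $\widehat\gamma$ and $C$ genuinely depend only on $A_2(w),d,\gamma$ as claimed.
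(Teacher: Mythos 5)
The overall structure of your argument — compare $u$ with the $w$-harmonic replacement $v$ on each ball, use an energy estimate for the difference, and iterate over dyadic scales — matches the paper's. However, there is a genuine gap in the key lemma you invoke for the homogeneous piece. You claim that De~Giorgi--Nash--Moser theory for $A_2$-weighted equations gives a \emph{gradient excess decay}
\[
\fint_{B_{\theta r}}|\nabla v-\overline{\nabla v}|^2\,d\mu\lesssim\theta^{2\sigma}\fint_{B_{2r}}|\nabla v|^2\,d\mu,\qquad \sigma\in(0,1),
\]
but for a general $A_2$ weight this estimate is false: De~Giorgi--Nash--Moser gives H\"older continuity of the \emph{solution} $v$, not of its gradient, and gradient oscillation decay requires additional regularity of the coefficients (Dini/H\"older), which an $A_2$ weight does not possess. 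The only consequence of solution H\"older continuity plus Caccioppoli is the weaker bound
\[
\fint_{B_{\theta r}}|\nabla v|^2\,d\mu\lesssim\theta^{2\alpha-2}\fint_{B_{2r}}|\nabla v|^2\,d\mu,\qquad \alpha\in(0,1),
\]
i.e. the full gradient energy may grow like $\theta^{2\alpha-2}$ as $\theta\to 0$ — exactly what the paper's Lemma~\ref{l.holder} (estimate \eqref{e.holder.2}) provides when applied to a $w$-harmonic function. Consequently the excess quantity $E(r)=\fint_{B_r}|\nabla u-(\nabla u)_{B_r}|^2\,d\mu$ is not the right object to iterate here, and the telescoping step to recover $|(\nabla u)_{B_r}|^2$ becomes vacuous since you never had genuine excess decay.

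The paper's proof is actually simpler and uses only what is available: with the same comparison map $v$ and difference $\widehat u=u-v$, it writes
\[
\fint_{B_\varrho}|\nabla u|^2\,d\mu\lesssim \frac{w(B_\varrho)}{w(B_R)}\Bigl(\frac{\varrho}{R}\Bigr)^{2\alpha-2}\fint_{B_R}|\nabla u|^2\,d\mu+\fint_{B_R}|\mathbf f|^2\,d\mu,
\]
using the $\varrho^{2\alpha-2}$-growth of the full gradient energy for $v$ (Lemma~\ref{l.holder} with zero right-hand side) and the energy estimate $\int_{B_R}|\nabla\widehat u|^2\,d\mu\lesssim\int_{B_R}|\mathbf f|^2\,d\mu$. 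Inserting the hypothesis \eqref{e.fdecay} and applying the standard iteration inequality (\cite[Lemma 3.4]{HanLin}) directly to the full energy $\varrho\mapsto\fint_{B_\varrho}|\nabla u|^2\,d\mu$ closes the argument without ever needing gradient regularity for $w$-harmonic functions. You would need to replace your excess-decay ingredient by this energy-growth bound (or justify gradient H\"older regularity under extra hypotheses on $w$ that the lemma does not assume).
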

\begin{proof}
	Let $\varrho < R \leq 1$ and construct the comparison solution $v$ that solves
	\begin{equation*}
		\begin{cases}
			\nabla \cdot (w \nabla v) = 0, & \text{ in $B_R$.} \\
			v = u,                       & \partial B_R.
		\end{cases}
	\end{equation*}
	Denote $\widehat{u} = u - v$. Then $\widehat{u}$ solves \cref{e.holder2.eq} with $\widehat{u} \in H^1_0(B_R;w)$; thus we can test \cref{e.holder2.eq} with $\widehat{u}$ itself, which gives after applying Young's inequality together with the Poincaré inequality, that
	\begin{equation*}
		\int_{B_R} |\nabla \widehat{u}|^2 d\textcolor{black}{\nu} \lesssim \int_{B_R}  \abs{\mathbf f}^2 d\textcolor{black}{\nu}.
	\end{equation*}
	Also, by direct computation and applying \cref{l.holder} we get
	\begin{align*}
		\int_{B_\varrho} |\nabla u|^2 d\textcolor{black}{\nu}
		 & \lesssim
		\int_{B_\varrho} |\nabla v|^2 d\textcolor{black}{\nu} + \int_{B_\varrho} |\nabla \widehat{u}|^2 d\textcolor{black}{\nu}
		\\
		 & \lesssim
		\frac{w(B_\varrho)}{w(B_R)}\left ( \frac{\varrho}{R}  \right )^{2\alpha-2} \int_{B_R} |\nabla v|^2 d\textcolor{black}{\nu} + \int_{B_R} |\nabla \widehat{u}|^2 d\textcolor{black}{\nu}
		\\
		 & \lesssim
		\frac{w(B_\varrho)}{w(B_R)}\left ( \frac{\varrho}{R}  \right )^{2\alpha-2} \int_{B_R} |\nabla u|^2 d\textcolor{black}{\nu} + \int_{B_R} \abs{\mathbf f}^2 d\textcolor{black}{\nu}.
	\end{align*}
	Now by \cref{e.fdecay} we have by a standard iteration inequality, see for instance~\cite[Lemma 3.4]{HanLin}, that
	\begin{equation*}
		\fint_{B_\varrho} |\nabla u|^2 d\textcolor{black}{\nu} \lesssim \left ( \frac{\varrho}{R}\right )^{2\widehat{\alpha} - 2} \fint_{B_R} |\nabla u|^2 d\textcolor{black}{\nu} + \varrho^{2\widehat{\alpha} - 2} M \fint_{B_R} \abs{\mathbf f}^2 d\textcolor{black}{\nu},
	\end{equation*}
	which completes the proof.
\end{proof}
We finally end with a regularity result for the polynomially bounded weight $w_z$ away from the pole $z$. We will later use this to derive bounds for the Green function for polynomially weighted equations.

\begin{lemma}\label{l.grad.nopole}
	Let $z^\ast \in \R^d$, $x^\ast \in \R^d \setminus \{z\textcolor{black}{^\ast}\}$, and let
	\begin{align}  \label{e.r.def}
		r = \frac{\min \left( |x^\ast-z^\ast| , 1 \right)}{16}
		.
	\end{align}
	For $w_{z^\ast}(x) = |x-z^\ast|^{\beta^2}$,
	let $u \in H^1(B_{8r}(x^*); w_{z^\ast})$ be a solution of
	\begin{align} \notag 
		\nabla \cdot (w_{z^\ast}\nabla u ) = 0 \quad \mbox{ in }  B_{8r}(x^*).
	\end{align}
	Then there exist a constant $C(\beta,d)<\infty$ such that
	\begin{align}  \label{e.grad.local}
		\sup_{B_r(x^\ast)}|\nabla u(x)|
		 & \leq
		C r^{-1} \inf_{a} \fint_{B_{8r}(x^*)} |u-a|^2 d\textcolor{black}{\nu}_{z^\ast}.
	\end{align}
\end{lemma}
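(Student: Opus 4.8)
The plan is to reduce the problem to a Caccioppoli-type estimate combined with the De Giorgi--Nash--Moser regularity for the $A_2$-weighted equation away from the pole. First I would observe that on the ball $B_{8r}(x^\ast)$ the weight $w_{z^\ast}(x) = |x-z^\ast|^{\beta^2}$ is comparable to the constant $r^{\beta^2}$ (more precisely to $|x^\ast - z^\ast|^{\beta^2}$), because by the choice of $r$ in \eqref{e.r.def} every point of $B_{8r}(x^\ast)$ lies at distance between $\tfrac12|x^\ast-z^\ast|$ and $\tfrac{3}{2}|x^\ast-z^\ast|$ from $z^\ast$, so $w_{z^\ast}$ is smooth, bounded above and below, and has bounded gradient on this ball. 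Consequently the equation $\nabla\cdot(w_{z^\ast}\nabla u)=0$ is, after dividing by the comparable constant, a \emph{uniformly elliptic} equation with smooth coefficients, and the $A_2$-constant of $w_{z^\ast}$ restricted to such balls is uniformly bounded by a constant depending only on $\beta,d$.

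Next I would apply the interior gradient estimate for such equations: since $u$ is a solution of a uniformly elliptic divergence-form equation with Lipschitz (indeed smooth) coefficients on $B_{8r}(x^\ast)$, standard Schauder/$W^{2,2}$ interior estimates give
\[
\sup_{B_r(x^\ast)} |\nabla u|^2 \le C\, r^{-2} \fint_{B_{4r}(x^\ast)} |u-a|^2\,dx
\]
for any constant $a$, where $C$ depends on the ellipticity ratio and the Lipschitz norm of the coefficients on the rescaled ball — both controlled by $\beta$ and $d$ alone after rescaling $x \mapsto (x-x^\ast)/r$. Alternatively, and to stay strictly within the toolbox already cited in the paper, one can obtain this from \cref{l.holder} applied with $\mathbf f = 0$ (which yields $\fint_{B_\varrho}|\nabla u|^2\,d\mu \lesssim \varrho^{2\alpha-2}\fint_{B_1}|\nabla u|^2\,d\mu$ after scaling) together with a Caccioppoli iteration, or directly from the De Giorgi--Nash--Moser estimate \cite[Theorem 2.3.15]{FKS} applied to the gradient — legitimate here because the gradient of a solution of a constant-ish-coefficient equation again solves a similar equation, or more simply because on these non-degenerate balls one has the classical theory. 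Finally I would convert the unweighted average on the right-hand side back to the weighted one: since $w_{z^\ast} \sim |x^\ast-z^\ast|^{\beta^2}$ uniformly on $B_{8r}(x^\ast)$, we have $\fint_{B_{4r}(x^\ast)}|u-a|^2\,dx \simeq \fint_{B_{4r}(x^\ast)}|u-a|^2\,d\mu_{z^\ast} \le \fint_{B_{8r}(x^\ast)}|u-a|^2\,d\mu_{z^\ast}$ (enlarging the ball only helps), and then take the infimum over $a$ to obtain \eqref{e.grad.local}.

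I expect the main obstacle to be purely bookkeeping rather than conceptual: one must verify carefully that all constants that appear — the ellipticity ratio of $w_{z^\ast}/|x^\ast-z^\ast|^{\beta^2}$ on $B_{8r}(x^\ast)$, the Lipschitz seminorm of this ratio after rescaling by $r$, and the $A_2$-constant — are genuinely \emph{scale-invariant} and depend only on $\beta$ and $d$, with no hidden dependence on $|x^\ast - z^\ast|$ or on the position of the ball. The choice of the factor $16$ in \eqref{e.r.def} (and the gap between $B_r$, $B_{4r}$, and $B_{8r}$) is exactly what guarantees this: it keeps $B_{8r}(x^\ast)$ bounded away from the pole $z^\ast$ by a definite fraction of its own radius, so the rescaled weight lives in a fixed compact class of smooth, uniformly elliptic weights. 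Once this uniformity is in hand, the gradient bound is a direct citation of interior elliptic regularity, and the passage between weighted and unweighted averages is immediate from the two-sided bound on the weight.
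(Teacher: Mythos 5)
Your proof is correct and mirrors the paper's argument: both rest on the observation that, thanks to the choice of $r$, the weight $w_{z^\ast}$ is comparable to a constant on $B_{8r}(x^\ast)$ (the paper normalizes by $\inf_{B_{8r}(x^\ast)} w_{z^\ast}$, obtaining a coefficient $\widehat{w}$ bounded above by $C(\beta)$ with Lipschitz constant $C(\beta)/r$), after which the classical interior gradient estimate for uniformly elliptic divergence-form equations with Lipschitz coefficients --- the paper cites display (3.17) of Han--Lin, which is precisely your ``standard interior estimate'' after rescaling $x \mapsto x^\ast + ry$ --- yields the claim. One small slip to fix: the parenthetical ``enlarging the ball only helps'' is not literally true for averages; passing from $\fint_{B_{4r}(x^\ast)}$ to $\fint_{B_{8r}(x^\ast)}$ incurs the factor $\mu_{z^\ast}(B_{8r}(x^\ast))/\mu_{z^\ast}(B_{4r}(x^\ast))$, which here is bounded by $C(\beta,d)$ exactly because the weight is essentially constant on $B_{8r}(x^\ast)$, so the exact $\leq$ in that chain should be a $\lesssim$.
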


\begin{proof}
	Since for any $y \in B_{8r}(x^\ast)$ we have $|y-z^\ast| \geq 8r$, it follows that, for any $i = 1,\ldots,d$ and for $\widehat{w} = \frac{w_{z^\ast}}{\inf_{B_{8r}(x^\ast)} w_{z^\ast}}$, we have
	\begin{align*}
		\sup_{B_{8r}(x^\ast)} \widehat{w} \leq C(\beta), \quad \text{and}, \quad \sup_{B_{8r}(x^\ast)} |\partial_{x_i} \widehat{w}| \leq \frac{C(\beta)}{r}.
	\end{align*}
	As such, $v = u - a$ solves the equation $\nabla \cdot (\widehat{w} \nabla v)= 0$, where the coefficient $\widehat{w}$ is Lipschitz continuous with Lipschitz constant $C/r$ and $\widehat{w}\geq 1$.
	Now the regularity theory for such equations, see display (3.17) in~\cite{HanLin}, implies that
	\begin{align*}
		\sup_{x \in B_r(x^\ast)} |\nabla v| \lesssim \frac{1}{r} \fint_{B_{8r}(x^\ast)} v^2 d\textcolor{black}{\nu}_{z^\ast},
	\end{align*}
	proving the claim.
\end{proof}

\begin{lemma} \label{l.green}
	There exists a periodic Green function $(x,y,z) \mapsto G(x,y\,; z)$ solving, for every $z,y \in \T^d$, the equation 
	\begin{align}\label{e.green.eq}
		\left\{
		\begin{aligned}
			 & -\nabla \cdot \left( w(\cdot\,;z) \nabla  G(\cdot,y\,;z) \right)  =  \delta(\cdot - y)-\textcolor{black}{{\frac{1}{\int_{\T^d}dz}}} & \mbox{in} & \ \T^d, \\
			 & \int_{\T^d} G(x,y\,;z) dx = 0.
		\end{aligned}
		\right.
	\end{align}
	Moreover, the Green function is symmetric in the first two variables, that is, for every $x,y,z \in \T^d$,
	\begin{align}  \label{e.green.sym}
		G(x,y\,; z) = G(y, x\,; z)\,.
	\end{align}
	Furthermore, there exists a constant $C(\beta,d) < \infty$ such that, for every $x,y,z \in \T^d$, it holds for $i=0,1$ that
	\begin{align}\label{e.green.bdd}
		(|x-y| \wedge |y-z|)^i |\nabla_y^i G(x,y;z)| \lesssim
		\begin{cases}
			(|x-z| \vee |y-z|)^{-\beta^2} |x-y|^{2-d},          & d > 2, \\
			(|x-z| \vee |y-z|)^{-\beta^2} \log \frac{C}{|x-y|}, & d=2.
		\end{cases}
	\end{align}
\end{lemma}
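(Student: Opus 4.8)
The plan is to construct the Green function by a standard approximation/duality procedure for degenerate elliptic equations and then extract the pointwise bounds from the De Giorgi--Nash--Moser type estimates already recorded in Lemmas~\ref{l.holder}--\ref{l.grad.nopole}. For existence, I would first regularize the weight, replacing $w(\cdot\,;z)$ by $w_\delta := w + \delta$, which is uniformly elliptic, so that the classical periodic Green function $G_\delta(\cdot,y\,;z)$ with zero mean exists and is symmetric (self-adjointness of $-\nabla\cdot(w_\delta\nabla\cdot)$ on mean-zero periodic functions). Symmetry \eqref{e.green.sym} then passes to the limit once we have the required compactness. The right-hand side normalization $\delta(\cdot-y) - 1/\int_{\T^d}dz$ is forced by the solvability condition on the torus (the source must have zero average), and this is exactly the term written in \eqref{e.green.eq}.

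**Uniform estimates and passage to the limit.** The heart of the matter is the bound \eqref{e.green.bdd}, uniform in $\delta$. Fix $x,y,z$ and set $\rho = |x-y|$. Away from the singularity $x=y$, the function $v(x) := G_\delta(x,y\,;z)$ solves $-\nabla\cdot(w_\delta\nabla v)=-1/\int_{\T^d}dz$, a bounded right-hand side, on balls not containing $y$; one combines the weighted Moser estimate of Lemma~\ref{l.holder} (for the $L^\infty$ bound) with the energy inequality $\int_{\T^d} w_\delta|\nabla v|^2 = v(y) + (\text{bounded})$ to get the Green's function mass. The precise decay $|x-y|^{2-d}$ for $d>2$ (respectively $\log(C/|x-y|)$ for $d=2$) follows the classical Littman--Stampacchia--Weinberger scheme: on dyadic annuli $B_{2^{-k}}(y)\setminus B_{2^{-k-1}}(y)$ one uses the weighted Caccioppoli and Sobolev inequalities (valid since $\widehat w = |x|^{\beta^2}$ is $A_2$, by the computation preceding Lemma~\ref{l.holder}) to run the standard iteration on the level sets of $G_\delta$. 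The extra factor $(|x-z|\vee|y-z|)^{-\beta^2}$ comes from the size of the weight itself: near $y$, when $|y-z|\gtrsim|x-y|$, the weight $w$ is comparable to $|y-z|^{\beta^2}$ on the relevant balls, and dividing through by this constant reduces the equation (via $\widehat w = w/\inf w$, as in the proof of Lemma~\ref{l.grad.nopole}) to a uniformly elliptic one with Lipschitz coefficient, for which the Euclidean Green bounds are classical; when instead $|y-z|\ll|x-y|$ one splits the chain of annuli at the scale $|y-z|$ and uses the degenerate estimate only on the innermost region. The gradient bound ($i=1$) is obtained by combining Lemma~\ref{l.grad.nopole} (interior gradient estimate for the frozen polynomial weight away from its pole) with the just-proven $i=0$ bound on a slightly larger ball, choosing $r$ as in \eqref{e.r.def} precisely so that both $|x-y|$ and $|y-z|$ are comparable to constants on $B_{8r}(x^\ast)$ up to the stated factors. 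Finally, the uniform bounds give weak-$*$ compactness of $G_\delta$ in the appropriate weighted Sobolev space on any region away from $y$, and a diagonal argument plus lower semicontinuity of the Dirichlet energy yields a limit $G(\cdot,y\,;z)$ solving \eqref{e.green.eq} and satisfying \eqref{e.green.bdd}; symmetry survives since it holds for each $G_\delta$ and convergence is strong enough locally.

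**Main obstacle.** The genuinely delicate point is tracking the weight-dependent prefactor $(|x-z|\vee|y-z|)^{-\beta^2}$ uniformly across the three geometrically distinct regimes ($z$ far from both $x$ and $y$; $z$ near $y$ but far from $x$; $z$ near the segment between $x$ and $y$) while simultaneously controlling the $|x-y|^{2-d}$ singularity --- in other words, decoupling the "degeneracy in the $z$-pole" from the "fundamental-solution singularity at $x=y$." I expect to handle this by a careful dyadic decomposition of the annuli around $y$, switching between Lemma~\ref{l.holder2} (polynomial growth of the source, used on annuli straddling the pole $z$) and Lemma~\ref{l.grad.nopole} (frozen Lipschitz weight, used once we are at a definite distance from $z$), and checking that the constants in the iteration depend only on $A_2(\widehat w)\le C/(d^2-\beta^4)$ and on $d$, hence are uniform in $z$ and in the regularization parameter $\delta$.
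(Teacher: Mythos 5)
Your proposal is correct in broad outline but takes a genuinely different (and more laborious) route than the paper. You propose to regularize the weight to $w_\delta=w+\delta$, build classical Green functions, and re-derive the pointwise decay from scratch by a Littman--Stampacchia--Weinberger / De~Giorgi iteration over dyadic annuli; you then flag the separation of the ``$z$-pole degeneracy'' from the ``$x=y$ singularity'' as the main obstacle. The paper sidesteps both the regularization and the hand-run iteration: existence and symmetry of the periodic Green function for the $A_2$-degenerate operator follow directly from the Gr\"uter--Widman and Fabes--Jerison--Kenig existence theory (no $\delta$-approximation is needed), and the $i=0$ bound is obtained by (a) comparing $G_{\mathrm{per}}$ with the Green function $G_\Sigma$ of the unit ball via the weighted maximum principle (the difference solves an equation with bounded right-hand side, hence is uniformly bounded), and then (b) quoting the FJK pointwise estimate
\begin{align*}
  G_\Sigma(x,y;z)\;\le\; C\int_{|x-y|}^{1}\frac{r\,dr}{w(B_r(x);z)}\,,
\end{align*}
into which the elementary lower bound $w(B_r(x);z)\gtrsim (r\vee|x-z|)^{\beta^2}r^d$ is fed. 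That single weight-mass bound automatically encodes all three geometric regimes you worry about, so the case analysis collapses to a one-line split of the resulting scalar integral at $|x-y|\vee|x-z|$ (after using symmetry to assume $|y-x|\le|z-x|$). The gradient bound is obtained in both your proposal and the paper by the same mechanism: Lemma~\ref{l.grad.nopole} away from the pole, combined with the $i=0$ estimate on a comparable ball. In short, your first-principles iteration would work and is instructive, but the paper's route through the FJK integral representation of the Green function is substantially shorter because the hard analysis (the annular iteration) has already been packaged into that one inequality; if you do pursue your route, be sure to include the $G_{\mathrm{per}}$ versus $G_\Sigma$ comparison step, which is what controls the zero-mean normalization on the torus and is not visible in your sketch.
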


\begin{proof}
	It is clear from basic existence theory that for periodic $f$, the following problem has a unique periodic solution (with $\int_{\T^d} udy = 0$)
	\begin{align*}
		-\nabla \cdot \left( w(\cdot\,;z) \nabla  u \right) = f - \int_{\T^d} fdy.
	\end{align*}
	Now, we can represent the map $f \to u$ via an integral kernel (see~\cite{GruterWidman,FJK}), i.e.
	\begin{align*}
		u(x;z) = \int_{\T^d} G(x,y;z) f(y) dy.
	\end{align*}
	It can be shown using standard methods (see~\cite{GruterWidman,FJK}) that such $G$ is unique, periodic, and satisfies \cref{e.green.eq}.

	For the rest of the proof, denote $G_{\textup{per}}$ the periodic Green function and let $G_{\Sigma}$ be the Green function for the unit ball $B_1 = \Sigma$. Let $y \in B_{1/2}$ and denote $u_z = G_{\textup{per}}(\cdot,y;z) - G_{\Sigma}(\cdot,y;z)$, then
	\begin{align*}
		\begin{cases}
			-\nabla \cdot \left( w(\cdot\,;z) \nabla  u_z \right)  =  -1, & \textnormal{in $\Sigma$,}\\
			u_z = G_{\textup{per}}(\cdot,y;z), & \text{on $\partial \Sigma$}.
		\end{cases}
	\end{align*}
	By the maximum principle (see~\cite[Theorem 2.2.3]{FKS}) we know that
	\begin{align*}
		\abs{u_z} \leq C + \sup_{\partial \Sigma} G_{\textup{per}}(\cdot,y;z).
	\end{align*}
	Next, by compactness we know that the right-hand side is bounded by a constant $C(d,\beta)$, i.e.~$\abs{u_z} \leq C$ in $\Sigma$. Hence, it suffices to bound $G_{\Sigma}$ from above.

	By~\cite[Theorem 3.3]{FJK} we have that
	\begin{align*} 
		G_{\Sigma}(x,y\,; z)
		\leq
		C \int_{|x{-}y|}^{1} \frac{r}{w( B_r(x) \, ; z)} \, dr
	\end{align*}
	\textcolor{black}{By symmetry, we may assume that $|y-x|\leq |z-x|$}. First, we have the lower bound
	\begin{align*} 
		w( B_r(x) \, ; z) \geq \frac1C \int_{B_r(x)} |y-z|^{\beta^2} \, dy \geq C (r \vee |x-z|)^{\beta^2} r^{d}
	\end{align*}
	from which we deduce that
	\begin{align*} 
		\lefteqn{
		\int_{|x{-}y|}^{1} \frac{r}{w( B_r(x) \, ; z)} \, dr
		} \qquad &
		\\ \notag
		         &
		\leq
		C \int_{|x{-}y|}^{1} r^{2-d} (r \vee |x{-}z|)^{-\beta^2} \, \frac{dr}{r}
		\\ \notag
		         & \leq
		C |x{-}z|^{-\beta^2} \int_{|x{-}y|}^{|x{-}y| \vee |x{-}z|} r^{2-d}  \, \frac{dr}{r}  + C \int_{|x{-}y| \vee |x{-}z|}^{1} r^{2-d-\beta^2} \, \frac{dr}{r}
		\,.
	\end{align*}
	The  first integral admits the estimate
	\begin{align*} 
		\int_{|x{-}y|}^{|x{-}y| \vee |x{-}z|} r^{2-d}  \, \frac{dr}{r}
		\leq
		C
		\begin{cases}
			|x{-}y|^{2-d} \chi_{\{|x-z| > |x-y| \}}                 & \mbox{if } d > 2 ,   \\
			\log\left(\frac{ |x{-}y| \vee |x{-}z|}{|x{-}y|} \right) & \mbox{if } d =  2	\,
		\end{cases}
	\end{align*}
	and the second (since $d \geq 2$ and $\beta^2 > 0$)
	\begin{align*} 
		\int_{|x{-}y| \vee |x{-}z|}^{1} r^{2-d-\beta^2} \, \frac{dr}{r}
		\leq
		C
		(|x{-}y| \vee |x{-}z|)^{2-d-\beta^2}.
	\end{align*}
	\textcolor{black}{By $|y-z|\leq |x-z|$ we see that $|x-y| \leq 2 |x-z|$, and hence for $d>2$ we have}
	\begin{align*}
		\int_{|x{-}y|}^{1} \frac{r}{w( B_r(x) \, ; z)} \, dr \lesssim & |x-z|^{-\beta^2} |x-y|^{2-d} + |x{-}z|^{2-d-\beta^2} \\
		\lesssim                                                      & |x-z|^{-\beta^2} |x-y|^{2-d}.
	\end{align*}
	In the case $d = 2$ we get (since $|x-z| < C$)
	\begin{align*}
		\int_{|x{-}y|}^{1} \frac{r}{w( B_r(x) \, ; z)} \, dr \lesssim & |x-z|^{-\beta^2} \log \frac{|x-z|}{|x-y|} + |x{-}z|^{-\beta^2} \\
		\lesssim                                                      & |x-z|^{-\beta^2} \log \frac{|x-z|}{|x-y|}
		\lesssim
		|x-z|^{-\beta^2} \log \frac{C}{|x-y|}.
	\end{align*}
	This proves \cref{e.green.bdd} in the case $i=0$, as $x,y,z \in \T^d$ and we know that the right-hand side is bounded from below. Thus, we can dispense with the additive constant in passing from $G_{\Sigma}$ to $G_{\textup{per}}$ by simply choosing $C$ large enough.

	To prove the gradient estimate we first consider the fact that if we are away from the diagonal, i.e.~$x \neq y$ then $v(\cdot) =  G_{\Sigma}(\cdot,y;z)$ solves
	\begin{align*}
		\nabla \cdot (w_z \nabla v) = 0.
	\end{align*}
	As such \cref{l.grad.nopole} implies that whenever $|x-y| \wedge |y-z| > 16 \varrho$ we get
	\begin{align*}
		|\nabla_y G_{\Sigma}(x,y;z)| \leq \frac{1}{{\varrho}} \left (\frac{1}{w(B_{\varrho}(y);z)}\int_{B_{\varrho}(y)} G_{\Sigma}(x,y';z)^2 d\textcolor{black}{\nu}_z(y') \right )^{1/2}.
	\end{align*}
	Assembling the estimates, as $\varrho < 1$, we obtain
	\begin{align*}
		|\nabla_y G_{\Sigma}(x,y;z)| \lesssim \frac{1}{\varrho} G_\Sigma(x,y;z),
	\end{align*}
	and we complete the proof by arguing as for $G_{\textup{per}}$ above and then applying \cref{e.green.bdd} for $i=0$.
\end{proof}

\subsection{Proof of \cref{t.torus.reg}}

Fix~$x^\ast,z^\ast \in \T^d$ such that $x^\ast\neq z^\ast$, and let $r = \min\left(|x^\ast-z^\ast|, 1 \right)/16$\footnote{\textcolor{black}{Note that as we work on $\T^d$, the distance $|x^\ast-z^\ast|$ is always bounded and hence the minimum with 1 is not required. For the reader's convenience, we have taken the minimum with 1 to emphasize that we are interested in the small scales proportional to $|x^\ast-z^\ast|$.}}. \textcolor{black}{In the sequel, we use $w_{z^\ast}$ and $u_{z^\ast}$ to denote functions $w(\cdot;z^\ast)$ and $u(\cdot;z^\ast)$ whenever confusion cannot arise.}

Formally, one could differentiate $u$ with respect to $z$ leading to a new equation and then try to derive $L^\infty$ bounds for the solution. However, then the right-hand side would contain $\partial_{z_i} w$ which is not in $L^2(w)$. To overcome this, we first shift the pole, and then derive a corrector expansion and $L^\infty$ bounds for the correctors. The proof is divided into four steps. In the step 1, we shift the pole, while the corrector expansion is derived in step 2. Correctors in the expansion are then estimated in step 3, and in step 4 we conclude the proof.
\smallskip

\noindent \emph{Step 1. Shifting the pole:}
We introduce the following shift transformation (i.e.~change of coordinates):
Let $\Phi_{z^\ast}(x) = x + \eta(x{-}z^\ast) h$, where $ \chi_{B_{r}} \leq \eta \leq \chi_{B_{2r}}$ with $\| \nabla^k \eta \|_{L^\infty(\R^d)} \leq C_k r^{-k}$. Note that if \textcolor{black}{$|h|\leq cr$} for small constant $c$, then $\Phi_{z^\ast}$ is actually a diffeomorphism.

Next we observe that since $w(x;z) = w(x-z)$, we also have $w(x+h;z+h) = w(x;z)$, which will be useful later.
Ultimately, we will establish an expansion of the form
\begin{align*}
	u(\Phi_{z^\ast}(\cdot);z^\ast) - u(\Phi_{z^\ast}(\cdot);z^\ast+h) = \sum_{1 \leq |\alpha| \leq d/2} v^{(\alpha)}(x^\ast;z^\ast) h^\alpha + o(|h|^{d/2}),
\end{align*}
for $x \in (B_{4r}(z^\ast))^c$ and $v^{(\alpha)}(x^\ast;z^\ast) = \partial_z^\alpha u(x^\ast;z^\ast)$. Then we establish $L^\infty$ bounds for the correctors $v^{(\alpha)}$ and this will give us the desired regularity for $u$ in terms of $z$.

By the definition of $u(x;z)$, we have for any $\phi \in C^\infty(\T^d)$ that
\begin{align*}
	\int_{\T^d} w(x;z^\ast+h) \mathbf f(x;z^\ast + h) \nabla \phi(x) dx  = \int_{\T^d} w(x;z^\ast+h) \nabla u(x;z^\ast+h) \cdot \nabla \phi(x) dx.
\end{align*}
To proceed, denote first $\widehat{w}  := w( \Phi_{z^\ast}(\cdot)\,; z^\ast+h) $, $ \widehat{u} := u( \Phi_{z^\ast}(\cdot)\,; z^\ast+h)$ and $\widehat{\mathbf{f}} := f(\Phi_{z^\ast}(\cdot);z^\ast+h)$.
Considering the change of variables (for $h$ small enough) $x = \Phi_{z^\ast}(y)$, then
$dx = |\det\nabla \Phi_{z^\ast}(y)| dy$ and if we denote $\widehat{\phi}(y) = \phi(\Phi_{z^\ast}(y))$, then substituting $u(x;z^\ast+h) = \widehat{u}(\Phi_{z^\ast}^{-1}(x))$ we have that
\begin{align*}
	\int_{\T^d} w(\Phi_{z^\ast}(y);z^\ast+h) \big [(\nabla \Phi_{z^\ast})^{-T}\nabla_y \widehat{u}(y;z^\ast+h)\big ] \cdot \big [(\nabla \Phi_{z^\ast})^{-T} \nabla_y \widehat{\phi}(y)\big ] |\det(\nabla \Phi_{z^\ast}(\cdot))| dy
	\\
	=
	\int_{\T^d} \widehat{w} (\nabla \Phi_{z^\ast})^{-1} (\nabla \Phi_{z^\ast})^{-T} |\det(\nabla \Phi_{z^\ast}(\cdot))| \nabla \widehat{u} \cdot \nabla \widehat{\phi} dy.
\end{align*}
Thus, we see that $\widehat{u}$ solves the equation
\begin{align*}
	-\nabla \cdot (\widehat{w} (\nabla \Phi_{z^\ast})^{-1} (\nabla \Phi_{z^\ast})^{-T} |\det(\nabla \Phi_{z^\ast}(\cdot))| \nabla \widehat{u}) = \nabla \cdot (|\det(\nabla \Phi_{z^\ast}(\cdot))| \widehat{w} (\nabla\Phi_{z^\ast})^{-1} \widehat{\mathbf{f}}),
\end{align*}
and we get
\begin{align*} 
	- \nabla \cdot \widehat{w} \nabla  \widehat{u}
	=
	\nabla \cdot \a  \nabla \widehat{u}
	+\nabla \cdot (|\det(\nabla \Phi_{z^\ast}(\cdot))| \widehat{w} (\nabla\Phi_{z^\ast})^{-1} \widehat{\mathbf{f}}),
\end{align*}
where
\begin{align*}
	\a  = \widehat{w}(\cdot) \Bigl((\nabla \Phi_{z^\ast}(\cdot))^{-1} (\nabla \Phi_{z^\ast}(\cdot))^{-T} |\det(\nabla \Phi_{z^\ast}(\cdot))|  -\mathbf{I}_d \Bigr)
	\,.
\end{align*}
Note that $\a = 0$ in $B_{r}(z^\ast)$.
Now this equation together with the equation for $u$, 
\begin{align} \label{e.hatuminusu}
	\notag 
	- \nabla \cdot \big( w_{z^\ast} \nabla( \widehat{u} - u_{z^\ast}) \big)
	       & =-\nabla \cdot (w_{z^\ast} \mathbf f_{z^\ast})
	- \nabla \cdot \big( w_{z^\ast} \nabla \widehat{u} \big)
	\\
	\notag &
	=
	\nabla \cdot \big( ( \widehat{w} - w_{z^\ast})  \nabla \widehat{u} \big) +  \nabla \cdot \a  \nabla \widehat{u} + \nabla \cdot (w_{z^\ast}\widetilde{\mathbf{f}})
	\\ &
	=
	\nabla \cdot \big(  w_{z^\ast} \mathbf{b} \nabla( \widehat{u} - u_{z^\ast}) \big)
	+ \nabla \cdot \big( w_{z^\ast} \mathbf{b}  \nabla u_{z^\ast} \big)
	+ \nabla \cdot (w_{z^\ast}\widetilde{\mathbf{f}})
	\,,
\end{align}
where the matrix $\mathbf{b} $ is defined as
\begin{align*} 
	\mathbf{b}
	=
	\left( \frac{\widehat{w}}{w_{z^\ast}} - 1\right) \mathbf{I}_d
	+
	\frac{\widehat{w}}{w_{z^\ast}} \Bigl((\nabla \Phi_{z^\ast}(\cdot))^{-1} (\nabla \Phi_{z^\ast}(\cdot))^{-T} |\det(\nabla \Phi_{z^\ast}(\cdot))|  -\mathbf{I}_d \Bigr)  \,,
\end{align*}
and
\begin{align*}
	w_{z^\ast} \widetilde{\mathbf{f}} = |\det(\nabla \Phi_{z^\ast}(\cdot))| \widehat{w} (\nabla\Phi_{z^\ast})^{-1} \widehat{\mathbf f} - w_{z^\ast} \mathbf f_{z^\ast}.
\end{align*}
Here $\mathbf{b} $ is as smooth as the correlation function. Indeed,
\begin{align*} 
	\frac{\widehat{w}(x)}{w(x\,; z^\ast)}
	=
	\frac{w( x - z^\ast - (1-\eta) h)}{w(x{-}z^\ast)}\,,
\end{align*}
so that in $B_r(z^\ast)$ we have that $\widehat{w}/w_{z^\ast} = 1 $, and hence $\mathbf{b} = 0$ in $B_r(z^\ast)$.
Next observe that by our assumptions on $w$ we have
\begin{align*}
	\frac{\widehat{w}}{w(x;z^\ast)}
	\lesssim
	\left(\frac{r+|h|}{r}\right)^{\beta^2}
\end{align*}
which immediately gives the bounds
\begin{align*}
	\left |\frac{\widehat{w}}{w(x;z^\ast)} - 1 \right | \lesssim \left(1+|h|/r\right)^{\beta^2}-1 = \beta^2 (|h|/r) + o(|h|/r)
\end{align*}
and
\begin{align*}
	\left | \Bigl((\nabla \Phi_{z^\ast}(\cdot))^{-1} (\nabla \Phi_{z^\ast}(\cdot))^{-T} |\det(\nabla \Phi_{z^\ast}(\cdot))| - \mathbf{I}_d \Bigr) \right | \leq c |h|/r.
\end{align*}
Hence, we have
\begin{align}\label{e.bbdd}
	|\mathbf{b}| \leq c |h|/r.
\end{align}
Thus, $\mathbf{b}/|h|$ is bounded, and we can choose $|h|<r/c$ such that $|\mathbf{b}| < 1/2$.

\vspace{1cm}

\noindent \emph{Step 2. Corrector expansion:} In this step we derive the expansions.

First, note that for any function $\tilde u$, denoting $v = \widehat{u} - u_{z^\ast} - \tilde u$ and using \cref{e.hatuminusu}, we obtain
\begin{multline*}
	- \nabla \cdot \big( w_{z^\ast} \nabla v \big) = \nabla \cdot \big(  w_{z^\ast} \mathbf{b} \nabla v \big)
	+ \nabla \cdot \big( w_{z^\ast} \mathbf{b}  \nabla u_{z^\ast} \big) 
	+ \nabla \cdot (w_{z^\ast}(1+\mathbf{b}) \nabla \tilde u )
	\\
	+ \nabla \cdot(w_{z^\ast}\widetilde{\mathbf{f}}).
\end{multline*} 
After rearrangement, this gives
\begin{align}\label{eq.bla}
	- \nabla \cdot  ( w_{z^\ast}(1+\mathbf{b}) \nabla v) = \nabla \cdot \big( w_{z^\ast} \mathbf{b}  \nabla u_{z^\ast} \big) + \nabla \cdot (w_{z^\ast}(1+\mathbf{b}) \nabla \tilde u )+\nabla \cdot(w_{z^\ast}\widetilde{\mathbf{f}}).
\end{align}
Now let us define the expansions (in the variable $h$, and for any $m\in \N$) for the involved terms:
\begin{align}\label{e.expansion}
	\begin{split}
		v_m = & \, \widehat{u} - u_{z^\ast} - \sum_{1 \leq |\alpha| \leq m} v^{(\alpha)} h^\alpha, \quad \mathbf{b} = \sum_{1\leq |\alpha| \leq m} \mathcal{E}^{(\alpha)} h^\alpha + \chi_{(B_r(z^\ast))^c} o(|h|^m) \\
		\widetilde{\mathbf{f}} = & \sum_{1 \leq |\alpha| \leq m} \mathcal{F}^{(\alpha)} h^\alpha + E_{\mathbf f,m},
	\end{split}
\end{align}
where small-o notation $o(|h|^m)$ means that the remainder term may depend on $r$. The error term $E_{\mathbf f,m}$ satisfies for $q$ given in the statement of \cref{t.torus.reg},
\begin{align*}
	\|E_{\mathbf{f},m}\|_{L^q(\T^d;w_{z^\ast})} = o(|h|^m),
\end{align*}
which again may depend on $r$.
Furthermore, using the calculations below \cref{e.hatuminusu} and some elementary computations we obtain bounds for the coefficients in the expansions, namely 
\begin{align} \label{e.coeff.bdd}
	\begin{cases}
		|\mathcal{E}^{(\alpha)}|
		 & \lesssim \frac{1}{|x-z^\ast|^{|\alpha|}} \chi_{B_r^c(z^\ast)} \\
		\|\mathcal{F}^{(\alpha)}\|_{L^q(\T^d;w_{z^\ast})}
		 & \lesssim r^{-|\alpha|} {\displaystyle \sum_{0 \leq |\alpha_1 + \alpha_2| \leq |\alpha|}} \norm{(\partial^{\alpha_1}_x \partial^{\alpha_2}_z \mathbf{f})(\cdot;z^\ast)}_{L^q(\T^d;w_{z^\ast})},
	\end{cases}
\end{align}
and the implied constants are independent of $h$ and $r$.

As the expansion of $v_m$ above is formal, in the following we will define the functions $v^{(\alpha)}$ so that we will have $v_m = o(|h|^m)$ in $L^\infty$. 
Here we have, for the sake of simplicity in the notation, omitted the division by $\alpha!$ (Taylor expansions can be recovered simply by multiplying with $\alpha!$).
Note also that by \cref{e.bbdd}, we do not have a zero order term in the expansion of $\mathbf{b}$ or $\widetilde{\mathbf{f}}$.

Now, by using the definition of $v_m$, we get from \cref{eq.bla} that
\begin{multline}\label{e.master}
	- \nabla \cdot  ( w_{z^\ast}(1+\mathbf{b}) \nabla v_m) = \nabla \cdot \big( w_{z^\ast} \mathbf{b}  \nabla u_{z^\ast} \big) + \sum_{|\alpha| \leq m} \nabla \cdot \left (w_{z^\ast}(1+\mathbf{b}) \nabla v^{(\alpha)}   \right )h^\alpha \\
	+ \nabla \cdot (w_{z^\ast}\widetilde{\mathbf{f}}).
\end{multline}
Now we will define the functions $v^{(\alpha)}$ so that we have $v_m = o(|h|^m)$. To do this, we first expand the right-hand side of \cref{e.master} using the expansion in \cref{e.expansion}, leading to
\begin{multline}\label{e.bladibla}
	- \nabla \cdot  ( w_{z^\ast}(1+\mathbf{b}) \nabla v_m)
	=
	\nabla \cdot \big( w_{z^\ast} \sum_{1 \leq |\alpha|\leq m} \mathcal{E}^{(\alpha)} h^\alpha  \nabla u_{z^\ast} \big)
	\\
	+
	\sum_{|\alpha| \leq m} \nabla \cdot \left (\sum_{1 \leq |\widehat{\alpha}|\leq m } w_{z^\ast}(\mathcal{E}^{(\widehat{\alpha})} h^{\widehat{\alpha}}) \nabla v^{(\alpha)}   \right )h^\alpha
	+
	\sum_{|\alpha| \leq m} \nabla \cdot \left (w_{z^\ast} \nabla v^{(\alpha)}   \right )h^\alpha
	\\
	+
	\sum_{1 \leq |\alpha|\leq m} \nabla \cdot( w_{z^\ast} \mathcal{F}^{(\alpha)}) h^\alpha+o(|h|^{m}).
\end{multline}
Matching the powers of $h^\alpha$ in the right-hand side of the above we get that the functions $v^{(\alpha)}$ should solve 
\begin{multline}\label{e.valphaeq}
	-\nabla \cdot \left (w_{z^\ast} \nabla v^{(\alpha)} \right )
	=
	\nabla \cdot \big( w_{z^\ast} \mathcal{E}^{(\alpha)} \nabla u_{z^\ast} \big)
	+
	\sum_{
	\substack{1 \leq |\alpha_2|, |\alpha_1| \leq m, \\
			\alpha_1+\alpha_2 = \alpha}
	}
	\nabla \cdot \left ( w_{z^\ast} \mathcal{E}^{(\alpha_2)} \nabla v^{(\alpha_1)} \right )
	\\
	+
	\nabla \cdot( w_{z^\ast} \mathcal{F}^{(\alpha)})
\end{multline}
for $1 \leq |\alpha| \leq m$. 
We can now define the functions $v^{(\alpha)}$ inductively by solving \cref{e.valphaeq} in $\T^d$ (with periodic boundary conditions) for $1 \leq |\alpha| \leq m$, where we note that at each step the right-hand side has the correct regularity to ensure solvability. This follows from \cref{e.coeff.bdd}, i.e.~the expansion coefficients $\mathcal{E}^{(\alpha)}$ vanish in $B_r(z^\ast)$, and $\mathcal{F}^{(\alpha)} \in L^q(\T^d;w_{z^\ast})$. Now, inserting our definition of $v^{(\alpha)}$ into \cref{e.bladibla} we get
\begin{align*}
	- \nabla \cdot  ( w_{z^\ast}(1+\mathbf{b}) \nabla v_m) 
	= &
	\sum_{m+1 \leq |\alpha|} h^\alpha \nabla \cdot \big( w_{z^\ast}  \mathcal{E}^{(\alpha)}\nabla u_{z^\ast} \big)
	\\
	&+ \sum_{\substack{1\leq |\alpha| \leq m, 1 \leq |\widehat{\alpha}|\leq m, \\ m+1 \leq |\widehat{\alpha}
	+ \alpha|}} \nabla \cdot \left ( w_{z^\ast} \mathcal{E}^{(\widehat{\alpha})} \nabla v^{(\alpha)}   \right )h^{\widehat{\alpha} + \alpha}
	\\
	&+ \sum_{m+1 \leq |\alpha|} h^\alpha \nabla \cdot( w_{z^\ast} \mathcal{F}^{(\alpha)}).
\end{align*}
In order to show $v_m = o(|h|^m)$, it suffices to use \cref{l.holder} and note that the quantities $\mathcal{E}^{(\alpha)}\nabla u_{z^\ast}$, $\mathcal{E}^{(\widehat{\alpha})} \nabla v^{(\alpha)}$, and $\mathcal{F}^{(\alpha)}$ appearing on the right-hand side belong to $L^{2d-\eps}$, which follows from construction and the estimate \eqref{e.coeff.bdd} together with the assumption of the theorem.

\noindent \emph{Step 3. Local boundedness of the correctors:}
In this step we prove an $L^\infty$ estimate for $v^{(\alpha)}$. Recall that $v^{(\alpha)}$ for $1 \leq |\alpha| \leq m$ solves \cref{e.valphaeq}.
Here, as the right-hand side of \cref{e.valphaeq} is the divergence of a periodic function, we may represent $v^{(\alpha)}$ using the Green function. After integration by parts, we obtain
\begin{align*}
	v^{(\alpha)}(x^\ast) 
	=&
	-\int_{\T^d} \nabla_y G(x^\ast,y;z^\ast) \cdot \big( w_{z^\ast} \mathcal{E}^{(\alpha)} \nabla u_{z^\ast} \big) dy
	\\
	& -
	\sum_{
	\substack{1 \leq |\alpha_2|, |\alpha_1| \leq m, \\
			\alpha_1+\alpha_2 = \alpha}
	}
	\int_{\T^d} \nabla_y G(x^\ast,y;z^\ast) \cdot \left ( w_{z^\ast} \mathcal{E}^{(\alpha_2)} \nabla v^{(\alpha_1)} \right )dy
	\\
	& +
	\int_{\T^d} \nabla_y G(x^\ast,y;z^\ast) w_{z^\ast}(y) \mathcal{F}^{(\alpha)}(y)dy
	\\
	=& I_1 + I_2 + I_3.
\end{align*}
We estimate the terms one by one. For this, let us consider the rings\\ $A_j = (B_{2^{j} r}(x^\ast) \setminus B_{2^{j-1} r}(x^\ast))\setminus B_r(z^\ast)$ for $j=0,\ldots,j_0$, where $A_0 = B_{r}(x^\ast)\setminus B_r(z^\ast) = B_r(x^\ast)$, and $2^{j_0} r = C$\textcolor{black}{, and note that $\mathcal{E}^{(\alpha)}=0$ on the ball $B_r(z^\ast)$.} 
Using \cref{l.green,e.coeff.bdd} we have for $d > 2$ that
\begin{align} \label{e.corrector.I1.1}
	\notag I_1 & = \sum_{j=0}^{j_0} \int_{A_j} \nabla_y G(x^\ast,y;z^\ast) \cdot \big( w_{z^\ast} \mathcal{E}^{(\alpha)} \nabla u_{z^\ast} \big) dy
	\\
	\notag
	& \lesssim
	\sum_{j=0}^{j_0} \int_{A_j} \frac{(|x^\ast-z^\ast| \vee |y - z^\ast|)^{-\beta^2} |x^\ast-y|^{2-d} |y-z^\ast|^{\beta^2} |\mathcal{E}^{(\alpha)} \nabla u_{z^\ast}|}{(|x^\ast-y| \wedge |y-z^\ast|)} dy 
	\\
	& \lesssim
	r^{-|\alpha|} \int_{A_0} |x^\ast-y|^{1-d} |\nabla u_{z^\ast}| dy
	+
	\textcolor{black}{\sum_{j=1} (2^j r)^{1-|\alpha|} \fint_{A_j} |\nabla u_{z^\ast}| d\textcolor{black}{\nu}_{z^\ast},}
\end{align}
where we have also taken advantage of the fact that on the sets $A_j, j > 0$ we are at a distance \textcolor{black}{of the order} $2^j r$ away from the pole \textcolor{black}{$z^\ast$, and hence $\frac{\sup w_{z^\ast}}{\inf w_{z^\ast}} \lesssim C$}.

Let us begin by bounding the first term on the right, and for simplicity let us assume that $x^\ast = 0$. Alongside with \cref{l.holder}, we obtain
\begin{align} \label{e.corrector.I1.2}
	\notag \int_{A_0} |y|^{1-d} |\nabla u_{z^\ast}| dy & = \sum_{k} \int_{B_{2^{-k} r} \setminus B_{2^{-{k+1}} r}} |y|^{1-d} |\nabla u_{z^\ast}| dy
	\\
	\notag & \lesssim
	\sum_{k}
	\frac{|2^{-k} r|}{\abs{B_{2^{-k} r} \setminus B_{2^{-{k+1}} r}}}\int_{B_{2^{-k} r} \setminus B_{2^{-{k+1}} r}} |\nabla u_{z^\ast}| dy
	\\
	\notag & \lesssim
	\sum_{k} |2^{-k} r| \left ( \fint_{B_{2^{-k} r} \setminus B_{2^{-{k+1}} r}} |\nabla u_{z^\ast}|^2 d\textcolor{black}{\nu}_{z^\ast} \right )^{1/2}
	\\
	\notag & \lesssim \sum_{k} |2^{-k} r| (2^{-k} r)^{\gamma-1} \|\mathbf{f}(\cdot;z^\ast)\|_{L^q(\T^d;w_{z^\ast})} \\
	& \lesssim
	r^{\gamma} \|\mathbf{f}(\cdot;z^\ast)\|_{L^q(\T^d;w_{z^\ast})}.
\end{align}
\textcolor{black}{Above, we use the fact that $w_{z^\ast}$ is essentially constant on the annulus $B_{2^{-k} r} \setminus B_{2^{-{k+1}}r}$.}
Repeating a similar argument for the second term in \cref{e.corrector.I1.1} we get
\begin{align} \label{e.corrector.I1.3}
	\sum_{j=1} (2^j r)^{1-|\alpha|} \fint_{A_j} |\nabla u_{z^\ast}| w_{z^\ast} dy \lesssim r^{\gamma-\abs{\alpha}} \|\mathbf{f}(\cdot;z^\ast)\|_{L^q(\T^d;w_{z^\ast})}.
\end{align}
Assembling \cref{e.corrector.I1.1,e.corrector.I1.2,e.corrector.I1.3} finally gives
\begin{align}\label{e.corrector.I1.final}
	I_1 \lesssim r^{\gamma-|\alpha|} \|\mathbf{f}(\cdot;z^\ast)\|_{L^q(\T^d;w_{z^\ast})}.
\end{align}

Repeating a similar argument for $I_2$ we are led to estimate, for $j=j_0,j_0-1,\ldots$, terms
\begin{align*}
	\fint_{A_j} |\nabla v^{(\alpha)}| d\textcolor{black}{\nu}_{z^\ast}.
\end{align*}
Since $v^{(\alpha)}$ solves \cref{e.valphaeq} we get by testing with $v^{(\alpha)}$, Hölder's inequality, \cref{e.coeff.bdd}, and an induction argument that
\begin{align}\label{e.valpha.global}
	\int_{\T^d} |\nabla v^{(\alpha)}|^2 d\textcolor{black}{\nu}_{z^\ast}
	 & \lesssim
	\int_{\T^d} |\mathcal{E}^{(\alpha)}|^2|\nabla u_{z^\ast}|^2 d\textcolor{black}{\nu}_{z^\ast}
	 +
	\sum_{
	\substack{1 \leq |\alpha_2|, |\alpha_1| \leq m, \\
			\alpha_1+\alpha_2 = \alpha}
	}
	\int_{\T^d} |\mathcal{E}^{(\alpha_2)}|^2 |\nabla v^{(\alpha_1)}|^2 d\textcolor{black}{\nu}_{z^\ast}
	\nonumber                                       \\
	 & +
	\int_{\T^d} |\mathcal{F}^{(\alpha)}|^2 d\textcolor{black}{\nu}_{z^\ast}
	\nonumber                                       \\
	 & \lesssim
	r^{-2|\alpha|} 
	{\displaystyle \sum_{0 \leq |\alpha_1 + \alpha_2| \leq |\alpha|}} \norm{(\partial^{\alpha_1}_x \partial^{\alpha_2}_z \mathbf{f})(\cdot;z^\ast)}_{L^q(\T^d;w_{z^\ast})}^2
	.
\end{align}
Applying \cref{l.holder,l.holder2,e.valpha.global,e.coeff.bdd}, and again using an induction argument we get
\begin{align}\label{e.valpha.grad.bdd}
	\fint_{B_\varrho} |\nabla v^{(\alpha)}|^2 d\textcolor{black}{\nu}_{z^\ast}
	 & \lesssim
	\varrho^{2\gamma-2}
	r^{-2|\alpha|}{\displaystyle \sum_{0 \leq |\alpha_1 + \alpha_2| \leq |\alpha|}} \norm{(\partial^{\alpha_1}_x \partial^{\alpha_2}_z \mathbf{f})(\cdot;z^\ast)}_{L^q(\T^d;w_{z^\ast})}^2.
\end{align}
Using \cref{e.valpha.grad.bdd} we can bound $I_2$ similarly as the term $I_1$ and obtain
\begin{align*}
	I_2 \lesssim C(|\alpha|) r^{\gamma-|\alpha|} \left (\|\nabla u\|_{L^2(\T^d,w)} + 
	{\displaystyle \sum_{0 \leq |\alpha_1 + \alpha_2| \leq |\alpha|}} \norm{(\partial^{\alpha_1}_x \partial^{\alpha_2}_z \mathbf{f})(\cdot;z^\ast)}_{L^q(\T^d;w_{z^\ast})}^2 \right ).
\end{align*}
{\color{black} 
Finally, for $I_3$ it suffices to consider the integral over $A = B_{64r}(x^\ast)\setminus B_r(z^\ast)$. Outside $B_{64r}(x^\ast)$, $\Phi_{z^\ast}(x) = x$ and no prefactors of the form $r^{-|\alpha|}$ appears in the integrals, furthermore there is no singularity of the Greens function in this region. We would also like to point out that inside $B_r(z^\ast)$ we have $\Phi_{z^\ast}(x) = x+h$, and $\hat w / w = 1$ (see Step 2) and again no $r^{-|\alpha|}$ prefactor appears.
In order to estimate the integral over $A$, we note that for $q >2d-\varepsilon$, the conjugate $p=\frac{q}{q-1}$ satisfies $p<\frac{d}{d-1}$, provided $\varepsilon<d$, or equivalently, $(1-d)p+d>0$. Hence H\"older's inequality, \cref{e.coeff.bdd} for $\mathcal{F}^{(\alpha)}$, and the pointwise bounds for $\nabla_y G$ in \cref{l.green} as in the case $I_1$ gives 
\begin{align*}
    &\int_{A} \nabla_y G(x^\ast,y;z^\ast) w_{z^\ast}(y) \mathcal{F}^{(\alpha)}(y)dy \\
    &\leq \|\mathcal{F}^{(\alpha)}\|_{L^q(\T^d;w_{z^\ast})}\left[\int_{A}|\nabla_y G(x^\ast,y;z^\ast)|^p w_{z^\ast}(y)dy \right]^{1/p} \\
    &\lesssim r^{-|\alpha|} {\displaystyle \sum_{0 \leq |\alpha_1 + \alpha_2| \leq |\alpha|}} \norm{(\partial^{\alpha_1}_x \partial^{\alpha_2}_z \mathbf{f})(\cdot;z^\ast)}_{L^q(\T^d;w_{z^\ast})}\left[\int_{A}|\nabla_y G(x^\ast,y;z^\ast)|^p w_{z^\ast}(y)dy \right]^{1/p},
\end{align*}
where
\begin{align*}
    \left[\int_{A}|\nabla_y G(x^\ast,y;z^\ast)|^p w_{z^\ast}(y)dy \right]^{1/p} \lesssim \left[\int_{B_r(x^\ast)} |y-x^\ast|^{(1-d)p}dy\right]^{1/p}
    \lesssim r^{1-d+d/p}
\end{align*}
that leads to 
\begin{align*}
I_3 \lesssim r^{\gamma-|\alpha|}{\displaystyle \sum_{0 \leq |\alpha_1 + \alpha_2| \leq |\alpha|}} \norm{(\partial^{\alpha_1}_x \partial^{\alpha_2}_z \mathbf{f})(\cdot;z^\ast)}_{L^q(\T^d;w_{z^\ast})}.
\end{align*}
}
Putting all this together, we finally arrive at the estimate
\begin{align*}
	|v^{(\alpha)}(x^\ast)| \lesssim C(|\alpha|) r^{\gamma-|\alpha|} 
	{\displaystyle \sum_{0 \leq |\alpha_1 + \alpha_2| \leq |\alpha|}} \norm{(\partial^{\alpha_1}_x \partial^{\alpha_2}_z \mathbf{f})(\cdot;z^\ast)}_{L^q(\T^d;w_{z^\ast})}.
\end{align*}
This gives the required estimate in the case $d>2$. For the case $d=2$, by using the arguments above we get the same estimates as in \cref{e.corrector.I1.1,e.corrector.I1.2,e.corrector.I1.3,e.valpha.grad.bdd} except with an extra $\log(1/r)$ factor. As such, we recover the same type of bound, namely
\begin{align*}
	|v^{(\alpha)}(x^\ast)| \lesssim C(|\alpha|) r^{\widehat{\gamma}-|\alpha|} 
	{\displaystyle \sum_{0 \leq |\alpha_1 + \alpha_2| \leq |\alpha|}} \norm{(\partial^{\alpha_1}_x \partial^{\alpha_2}_z \mathbf{f})(\cdot;z^\ast)}_{L^q(\T^d;w_{z^\ast})},
\end{align*}
where now $\widehat{\gamma} < \gamma$.

\vspace{1cm}

\noindent \emph{Step 4. Concluding the proof: }

Putting together all the steps, we have proved that for $z \in \T^d \setminus B_{4r}(x)$, $r= |x-z|/16$ and $|h| < r/2$, it holds
\begin{align*}
	u(x;z) = u(x;z+h) + \sum_{1 \leq |\alpha| \leq m} v^{(\alpha)}_r(x;z) h^{\alpha} + o(|h|^m).
\end{align*}
This implies
\begin{align}\label{e.final}
	\notag
	|\partial_z^{\alpha} u(x;z)| 
	=& 
	|v^{(\alpha)}_r(x;z)|\alpha! 
	\\
	\lesssim &
	{\displaystyle \sum_{0 \leq |\alpha_1 + \alpha_2| \leq |\alpha|}} \norm{(\partial^{\alpha_1}_x \partial^{\alpha_2}_z \mathbf{f})(\cdot;z)}_{L^q(\T^d;w_{z})} |x-z|^{\gamma-|\alpha|}.
\end{align}
To complete the proof, note that if $d$ is even we get immediately from \cref{e.final} that
\begin{align*}
	\|u(x;\cdot) \|_{H^{d/2}(\T^d)} \lesssim 
	{\displaystyle \sum_{0 \leq |\alpha_1 + \alpha_2| \leq d/2}} \left (\int_{\T^d} \norm{\partial^{\alpha_1}_x \partial^{\alpha_2}_z \mathbf{f}(\cdot; z)}_{\T^d;L^q(\T^d;w_{z})}^{p} dz \right )^{1/p}
\end{align*}
for $p > \frac{d}{\gamma}$, and the implied constant does not depend on $x$.

In the case that $d$ is odd, we can use the Brezis-Mironescu~\cite{brezis2018gagliardo} interpolation theorem to estimate
\begin{equation} \label{e.interpolation}
	\|u(x,\cdot)\|_{H^{\frac{d}{2}}} \leq C \|u(x,\cdot)\|_{W_{\frac{d-1}{2},\frac{2d}{d-1}}}^{\frac{1}{2}} \|u(x,\cdot)\|_{W_{\frac{d+1}{2},\frac{2d}{d+1}}}^{\frac{1}{2}}.
\end{equation}
To bound the terms on the right hand side of the above, we integrate \cref{e.final} w.r.t $z$ to get that
\begin{equation*}
	\norm{u(x,\cdot)}_{W^{\frac{d+1}{2},\frac{2d}{d+1}}(\T^d)}^{\frac{2d}{d+1}} \lesssim \sum_{0 \leq |\alpha_1 + \alpha_2| \leq \frac{d+1}{2}} \int_{\T^d} \norm{\partial^{\alpha_1}_x \partial^{\alpha_2}_z \mathbf{f}(\cdot;z)}_{L^{q}(\T^d;w_{z})}^{\frac{2d}{d+1}} |x-z|^{\left (\gamma-\frac{d+1}{2} \right )\frac{2d}{d+1}} dz.
\end{equation*}
Now using Hölder's inequality we get that for $p_+ > \frac{d+1}{2\gamma}$ it holds
\begin{equation*}
	\norm{u(x,\cdot)}_{W^{\frac{d+1}{2},\frac{2d}{d+1}}(\T^d)}^{p_+ \frac{2d}{d+1}} \lesssim \sum_{0 \leq |\alpha_1 + \alpha_2| \leq \frac{d+1}{2}} \int_{\T^d} \norm{\partial^{\alpha_1}_x \partial^{\alpha_2}_z \mathbf{f}(\cdot;z)}_{L^{q}(\T^d;w_{z})}^{p_+ \frac{2d}{d+1}} dz,
\end{equation*}
where the implied constant is independent of $x$. This bounds the first term on the right hand side of \cref{e.interpolation}.
Similar calculations give that for $p_- > \frac{d-1}{2\gamma}$ it holds
\begin{equation*}
	\norm{u(x,\cdot)}_{W^{\frac{d-1}{2},\frac{2d}{d-1}}(\T^d)}^{p_- \frac{2d}{d-1}} \lesssim \sum_{0 \leq |\alpha_1 + \alpha_2| \leq \frac{d-1}{2}} \int_{\T^d} \norm{\partial^{\alpha_1}_x \partial^{\alpha_2}_z \mathbf{f}(\cdot;z)}_{L^{q}(\T^d;w_{z})}^{p_- \frac{2d}{d-1}} dz,
\end{equation*}
where again the implied constant is independent of $x$. This bounds the second term on the right hand side of \cref{e.interpolation}. 
Finally, if 
\begin{equation*}
	\sum_{0 \leq |\alpha_1 + \alpha_2| \leq \frac{d+1}{2}} \int_{\T^d} \norm{\partial^{\alpha_1}_x \partial^{\alpha_2}_z \mathbf{f}(\cdot;z)}_{L^{q}(\T^d;w_{z})}^{p} dz < \infty,
\end{equation*}
for some $p > \frac{d}{\gamma}$, then the right hand side of \cref{e.interpolation} is finite. This concludes the proof of \cref{t.torus.reg}.

\appendix

\section{Muckenhoupt weights} \label{a.muckenhoupt}

\begin{definition}
	If $0<w<\infty$ a.e., we say that $w\in A_1$ (i.e., belongs to the \emph{Muckenhoupt class $A_1$}) if there exists
	a constant $c(1,w)\ge 1$ such that
	\begin{align*}
		\fint_B w\,dx\le c(1,w)\operatorname{ess\,inf}\displaylimits_B w
	\end{align*}
	for every ball $B\in\R^d$.

	Furthermore, we say that $w\in A_p$, $1<p<\infty$ if there exists a constant $c(p,w)>0$ such that
	\begin{align*}
		\fint_B w\,dx\le c(p,w)\left(\fint_B w^{1/(1-p)}\,dx\right)^{1-p}
	\end{align*}
	for every ball $B\in\R^d$.
\end{definition}

We have that $A_1\subset A_p$. If $w\in A_p$ and $h\in\R^d$, $a\in\R\setminus\{0\}$, then $w(\cdot+h)\in A_p$ and $w(a\cdot)\in A_p$.

\begin{example}\label{ex:Muckenhoupt}
	For $p>1$, We have that $|\cdot|^\delta\in A_p$ if and only if $-d<\delta<d(p-1)$. Also, we have that $|\cdot|^\delta\in A_1$ if and only if $-d<\delta\le 0$. We refer to~\cite[Chapter IX, Proposition 3.2, Corollary 4.4]{Torchinsky1986} for the well-known proofs.
\end{example}

\begin{lemma}\label{lem:Muckenhoupt}
	Let $U\subset\R^d$ be an open bounded domain. Let $\beta\in (0,\sqrt{d})$ and let
	\begin{align*}
		R(x\,;z)
		=
		e^{g(x,z)}|x{-}z|^{-\beta^2 \phi(x) \phi(z)},\quad x,z\in\R^d,
	\end{align*}
	where
	$g\in C_b(\R^d\times\R^d)$, and where $\phi$ is a compactly supported smooth function such that $\chi_{U'} \leq \phi \leq \chi_{U}$ with $U' := \{x \in U \, : \, \dist(x,\partial U)>r_0\}$.

	Then $R(\cdot\,;z)\in A_1$ for any $z\in\R^d$.
\end{lemma}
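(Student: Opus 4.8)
The plan is to fix $z\in\R^d$ and reduce the statement to a \emph{truncated} pure power weight. Since $g\in C_b(\R^d\times\R^d)$, setting $\|g\|_\infty:=\sup_{\R^d\times\R^d}|g|<\infty$ gives $e^{-\|g\|_\infty}\le e^{g(\cdot,z)}\le e^{\|g\|_\infty}$, so that $R(\cdot\,;z)$ equals, up to multiplication by a function bounded above and below by positive constants, the weight $v(x):=|x-z|^{-\beta^2\phi(x)\phi(z)}$. Multiplying a weight by such a factor preserves membership in $A_1$ (immediate from the definition, the $A_1$ constant changing only by a fixed factor), so it suffices to prove $v\in A_1$. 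If $\phi(z)=0$ this is trivial, since then $v\equiv1$; hence I may assume $\phi(z)\in(0,1]$, which forces $z\in\supp\phi$, a compact set, and I write $D:=\diam\supp\phi<\infty$.

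Next I would introduce the comparison weight $w_0(x):=\min(|x-z|,1)^{-a}=\max(|x-z|^{-a},1)$ with $a:=\beta^2\phi(z)^2$, and note $0<a\le\beta^2<d$ (this is precisely where $\beta<\sqrt d$ enters). To see $w_0\in A_1$: the elementary bound $\tfrac12(t^{-a}+1)\le\max(t^{-a},1)\le t^{-a}+1$ shows $w_0$ is comparable to $|x-z|^{-a}+1$; by Example~\ref{ex:Muckenhoupt} together with translation invariance one has $|x-z|^{-a}\in A_1$ (since $-d<-a<0$), the constant $1$ lies trivially in $A_1$, and a sum $u_1+u_2$ of two $A_1$ weights is again in $A_1$ because $\fint_B(u_1+u_2)\le c_1\operatorname*{ess\,inf}_B u_1+c_2\operatorname*{ess\,inf}_B u_2\le\max(c_1,c_2)\operatorname*{ess\,inf}_B(u_1+u_2)$. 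Hence $w_0\in A_1$.

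The heart of the argument is to show $v\approx w_0$ on all of $\R^d$, using that $\phi$, being smooth with compact support, is globally Lipschitz with constant $L:=\|\nabla\phi\|_{L^\infty}$. For $0<|x-z|\le1$ I factor $v(x)=|x-z|^{-a}\cdot|x-z|^{-\beta^2\phi(z)(\phi(x)-\phi(z))}$; since $|\phi(x)-\phi(z)|\le L|x-z|$, $\phi(z)\le1$ and $\sup_{0<t\le1}t|\log t|=1/e$, the exponent of the second factor is at most $\beta^2 L\,|x-z|\,|\log|x-z||\le\beta^2 L/e$ in absolute value, so that factor lies in $[e^{-\beta^2 L/e},e^{\beta^2 L/e}]$ and $v(x)\approx|x-z|^{-a}=w_0(x)$. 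For $|x-z|>1$ we have $w_0(x)=1$; if $x\notin\supp\phi$ then $\phi(x)=0$ and $v(x)=1$, while if $x\in\supp\phi$ then $|x-z|\le D$ and the exponent $-\beta^2\phi(x)\phi(z)$ lies in $[-\beta^2,0]$, whence $v(x)\in[(D\vee1)^{-\beta^2},1]$; in either case $v(x)\approx1=w_0(x)$. Combining the two regimes yields constants $0<c\le C$ with $c\,w_0\le v\le C\,w_0$ on $\R^d$, so $v\in A_1$, and therefore $R(\cdot\,;z)=e^{g(\cdot,z)}v\in A_1$.

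The only genuinely delicate point I expect is the behaviour of the variable exponent near the pole: one must recognise that the true order of the singularity of $v$ at $z$ is $a=\beta^2\phi(z)^2$ (which is what keeps it strictly below $d$), and that the $O(|x-z|)$ deviation of $\phi(x)$ from $\phi(z)$ perturbs the weight only by a multiplicative factor bounded above and below, thanks to the boundedness of $t\mapsto t|\log t|$ near $0$. A secondary subtlety is that one should compare $v$ with the \emph{truncated} power $\min(|x-z|,1)^{-a}$ rather than $|x-z|^{-a}$: a pure negative power decays at infinity, whereas $v$ stays bounded above and below away from $z$, so only the truncated version is globally comparable to $v$.
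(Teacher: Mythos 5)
Your proof is correct, and it in fact takes a more careful route than the paper's. Let me spell out the comparison.

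The paper factors $R(\cdot\,;z)=F(\cdot)\,|x-z|^{-\beta^2\phi(z)^2}$ with
$F(x)=\exp\bigl(g(x,z)-\beta^2(\phi(x)-\phi(z))\phi(z)\log|x-z|\bigr)$, cites Example~\ref{ex:Muckenhoupt} for the pure power weight, and then argues via the mean value theorem that $F$ is bounded above and below away from zero (so that $R(\cdot\,;z)$ is a bounded-above-and-below multiple of an $A_1$ weight). This is the same Lipschitz observation you use near the pole, but the paper applies it globally. As written, the step asserting that $|x-z|\,|\log|x-z||\,\chi_U(\theta x+(1-\theta)z)$ is bounded does not hold: for $z$ with $\phi(z)>0$ and $|x-z|\to\infty$ one has $\phi(x)=0$, so
\begin{align*}
F(x)=e^{g(x,z)}\,|x-z|^{\beta^2\phi(z)^2}\longrightarrow\infty,
\end{align*}
and indeed the mean value point $\theta x+(1-\theta)z$ can remain in $U$ while $|x-z|$ is arbitrarily large. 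The underlying reason is that the pure power $|x-z|^{-\beta^2\phi(z)^2}$ decays at infinity whereas $R(\cdot\,;z)$ stays bounded away from zero, so the compensating factor $F$ cannot be bounded.

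Your truncated comparison weight $w_0(x)=\min(|x-z|,1)^{-a}$ with $a=\beta^2\phi(z)^2$ avoids exactly this: it reproduces the genuine singularity at $z$ and is constant away from it, which is the correct global profile of $R(\cdot\,;z)$. Your verification that $w_0\in A_1$ is sound: $w_0\approx|x-z|^{-a}+1$, both summands are $A_1$ by Example~\ref{ex:Muckenhoupt}, and your computation
\begin{align*}
\fint_B(u_1+u_2)\le c_1\operatorname*{ess\,inf}_B u_1+c_2\operatorname*{ess\,inf}_B u_2\le\max(c_1,c_2)\operatorname*{ess\,inf}_B(u_1+u_2)
\end{align*}
shows the sum is $A_1$. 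The near-pole estimate (Lipschitz bound on $\phi$ plus $\sup_{0<t\le1}t|\log t|=1/e$) and the far-field estimate (exponent in $[-\beta^2,0]$ with $1<|x-z|\le D$) together give $v\approx w_0$ globally. This is a correct and complete argument; it uses the same key mechanism as the paper's proof near the singularity, but handles the far field properly where the paper's argument breaks down.
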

\begin{proof}
	Expand $R$ as follows,
	\begin{align*}
		R(x\,;z)=\exp(g(x,z)-\beta^2(\phi(x)-\phi(z))\phi(z)\log|x{-}z|-\beta^2\phi^2(z)\log|x{-}z|).
	\end{align*}
	Clearly, by translation invariance and Example~\ref{ex:Muckenhoupt}, $x\mapsto\exp(-\beta^2\phi^2(z)\log|x{-}z|)$ is an element of $A_1$ for any $z$.
	By the Coifman-Rochberg characterization of $A_1$-weights~\cite[Chapter IX, Theorem 3.4]{Torchinsky1986}, it suffices to prove that
	\begin{equation}\label{eq:weightfactor}
		x\mapsto\exp(g(x,z)-\beta^2(\phi(x)-\phi(z))\phi(z)\log|x{-}z|)
	\end{equation}
	is bounded above and below away from zero. By the mean value theorem,
	there exists $\theta=\theta_{x,z}\in (0,1)$, such that
	\begin{align*}
		&|g(x,z)-\beta^2(\phi(x)-\phi(z))\phi(z)\log|x{-}z||\\
		\le&\|g\|_\infty +\beta^2\|\phi\|_\infty |\nabla\phi(\theta x+(1-\theta)z)||x{-}z||\log|x{-}z||\\
		\le&\|g\|_\infty +\begin{cases}\beta^2\operatorname{Lip}(\phi)|x{-}z||\log|x{-}z||,&\quad\text{if}\quad\theta x+(1-\theta)z\in U,\\
		0,&\quad\text{if}\quad\theta x+(1-\theta)z\in\R^d\setminus U.\end{cases}
	\end{align*}
	However, $|x{-}z||\log|x{-}z|| \chi_U(\theta x+(1-\theta)z)$ is bounded as is $U$. Hence the factor~\eqref{eq:weightfactor} is bounded from above and below away from zero.
\end{proof}

\begin{corollary}
	Let $\eta\in C_0^\infty(\R^d)$, $\eta\ge 0$, $\int_{\R^d}\eta(x)\,dx=1$. Then
	the weight
	\begin{align*}
		x\mapsto (R(\cdot\,;z)\ast \eta)(x):=\int_{\R^d}R(x{-}y\,;z)\eta(y)\,dy
	\end{align*}
	belongs to the class $A_1$ for any $z$.
\end{corollary}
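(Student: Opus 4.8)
The plan is to deduce this from Lemma~\ref{lem:Muckenhoupt} together with the Hardy--Littlewood maximal characterisation of the class $A_1$. Recall that a nonnegative, locally integrable $w$ belongs to $A_1$ with constant $c(1,w)$ if and only if $Mw\le c(1,w)\,w$ almost everywhere, where $M$ is the (uncentered) maximal operator over Euclidean balls; this equivalence, valid up to dimensional constants, is classical and underlies the Coifman--Rochberg description already invoked above, see \cite[Chapter IX]{Torchinsky1986}. So fix $z$, write $w:=R(\cdot\,;z)$, and note that by Lemma~\ref{lem:Muckenhoupt} this is a nonnegative function in $A_1$, and it is locally integrable (the pole contributes only an integrable singularity $|x-z|^{-\beta^2\phi(z)^2}$ with exponent strictly below $d$), so that $w\ast\eta$ is defined and finite at every point.

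The only substantive step is the elementary ``commutation'' bound
\begin{align*}
	M(w\ast\eta)\le (Mw)\ast\eta \qquad\text{on }\R^d.
\end{align*}
To see it, I would fix a ball $B\ni x$ and use Tonelli's theorem (all integrands are nonnegative, $w\in L^1_{\mathrm{loc}}$, $\eta\in C_0^\infty$) to write $\fint_B (w\ast\eta)=\int_{\R^d}\eta(y)\,\fint_{B-y}w\,dy$; since $x\in B$ forces $x-y\in B-y$, each inner average is bounded by $Mw(x-y)$, and taking the supremum over all balls $B\ni x$ gives the inequality. Combining it with $Mw\le c(1,w)\,w$ a.e.\ and $\eta\ge 0$ yields $M(w\ast\eta)\le (Mw)\ast\eta\le c(1,w)\,(w\ast\eta)$ a.e., which is exactly the $A_1$ condition for $w\ast\eta$, with the same constant. (Here only $\eta\ge 0$ is used; the normalisation $\int\eta=1$ is not needed.)

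The proof is essentially routine, so there is no real obstacle: the mild care required is the justification of Tonelli's theorem in the displayed identity and the citation of the standard equivalence between $A_1$ and $Mw\lesssim w$. As an even more hands-on alternative, one may avoid the maximal function entirely by observing that $w\ast\eta$ is comparable to a constant. Indeed, since $\phi$ is compactly supported, $w(x)=e^{g(x,z)}\in[e^{-\|g\|_\infty},e^{\|g\|_\infty}]$ whenever $x\notin\operatorname{supp}\phi$, so $w\ast\eta$ takes values in this fixed range outside a bounded set; on the remaining bounded set, $w\ast\eta$ is continuous (a convolution of an $L^1_{\mathrm{loc}}$ function with a $C_0^\infty$ kernel) and strictly positive, hence bounded above and away from zero there too. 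Any weight bounded above and below by positive constants trivially lies in $A_1$, which again proves the corollary.
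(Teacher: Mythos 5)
Your argument is correct, but it takes a genuinely different route from the paper. The paper's proof is one line: by Lemma~\ref{lem:Muckenhoupt}, $R(\cdot\,;z)\in A_1$, and then it appeals to~\cite[Lemma 2.1]{Kinnunen1998}, a general result that $A_1$ is stable under convolution with a nonnegative mollifier. Your first argument, via $M(w\ast\eta)\le (Mw)\ast\eta\le c(1,w)\,(w\ast\eta)$, essentially \emph{reproves} that lemma from scratch; the Tonelli step is justified since $w\in L^1_{\mathrm{loc}}$ (the pole has exponent $\beta^2\phi(z)^2<d$) and $\eta\in C_0^\infty$, and it has the virtue of preserving the $A_1$ constant $c(1,w)$. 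Your second, ``hands-on'' alternative is a genuine shortcut that bypasses both Kinnunen's lemma and Lemma~\ref{lem:Muckenhoupt} entirely: since $\phi$ has compact support, $w=e^{g(\cdot,z)}$ (hence bounded above and below) outside $\operatorname{supp}\phi$, so $w\ast\eta$ agrees with such averages outside the compact set $\operatorname{supp}\phi+\operatorname{supp}\eta$; on that compact set, $w\ast\eta$ is $C^\infty$ (mollification of an $L^1_{\mathrm{loc}}$ function) and strictly positive, hence bounded above and away from zero there as well, and a globally two-sided bounded weight trivially lies in $A_1$. This is shorter and more elementary, at the cost of an $A_1$ constant that depends on $\|g\|_\infty$, $\beta$, $d$, and the geometry of $\operatorname{supp}\phi$ and $\operatorname{supp}\eta$ rather than on $c(1,w)$ alone. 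Both routes are valid; the paper's citation is the most economical, your first route makes the underlying mechanism explicit, and your second exploits structure specific to this weight that the general lemma does not see.
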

\begin{proof}
	By Lemma~\ref{lem:Muckenhoupt}, we may apply~\cite[Lemma 2.1]{Kinnunen1998}.
\end{proof}

\bibliographystyle{abbrv}
\bibliography{geothermal}

\end{document}